\documentclass{article}

\usepackage[english]{babel}
\usepackage{amsthm}
\usepackage{amsmath}
\usepackage{amssymb}

\usepackage{graphicx}
\usepackage{enumitem}
\usepackage[colorlinks]{hyperref}
\usepackage{caption}
\usepackage{subcaption}
\usepackage{pict2e,color,colordvi,amscd}
\usepackage{indentfirst}
\usepackage{tikz}
\usepackage{float}

\newtheorem{definition}{Definition}[section]
\newtheorem{theorem}[definition]{Theorem}
\newtheorem{lemma}[definition]{Lemma}
\newtheorem{claim}{Claim}[section]
\newtheorem{corollary}[definition]{Corollary}

\newtheorem{conjecture}{Conjecture}

\newtheorem{observation}{Observation}

\title{Average degrees of edge-$\Delta$-critical multigraphs}
\date{\relax}
\author{
Guantao Chen\thanks{
Department of Mathematics and Statistics, Georgia State University, Atlanta, GA 30303.
Email addresses: \texttt{\{gchen, yma29, ysu12, swang85\}@gsu.edu}.}
\and
Yuying Ma\footnotemark[1]
\and
Yimo Su\footnotemark[1]
\and
Shengze Wang\footnotemark[1]
}

\begin{document}
\maketitle
\begin{abstract}
Let $G$ be a loopless multigraph with maximum degree $\Delta(G)$, average degree $\overline{d}(G)$, density $\Gamma(G)$, and chromatic index $\chi'(G)$. 
A multigraph $G$ is called edge-$\Delta$-critical if $\Delta(G)=\Delta$, $\chi'(G)=\Delta(G)+1$ and $\chi'(H) \le \Delta(G)$ for every proper subgraph $H\subset G$. Vizing conjectured that if $G$ is an edge-$\Delta$-critical simple graph on $n$ vertices, then $\overline{d}(G) \ge \Delta-1+\tfrac{3}{n}$.
Motivated by this, we conjecture that every edge-$\Delta$-critical multigraph $G$ satisfies $\overline{d}(G) \ge \tfrac{2\Delta+2}{3}$, which is best possible. 
We first give a general lower bound in this direction. For any such graph $G$, 
\[
\overline{d}(G) \ge 
\begin{cases}
\frac{\sqrt{17}-3}{2}(\Delta+1) & \text{if } \Delta \le 112;\\[4pt]
\frac{\Delta+\sqrt{2\Delta-1}}{2} & \text{if } \Delta \ge 113.
\end{cases}
\]
This bound can be further improved under an additional condition on the multiplicity $\mu$. In this case,
\[
\overline{d}(G)\ge 
\min\left\{
\frac{2\mu\Delta+2\mu(2\mu-1)}{4\mu-1},\;
\frac{\sqrt{17}-3}{2}(\Delta+1)
\right\}.
\]
We also confirm the conjecture for $\Delta \in \{2,3,4,5,6,7,8\}$. As a consequence, Goldberg's conjecture~\cite{Goldberg1984} holds for $\Delta(G)\in\{2,3,4,5\}$, that is, every multigraph $G$ with $\chi'(G)\ge \Delta(G)+1$ satisfies $\Gamma(G)\ge \Delta(G)$.

\vspace{0.3cm}
\textit{Keywords:} Edge-coloring, Edge-chromatic critical graph, Average degree, Density.
\end{abstract}

\section{Introduction}\label{sec:intro}

All graphs considered in this paper are finite, undirected, and loopless, and may contain parallel edges. We will state explicitly when a graph is simple. For $k \in \mathbb{N}$, we define $[k]:= \{1,2,\dots, k\}$. Let $G$ be a graph with vertex set $V(G)$ and edge set $E(G)$. We denote its multiplicity, minimum degree, average degree, and maximum degree by $\mu(G)$, $\delta(G)$, $\overline{d}(G)$, and $\Delta(G)$, respectively.

A (proper) $k$-edge-coloring of a graph $G$ is a mapping $\varphi: E(G) \to [k]$ such that the preimage $\varphi^{-1}(i)$ is a matching (that is, a set of edges no two of which share a common vertex) for each $i \in [k]$. Each set $\varphi^{-1}(i)$ is called a \emph{color class} of $\varphi$. Thus a $k$-edge-coloring of $G$ partitions $E(G)$ into $k$ matchings. Let $\mathcal{C}^k(G)$ denote the set of all $k$-edge-colorings of $G$. The \emph{chromatic index} $\chi'(G)$ is the minimum integer $k$ such that $\mathcal{C}^k(G) \ne \emptyset$. 
A graph $G$ is called \emph{critical} if $\chi'(H)<\chi'(G)$ for every proper subgraph $H \subset G$, and \emph{(edge-)$k$-critical} if it is critical and $\chi'(G) = k+1$. When $k=\Delta(G)=\Delta$, $G$ is called \emph{(edge-)$\Delta$-critical}. By definition, $\chi'(G)\ge \Delta(G)$. Shannon~\cite{Shannon1949} proved that $\chi'(G) \le \lfloor \tfrac 32 \Delta(G) \rfloor$, and Vizing~\cite{vizing1964} showed that $\chi'(G) \le \Delta(G)+\mu(G)$. In particular, for simple graphs, $\chi'(G)\in \{\Delta(G),\,\Delta(G)+1\}$. 

It is well known that every simple graph with $\chi'(G)=\Delta(G)+1$ contains a $\Delta$-critical subgraph with the same maximum degree. The structure of $\Delta$-critical simple graphs has drawn much attention. Vizing~\cite{vizing1968} conjectured the following for such graphs, known as Vizing's Average Degree Conjecture.
\begin{conjecture}[Vizing~\cite{vizing1968}]
    Every $\Delta$-critical simple graph $G$ satisfies $\overline{d}(G) \ge \Delta-1+\frac{3}{|G|}$.
\end{conjecture} 
A number of results improve lower bounds, but the conjectured bound remains open (see~\cite{CC2021299, CCJLL20191613, woodall2007average}).

A natural question is whether a similar result holds for $\Delta$-critical graphs. Motivated by this, we propose the following conjecture.

\begin{conjecture}\label{conj:avg-degree}
    Every $\Delta$-critical graph $G$ with $\Delta \ge 2$ satisfies $\overline{d}(G) \ge \frac{2\Delta+2}{3}$.
\end{conjecture}
This bound is best possible.
Indeed, consider the graph obtained from $K_3$ by replacing two edges with $k$ parallel edges each, where $k \ge 1$. In this graph, we have $\Delta (G) = 2k$ and $\chi'(G) = 2k + 1$. Also, removing any edge will decrease the chromatic index. Hence $G$ is $\Delta$-critical, and its average degree is $\overline{d}(G) = \frac{2(2k+1)}{3} = \frac{2\Delta(G)+2}{3}$, which exactly matches the bound.

In this paper, we obtain two main results. The first gives a general lower bound on the average degree of $\Delta$-critical graphs, and a stronger bound under an additional condition on the multiplicity. 
For the latter, we introduce and apply a generalized version of Woodall's adjacency lemma~\cite{woodall2007average}.
\begin{theorem}\label{THM-General}
    If $G$ is a $\Delta$-critical graph, then
    $\overline{d}(G)\geq \min\left\{\frac{\Delta+\sqrt{2\Delta-1}}{2}, \frac{\sqrt{17}-3}{2}(\Delta+1)\right\}$, that is,
    \[
    \overline{d}(G) \ge 
    \begin{cases}
    \frac{\sqrt{17}-3}{2}(\Delta+1) & \text{if } \Delta \le 112;\\[4pt]
    \frac{\Delta+\sqrt{2\Delta-1}}{2} & \text{if } \Delta \ge 113.
    \end{cases}
    \]
    Moreover, if its multiplicity satisfies
    $$
    \mu(G)\le
    \max\left\{
    \frac{1+\sqrt{2\Delta-1}}4,\,
    \frac{1}{2}+\frac{\sqrt{2}}{4}\sqrt{(5-\sqrt{17})\Delta+3-\sqrt{17}}\right\},
    $$
    then
    $$
    \overline{d}(G)\ge \min\left\{\frac{2\mu\Delta+2\mu(2\mu-1)}{4\mu-1}, \frac{\sqrt{17}-3}{2}(\Delta+1)\right\}.
    $$
\end{theorem}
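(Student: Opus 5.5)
We will prove Theorem~\ref{THM-General} by discharging. Each vertex starts with charge $d(v)$ and we move charge from high-degree to low-degree vertices so that every vertex finishes with at least the target bound $b$. Then $\sum_v d(v)\ge b|V(G)|$, which gives $\overline{d}(G)\ge b$.

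\textbf{Adjacency lemmas.} The ingredients are the standard local facts for $\Delta$-critical multigraphs, all obtained from Vizing fans and Kempe chains.
\begin{enumerate}[label=(\roman*)]
\item For an edge $e=xy$ with multiplicity $\mu(x,y)$, the vertex $x$ has at least $\Delta-d(y)+\mu(x,y)$ neighbours $z\neq y$ with $d(z)=\Delta$. More generally, $\sum_{z\in N(x)\setminus\{y\}}\bigl(d(z)+\mu(x,z)-\Delta\bigr)\ge \Delta-d(y)+\mu(x,y)$. This is the multigraph Vizing Adjacency Lemma, from fan recolouring.
\item There is a multigraph generalisation of Woodall's adjacency lemma, which the paper introduces and states. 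It bounds, for a vertex $y$ adjacent to a low-degree vertex $x$, the number of neighbours of $y$ of degree at least $2\Delta-d(x)-\mu$ (or similar), via multi-fans and Kempe chain arguments.
\end{enumerate}
Since the theorem sits in the introduction, I will assume these lemmas are proved later in the paper and used here as black boxes.

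\textbf{The general bound.} Call $v$ \emph{small} if $d(v)<b$. Each small vertex $x$ needs charge $b-d(x)$.
\begin{itemize}
\item By Vizing-type counting, every neighbour $y$ of a small $x$ has $d(y)+\mu(x,y)\ge \Delta+2-d(x)+\ldots$. In particular $d(x)+d(y)\ge \Delta+2$, and the edge-weighted sum in (i) forces $x$ to have about $\Delta-d(x)$ "excess" among its neighbours.
\item Rule: each big vertex $y$ sends charge $\frac{b-d(x)}{\,\cdot\,}$ to each small neighbour $x$, proportional to the multiplicity $\mu(x,y)$. For example, it sends $\mu(x,y)\frac{b-d(x)}{d(x)}$, so a small $x$ receives exactly $b-d(x)$.
\item The hard part is verifying that a big vertex $y$ of degree $d(y)\ge b$ does not drop below $b$. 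By (i), $y$ has many $\Delta$-neighbours that are not small. This limits the total multiplicity toward small neighbours: $y$ can have at most roughly $d(y)-(\Delta-d(x_{\min})+\mu)$ edges to small vertices.
\end{itemize}
Optimising the resulting inequality over $d(x)=k$ yields two regimes. In the first, the extremal configuration is a triangle-like structure, where $k(\Delta+1-k)$-type constraints give $b^2+3b(\Delta+1)-2(\Delta+1)^2\ge 0$, i.e.\ $b=\frac{\sqrt{17}-3}{2}(\Delta+1)$. In the second, the fan count gives $b$ solving $(2b-\Delta)^2=2\Delta-1$, i.e.\ $b=\frac{\Delta+\sqrt{2\Delta-1}}{2}$. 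Taking the minimum of the two values proves the first part, and comparing them gives the crossover at $\Delta=112/113$.

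\textbf{Small multiplicity.} Under the hypothesis on $\mu(G)$ we replace (i) by the generalised Woodall lemma (ii). This gives a big neighbour $y$ many more $\Delta$-neighbours, so $y$ can afford to send more charge. Redoing the optimisation with the parameter $\mu$ gives the bound $\frac{2\mu\Delta+2\mu(2\mu-1)}{4\mu-1}$. The stated upper bound on $\mu$ is exactly what is needed for the lemma's degree threshold to lie below $b$, so that the lemma's hypotheses hold. The threshold value $\frac{1+\sqrt{2\Delta-1}}4$ is the point where this bound meets the first bound.

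\textbf{Main obstacle.} I expect the main obstacle to be the precise statement and proof of the multigraph Woodall lemma. The discharging accounting for a big vertex with several small neighbours of different degrees, weighted by multiplicities, is also delicate: one must take the worst small neighbour and show that the convexity of the charge function makes that case extremal.
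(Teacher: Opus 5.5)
Your framework (discharging against a threshold $b$, with fan-type lemmas limiting how many edges a big vertex has to small ones) is the paper's. There is a genuine gap, however, exactly where the constant $\frac{\sqrt{17}-3}{2}$ arises. Your rule, in which each big neighbour sends $\mu(x,y)\frac{b-d(x)}{d(x)}$ so that $x$ ``receives exactly $b-d(x)$'', presupposes that every neighbour of a small vertex is big. The fan argument only gives $d(x)+d(y)\ge\Delta+2$, so this is guaranteed only when $d(x)\le\Delta+1-b$. For $\Delta+1-b<d(x)<b$, two small vertices can be adjacent. The extremal example already shows this. Take the triangle with multiplicities $k,k,1$ and $\Delta=2k\ge 8$. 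The two vertices of degree $k+1$ are adjacent and both lie below $b$, and under your rule each ends with charge $b-\frac{b-k-1}{k+1}<b$.

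The paper avoids this with a supply-driven rule: each big $y$ gives away exactly $d(y)-q$, split over its $m_y$ edges to small vertices. Then big vertices never need checking, small--small edges simply contribute nothing, and the work is a lower bound on what $x$ receives. In the middle range (Case~2) there are two subcases. If $x$ has a small neighbour $y$, Corollary~\ref{cor:degree} applied to $(x,y)$ gives at least $\Delta-d(y)+1\ge\Delta-q+1$ edges from $x$ to vertices of degree at least $2\Delta+2-d(x)-d(y)>q$. If all neighbours are big, the paper splits according to whether $d(y)\ge\Delta+1-\frac{d(x)}{2}$. This leads to $\frac{2(\Delta-q+1)(\Delta+1)}{\Delta+1+q}\ge q$, i.e.\ $q^2+3(\Delta+1)q-2(\Delta+1)^2\le 0$ (your inequality has the sign reversed). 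Your ``triangle-like structure, $k(\Delta+1-k)$-type constraints'' names this answer but gives no argument for it.

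Where your rule does apply, your accounting is sound and dual to the paper's Case~1. A $\Delta$-vertex whose smallest small neighbour has degree $k\le\Delta+1-b$ has at most $k-1$ edges to small vertices. The requirement $(k-1)\frac{b-k}{k}\le\Delta-b$ reduces to $b\le\frac{k(k+\Delta-1)}{2k-1}$, whose minimum over $k$ is $\frac{\Delta+\sqrt{2\Delta-1}}{2}$.

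Second, your adjacency lemma (i) is false for multigraphs, in both forms. In the same triangle (any $k\ge2$), let $x$ be the $\Delta$-vertex and $y$ a $(k+1)$-vertex, so $\mu(x,y)=k$. Then $\Delta-d(y)+\mu(x,y)=2k-1$, yet $N(x)\setminus\{y\}$ is a single vertex of degree $k+1<\Delta$, and your weighted sum equals $1$. The correct statement (Lemma~\ref{lem:degree}, Corollary~\ref{cor:degree}) counts \emph{edges}, $|E_G(y,Z_y)|\ge\Delta+1-d(x)$, and only guarantees their other ends degree at least $2\Delta+2-d(x)-d(y)$. Hence a big $y$ has at most $d(y)+d(x)-\Delta-1$ edges to small vertices, not $d(y)-(\Delta-d(x_{\min})+\mu)$. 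Even this holds only when $2\Delta+2-d(x)-d(y)\ge b$, which fails once $y$ has a small neighbour of degree near $b$.

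Finally, in the second part the hypothesis on $\mu$ is not what makes Lemma~\ref{lem: sigma} applicable; that lemma holds for every edge. The hypothesis is used to get $q\le\Delta-(2\mu-1)^2$. This makes the Case~1 quantity $(d(x)-1)+\frac{\Delta-q}{d(x)-1}$ non-increasing for $d(x)\le 2\mu$, so it can be evaluated at $d(x)=2\mu$. For $2\mu<d(x)\le\Delta-q+1$, the lemma is applied at the small vertex $x$ itself:
\begin{itemize}
\item choose $y$ minimizing $\sigma'(x,y)$;
\item use $m_z\le d(z)-\sigma'(x,z)$ for every neighbour $z$;
\item combine $|E_G(x,Z)|\ge\Delta-\sigma'(x,y)-\mu+1$ with $\sigma'(x,z)\ge 2\Delta-d(x)-2\mu+2-\sigma'(x,y)$ on $Z$.
\end{itemize}
A monotonicity argument then yields $M'(x)\ge d(x)+(\Delta-q)\frac{2d(x)}{d(x)+2\mu-2}$. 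The gain comes from pairing $y$ with the many $z\in Z$, not from a single big neighbour having more $\Delta$-neighbours.
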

The first bound holds without any restriction on the multiplicity and yields $\overline{d}(G) \ge \tfrac{1}{2}\Delta$, whereas the second requires a condition on $\mu$ but ensures $\overline{d}(G) > \tfrac{1}{2}\Delta$.

Second, we verify Conjecture~\ref{conj:avg-degree} for small values of $\Delta(G)$.
\begin{theorem}\label{thm:avg-degree-2-8}
Let $G$ be a $\Delta$-critical graph. If $\Delta \in \{2,3,4,5,6,7,8\}$, then $\overline{d}(G)\geq \frac{2\Delta+2}{3}$.
\end{theorem}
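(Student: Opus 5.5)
We will argue by discharging, using the standard adjacency lemmas for critical multigraphs. The basic tool is the multigraph version of Vizing's adjacency lemma (Andersen / Goldberg type): if $G$ is $\Delta$-critical and $e=xy\in E(G)$, then for every $\varphi\in\mathcal{C}^\Delta(G-e)$ the fan at $x$ is elementary, which gives
\[
d(x)+d(y)-\mu(x,y)\ge \Delta+2 ,
\]
and more generally a Kierstead-path/fan counting that, for any vertex $y$ and any neighbour $x$, forces many neighbours of $y$ of large degree. Since $G$ is critical, it is $2$-edge-connected and $\delta(G)\ge 2$. Also, for $\Delta\le 8$ Theorem~\ref{THM-General} already gives $\overline d(G)\ge \frac{\sqrt{17}-3}{2}(\Delta+1)\approx 0.56(\Delta+1)$. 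This is below $\frac{2}{3}(\Delta+1)$, so we need a sharper, case-specific argument.

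Each vertex $v$ starts with charge $d(v)$, and the goal is to redistribute so that every vertex ends with at least $q:=\frac{2\Delta+2}{3}$. Vertices with $d(v)<q$ are \emph{small}. By the adjacency inequality, a small vertex $x$ with neighbour $y$ satisfies $d(y)\ge \Delta+2-d(x)+\mu(x,y)$, so every neighbour of a small vertex is big. In fact it is big enough to be near $\Delta$, because $\mu(x,y)\ge 1$ and $d(x)\le \lceil q\rceil -1$. The rule is the following. Each big vertex $y$ sends to each adjacent small vertex $x$ the amount
\[
\frac{\mu(x,y)}{d(x)}\,(q-d(x)) ,
\]
so a small vertex receives exactly $q-d(x)$ in total. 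It remains to show that a big vertex $y$ never gives away more than $d(y)-q$.

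The main step is bounding the number of small neighbours of a big vertex, weighted by multiplicity. Here we use the fan argument. Fix a small neighbour $x$ of $y$ and a coloring $\varphi$ of $G-xy$. Each color missing at $x$ (there are at least $\Delta-d(x)+1$ of them) must be present at $y$. Moreover, the standard Vizing-fan recolouring shows that the edges at $y$ coloured with these missing colours lead to vertices $z$ that satisfy $d(x)+d(z)\ge \Delta+2+\mu$-type inequalities, hence are big. This yields a bound of the form
\[
\sum_{x \text{ small},\, x\sim y}\mu(x,y)\le d(y)-(\Delta-d(x_{\min})+1) .
\]
Combining this with the small values of $d(x)\in\{2,\dots,\lceil q\rceil-1\}$, we check $d(y)-\text{(sent)}\ge q$ by a finite case analysis. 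We treat each $\Delta\in\{2,\dots,8\}$ separately, listing the possible pairs $(d(x),d(y))$ allowed by the adjacency lemma. For $\Delta=2$ the critical graphs are odd cycles and the bound is immediate. For $\Delta=3$ (and in general), degree-$2$ vertices force both neighbours to have degree $\Delta$, with the multiplicity of each such edge controlled.

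The main obstacle will be the vertices $y$ of degree $\Delta$ or $\Delta-1$ that are adjacent via parallel edges to several small vertices. This occurs in the tight example: $K_3$ with doubled edges gives a degree-$2k+1$ central vertex adjacent to two vertices of degree $k+1$. There the crude counting is not enough. I would first try to prove a stronger adjacency lemma in the spirit of Woodall's generalized lemma mentioned above: if $y$ has two small neighbours $x_1,x_2$, then $d(x_1)+d(x_2)+\mu$-terms force $d(y)$ up, via a Tashkinov-tree/Kierstead-path argument on the path $x_1yx_2$ showing the vertex set is elementary. An elementary set $X$ of three vertices satisfies $\sum_{v\in X}(\Delta-d_{G-e}(v))\le \Delta$-type parity bounds, and this gives exactly the $\frac{2\Delta+2}{3}$ threshold. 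If the charge still fails for some $\Delta$, we would refine the rule to let small vertices with big second neighbourhoods receive charge two steps away.
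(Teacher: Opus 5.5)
There is a genuine gap, and it already shows up on the extremal graph.

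\textbf{The adjacency inequality and the rule.} Your adjacency inequality is off by one for multigraphs. From $\overline{\varphi}(x)\cap\overline{\varphi}(y)=\emptyset$, and the fact that the remaining $\mu(x,y)-1$ parallel edges carry colours present at both ends, one only gets $d(x)+d(y)\ge \Delta+1+\mu(x,y)$ (Lemma~\ref{prop: 26}(a)). This bound is attained in the extremal graph by the edges between the $2k$-vertex and either $(k+1)$-vertex. In that graph the central vertex has degree $2k=\Delta$, and the two $(k+1)$-vertices are joined to each other by a single edge.

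Hence your claim that every neighbour of a small vertex is big fails. For $\Delta\in\{4,6,8\}$ two small vertices with $d(x)+d(y)=\Delta+2$ may be adjacent, and for $\Delta=7$ even your own inequality permits two adjacent $5$-vertices. When a small vertex $x$ has a small neighbour, your rule gives it only $\frac{q-d(x)}{d(x)}\sum_{y\ \mathrm{big}}\mu(x,y)<q-d(x)$. Concretely, for $\Delta=4$ each $3$-vertex of the extremal graph receives $\frac23\cdot\frac13=\frac29$ and ends at $\frac{29}{9}<\frac{10}{3}$, while the $4$-vertex keeps $\frac{32}{9}$. The total charge there is exactly $3q$, so any valid rule must be exact on this graph, and yours is not.

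\textbf{Zero-excess vertices.} When $q$ is an integer ($\Delta\in\{5,8\}$), a vertex of degree exactly $q$ is big in your sense but has no excess. Your rule nevertheless makes it pay $\frac{\mu(x,y)}{d(x)}(q-d(x))>0$ to each small neighbour. Examples are a $3$-vertex next to a $4$-vertex for $\Delta=5$, or a $4$- or $5$-vertex next to a $6$-vertex for $\Delta=8$; the adjacency lemma excludes none of these.

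\textbf{The small-neighbour count.} Your bound on the number of small neighbours of a big vertex $y$ presumes the vertices of $Z_y$ are big. Corollary~\ref{cor:degree} only gives $d(z)\ge 2\Delta+2-d(x)-d(y)$ plus the sum bound in part (b). For $\Delta=8$, $d(x)=5$, $d(y)=8$ this still allows a $5$-vertex in $Z_y$.

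\textbf{What is missing.} You need the rigidity of an edge $xy$ with $d(x)+d(y)=\Delta+2$; this is how the paper handles these problems (Claim~\ref{claim:rechargeDelta+2}). Applying Corollary~\ref{cor:degree} to both ends gives $|E(x,Z_x)|+|E(y,Z_y)|\ge\Delta$, while $|E(x,N(x)\setminus\{y\})|+|E(y,N(y)\setminus\{x\})|=\Delta+2-2\mu(x,y)$. So $\mu(x,y)=1$ and $N(\{x,y\})\setminus\{x,y\}=Z_x\cup Z_y$. Every vertex there is a $\Delta$-vertex, and all its other neighbours have degree at least $2\Delta+2-d(x)-d(y)=\Delta$. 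Thus these $\Delta$-vertices have no small neighbours besides $x$ and $y$, and can hand their entire excess $\Delta-q$ to the pair. That compensates for a small or zero-excess partner.

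To exploit this, the rule has to be adaptive on the sending side. The paper lets each vertex with $d(y)>q$ split $d(y)-q$ over its $m_y$ edges to small vertices, rather than having each small vertex draw a fixed fraction of its deficit from every neighbour. It then carries out the degree-by-degree case analysis for each $\Delta$, which your proposal only announces. For $\Delta=7$ it also needs a two-stage rule in which $7$-vertices first give $\frac13$ to every $5$-neighbour. Your closing remark about routing charge from the second neighbourhood points in the right direction. Without the pair-rigidity lemma and an explicit rule that survives the extremal graph, the argument is not yet a proof.
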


Define the density of a graph $G$ by
\[
\Gamma(G):=\max_{H\subseteq G,\ |H|\ge2}
\left\lceil\frac{|E(H)|}{\lfloor |H|/2\rfloor}\right\rceil.
\]
It is well known that $\chi'(G)\ge \max\{\Delta(G),\Gamma(G)\}$. The Goldberg--Seymour Conjecture~\cite{Goldberg1973,Seymour1979}, recently confirmed~\cite{CHYZ2024,CJZ2025,J2026}, yields $\chi'(G)\in \{\max\{\Delta(G),\Gamma(G)\},\ \max\{\Delta(G)+1,\Gamma(G)\}\}$.

In 1984, Goldberg~\cite{Goldberg1984} further proposed the following conjecture on the density of graphs, which we refer to as Goldberg's Density Conjecture.
\begin{conjecture}[Goldberg~\cite{Goldberg1984}]\label{conj:density}
    For any graph $G$, if $\Gamma(G) \le \Delta(G)-1$, then $\chi'(G) = \Delta(G)$.
\end{conjecture}
Equivalently, if $\chi'(G) \ge \Delta(G)+1$, then $\Gamma(G) \ge \Delta(G)$. By the confirmed Goldberg--Seymour Conjecture, if $\chi'(G)\ge \Delta(G)+2$, then $\Gamma(G)= \chi'(G)\ge \Delta(G)+2 > \Delta(G)$. 
The nontrivial case is $\chi'(G) = \max\{\Delta(G)+1, \Gamma(G)\} = \Delta(G)+1$. For $\Delta$-critical graphs with $\Delta \ge 3$, the density exceeds the average degree (see Corollary~\ref{cor: density>degree}). 
Therefore, Theorem~\ref{thm:avg-degree-2-8} verifies Conjecture~\ref{conj:density} for $\Delta(G) \in \{2,3,4,5\}$.

The rest of the paper is organized as follows. In the next section, we introduce notation and preliminary results on coloring and $\Delta$-critical graphs. Section~\ref{sec:general} presents the proof of Theorem~\ref{THM-General}. Section~\ref{sec:2-8} proves Theorem~\ref{thm:avg-degree-2-8} for small values of $\Delta$. A generalized version of Woodall's adjacency lemma is stated in Section~\ref{sec:pre} and proved in Section~\ref{sec:prooflemma}. We conclude this paper with some remarks.

We use the following notation throughout the paper. Let $G$ be a graph. For $X, Y\subseteq V(G)$, let $E_G(X, Y)$ denote the set of edges with one end in $X$ and the other in $Y$. We write $E_G(x):=E_G(\{x\},V(G))$ and $E_G(x,y):=E_G(\{x\},\{y\})$. The multiplicity of $\{x,y\}$ is $\mu_G(xy)=|E_G(x,y)|$.
Let $N_G(x):=\{y\in V(G): E_G(x,y)\ne\emptyset\}$ be the neighborhood of $x$, and let $d_G(x):=|E_G(x)|$ be the degree of $x$. A vertex of degree $d$ is called a \emph{$d$-vertex}, and a \emph{$d$-neighbor} of $x$ is a neighbor of $x$ that is a $d$-vertex. For $F \subseteq E(G)$, let $V_G(F)$ denote the set of endvertices of edges in $F$, and write $V_G(e)$ when $F=\{e\}$.
When there is no ambiguity, we omit the subscript $G$. For $X\subseteq V(G)$ and $F\subseteq E(G)$, let $G-X$ denote the subgraph with $V(G-X)=V(G)\setminus X$ and $E(G-X)=E(G)\setminus E_G(X,V(G))$, and let $G-F$ denote the subgraph with $V(G-F)=V(G)$ and $E(G-F)=E(G)\setminus F$. When $X=\{x\}$ and $F=\{e\}$, we write $G-\{x\}$ simply as $G-x$, and $G-\{e\}$ as $G-e$.

\section{Preliminaries}\label{sec:pre}
In this section, we introduce some notation and collect some definitions and results on edge-coloring and $\Delta$-critical graphs.
We will generally follow~\cite{SSTF2012GraphEC} for notation and definitions. 

Let $G$ be a $\Delta$-critical graph, $e_1 \in E_G(y_0, y_1)$, and let $\varphi \in \mathcal{C}^{\Delta}(G-e_1)$. For each vertex $v \in V(G)$, define 
$$
\varphi(v) = \{\varphi(e): e\in E_G(v)\setminus\{e_1\}\} \text{ and } \overline{\varphi}(v) = [\Delta]\setminus\varphi(v).
$$
We refer to $\varphi(v)$ as the set of colors \emph{present} at $v$, and to $\overline{\varphi}(v)$ as the set of colors \emph{missing} at $v$. We say a set $X \subseteq V(G)$ is \emph{$\varphi$-elementary} if the sets $\overline{\varphi}(v)$ with $v \in X$ are pairwise disjoint. For distinct colors $\alpha, \beta \in [\Delta]$, let $H$ be the subgraph of $G$ with $V(H) = V(G)$ and $E(H) = \varphi^{-1}(\alpha) \cup \varphi^{-1}(\beta)$. Each component of $H$ is either a path or an even cycle (possibly a $2$-cycle), whose edges are colored alternately with $\alpha$ and $\beta$. Such a component is called an \emph{$(\alpha,\beta)$-chain} of $G$ with respect to $\varphi$. Any two $(\alpha,\beta)$-chains are either identical or disjoint. 

For a multigraph, the endvertices $x,y$ may not determine the edges, but if the coloring of the edge is clear or not important, we still write $e=xy$ for brevity.

A \emph{multi-fan} introduced by Stiebitz et al.~\cite{SSTF2012GraphEC} at $y_0$ with respect to $e_1\in E_G(y_0, y_1)$ and $\varphi \in \mathcal{C}^{\Delta}(G-e_1)$ is a sequence 
$$
F=(e_1, y_1, e_2, y_2,\dots, e_p, y_p),
$$ 
where $p \ge 1$, consisting of edges $e_1, e_2, \dots, e_p$ and vertices $y_1, y_2, \dots, y_p$, such that:
\begin{itemize}
    \item[(F1)] The edges $e_1, e_2, \dots, e_p$ are distinct, and $e_i \in E_G(y_0, y_i)$ for each $i \in [p]$.
    \item[(F2)] For each $2 \le i \le p$, there exists $1 \le j < i$ such that $\varphi(e_i) \in \overline{\varphi}(y_j)$.
\end{itemize}
Note that $y_0$ is not in $V(F)$. Stiebitz et al. studied multi-fans and obtained the following results. 

\begin{lemma}[Stiebitz, Scheide, Toft and Favrholdt~\cite{SSTF2012GraphEC}]\label{lem:multi_fan} 
    Let $F = (e_1, y_1, \dots, e_p, y_p)$ be a multi-fan at $y_0$ with respect to $e_1$ and $\varphi$. The following statements hold:
    \begin{enumerate}[label=(\alph*)]
        \item $\{y_0, y_1, \dots, y_p\}$ is $\varphi$-elementary;
        \item If $\alpha \in \overline{\varphi}(y_0)$ and $\beta \in \overline{\varphi}(y_i)$ for $i \in [p]$, then there is an $(\alpha, \beta)$-chain with respect to $\varphi$ having endvertices $y_0$ and $y_i$;
        \item If $F$ is a maximal multi-fan at $y_0$ with respect to $e_1$ and $\varphi$, then $|V(F)| \ge 2$ and 
        \[
        \sum_{z \in V(F)}(d_G(z)+\mu_F(y_0z)-\Delta) = 2.
        \]
    \end{enumerate}
\end{lemma}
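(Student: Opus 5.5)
The plan is to prove (a) and (b) together by a minimal-counterexample argument, and then derive (c) by a counting argument that uses (a) and (b).

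\textbf{Parts (a) and (b).} First we show that (a) implies (b). Suppose (a) holds for every multi-fan with respect to every coloring in $\mathcal{C}^{\Delta}(G-e_1)$. Let $\alpha\in\overline{\varphi}(y_0)$ and $\beta\in\overline{\varphi}(y_i)$. By (a), $\alpha\ne\beta$, $\alpha\in\varphi(y_i)$ and $\beta\in\varphi(y_0)$. Hence $y_0$ and $y_i$ are both endvertices of $(\alpha,\beta)$-chains. Suppose these chains differ, and let $P$ be the chain starting at $y_i$. Interchange $\alpha$ and $\beta$ on $P$ to obtain $\varphi'$. Then $\beta\in\overline{\varphi'}(y_0)\cap\overline{\varphi'}(y_i)$.

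We then check that a suitable prefix $(e_1,y_1,\dots,e_q,y_q)$ with $q\ge i$ is still a multi-fan with respect to $\varphi'$. To do this, take the shortest prefix containing $y_i$ and look at the edges colored $\alpha$ or $\beta$. The only vertices of the fan whose missing sets change lie on $P$. Elementarity (a) for $\varphi$ forces these colors to be missing only at $y_0$ (for $\alpha$) and at $y_i$ (for $\beta$). So (F2) can only fail for an edge whose color is $\beta$, and such an edge must be justified by $y_i$, which still misses $\beta$. Thus the prefix remains a multi-fan with respect to $\varphi'$, and $\beta$ is missing at two of its vertices, contradicting (a).

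For (a), we argue by induction on $p$, choosing among all counterexamples one with $p$ minimum. Then $\{y_0,\dots,y_{p-1}\}$ is elementary, and (b) holds for the shorter fan by the reduction above. A color $\gamma$ is therefore missing at $y_p$ and at some $y_j$ with $j<p$, or at $y_0$. By (F2), $\varphi(e_p)=\delta\in\overline{\varphi}(y_k)$ for some $k<p$.

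We recolor $e_p$ with a color missing at both of its ends, if necessary after a Kempe swap of a $(\gamma,\alpha)$-chain with $\alpha\in\overline{\varphi}(y_0)$. This frees $\delta$ at $y_0$ and yields a shorter fan with a repeated missing color. Iterating this along the chain of (F2)-predecessors back to $y_1$ finally lets us color $e_1$, contradicting $\chi'(G)=\Delta+1$. I expect this step to be the main obstacle: the fan property has to be preserved after each swap. I would handle it by always truncating to the shortest non-elementary prefix and using (b) for that prefix to locate the swapped chain away from the other fan vertices.

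\textbf{Part (c).} Let $F$ be maximal and put $Z=V(F)$. By (a), the sets $\overline{\varphi}(z)$ for $z\in Z$ are pairwise disjoint and disjoint from $\overline{\varphi}(y_0)$. So each color $\gamma\in\bigcup_{z\in Z}\overline{\varphi}(z)$ appears on exactly one edge at $y_0$. By maximality, that edge must be some $e_i$ with $i\ge2$. Conversely, each $e_i$ with $i\ge2$ carries such a color by (F2), and distinct edges at $y_0$ carry distinct colors. Hence
\[
p-1=\sum_{z\in Z}|\overline{\varphi}(z)|=\sum_{z\in Z}(\Delta-d_G(z))+1,
\]
where the extra $1$ comes from the uncolored edge $e_1$ at $y_1$. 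Since $\sum_{z}\mu_F(y_0z)=p$, this gives
\[
\sum_{z\in Z}\bigl(d_G(z)+\mu_F(y_0z)-\Delta\bigr)=p-(p-2)=2.
\]

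It remains to show $|Z|\ge2$. Suppose $Z=\{y_1\}$. Pick $\beta\in\overline{\varphi}(y_1)$, which is nonempty because $e_1$ is uncolored, and $\alpha\in\overline{\varphi}(y_0)$. By maximality, the $\beta$-edge at $y_0$ goes to $y_1$. Since $\alpha\in\varphi(y_1)$ by (a), the $(\alpha,\beta)$-chain starting at $y_0$ passes through $y_1$ and continues along an $\alpha$-edge. A chain is a path, so it cannot return to $y_1$, and therefore it does not end at $y_1$. This contradicts (b).
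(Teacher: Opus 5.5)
The paper gives no proof of this lemma; it is quoted from Stiebitz, Scheide, Toft and Favrholdt. Your outline is the standard route: a minimal counterexample for (a), (b) deduced from (a) by a Kempe change followed by truncation, and a counting argument for (c). Your part (c) is correct. The deduction (a)$\Rightarrow$(b), however, is wrong as written.

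Since $\beta\in\overline{\varphi}(y_i)$, the chain $P$ starting at $y_i$ begins with an $\alpha$-edge. So after the interchange, $y_i$ misses $\alpha$ and no longer misses $\beta$. Since $y_0\notin V(P)$, $\overline{\varphi'}(y_0)=\overline{\varphi}(y_0)$ still contains $\alpha$ but not $\beta$. The repeated color is therefore $\alpha$, not $\beta$. Your reason why the prefix stays a multi-fan (an edge colored $\beta$ ``must be justified by $y_i$, which still misses $\beta$'') is false. If the unique $\beta$-edge at $y_0$ is some fan edge $e_j$, then under $\varphi'$ no vertex of $F$ misses $\beta$. So (F2) really fails for $e_j$, and $F$ itself need not be a multi-fan with respect to $\varphi'$.

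The truncation you propose does repair this, but for a different reason. Under $\varphi$, $y_i$ is the only vertex of $F$ missing $\beta$. Hence the edge $e_j$ above comes after the first index $q$ with $y_q=y_i$, so $q\le i$, not $q\ge i$. Consider the prefix $(e_1,y_1,\dots,e_q,y_q)$:
\begin{itemize}
\item none of its edges is colored $\alpha$ or $\beta$;
\item no edge color at $y_0$ changes;
\item the only missing set that changes is that of $y_q=y_i$, which trades $\beta$ for $\alpha$. The other end of $P$ misses $\alpha$ or $\beta$, so it lies outside $V(F)\cup\{y_0\}$.
\end{itemize}
So this prefix is a multi-fan with respect to $\varphi'$ with $\alpha\in\overline{\varphi'}(y_0)\cap\overline{\varphi'}(y_q)$. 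This contradicts (a) for a fan of length $q$, which is all your minimal-counterexample scheme needs.

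Two smaller points. First, in the main step of (a), which you only sketch, state that a minimal counterexample has $y_p\notin\{y_1,\dots,y_{p-1}\}$; otherwise $V(F)=V(F_{p-1})$ and (a) already holds. Together with (b) and elementarity of the shorter fan, this gives two things:
\begin{itemize}
\item the swapped $(\gamma,\alpha)$-chain avoids $y_0$ and has both ends outside $\{y_0,\dots,y_{p-1}\}$;
\item neither that swap nor recoloring $e_p$ changes the missing sets of $y_1,\dots,y_{p-1}$ or the colors of $e_2,\dots,e_{p-1}$.
\end{itemize}
Since $\delta=\varphi(e_p)\notin\{\alpha,\gamma\}$ stays missing at $y_k$, the prefix $(e_1,y_1,\dots,e_k,y_k)$ is at once a shorter counterexample. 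No iteration back to $y_1$ is needed, and $\chi'(G)=\Delta+1$ is used only in the base case $p=1$.

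Second, in (c) your chain argument for $|Z|\ge 2$ is superfluous. A $\beta$-edge from $y_0$ to $y_1$ already contradicts $\beta\in\overline{\varphi}(y_1)$.
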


A \emph{Kierstead path}, first introduced by Kierstead~\cite{K1984}, with respect to $e_1 \in E_G(y_0, y_1)$ and $\varphi \in \mathcal{C}^\Delta(G-e_1)$ is a sequence 
$$K = (y_0, e_1, y_1, \dots, e_p, y_p),$$ where $p \ge 1$, consisting of edges $e_1, e_2, \dots, e_p$ and vertices $y_0, y_1, \dots, y_p$, such that:
\begin{itemize}
    \item[(K1)] The vertices $y_0, y_1, \dots, y_p$ are distinct, and $e_i \in E_G(y_{i-1}, y_i)$ for each $i \in [p]$.
    \item[(K2)] For each $2 \le i \le p$, there exists $0 \le j < i$ such that $\varphi(e_i) \in \overline{\varphi}(y_j)$.
\end{itemize}
Clearly, a Kierstead path on three vertices is a multi-fan at $y_1$ and thus $\varphi$-elementary. Kostochka and Stiebitz proved the following results concerning Kierstead paths on four vertices.

\begin{lemma}[Kostochka and Stiebitz~\cite{SSTF2012GraphEC}]\label{lem:ShortK_Path}
    Let $K=(y_0, e_1, y_1, e_2, y_2, e_3, y_3)$ be a Kierstead path on four vertices with respect to $e_1$ and $\varphi$. Then, the following hold:
    \begin{enumerate}[label=(\alph*)]
        \item If $\min\{d_G(y_1),\  d_G(y_2)\}<\Delta$, then $V(K)$ is $\varphi$-elementary;
        \item $|\overline{\varphi}(y_3)\cap (\overline{\varphi}(y_0)\cup \overline{\varphi}(y_1))|\le 1$;
        \item $d_G(y_1)+d_G(y_2)+d_G(y_3) \ge 2\Delta+2$, and moreover $d_G(y_0)+d_G(y_1)+d_G(y_2)+d_G(y_3) \ge 3\Delta+1$ with equality only if $\min\{d_G(y_1),\  d_G(y_2)\}=\Delta$.
    \end{enumerate}
\end{lemma}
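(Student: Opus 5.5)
The plan is to work entirely with recolorings of $G-e_1$ that keep $K$ a Kierstead path, and to derive every contradiction from Lemma~\ref{lem:multi_fan}. The first three vertices $(y_0,e_1,y_1,e_2,y_2)$ form a multi-fan at $y_1$, so $\{y_0,y_1,y_2\}$ is $\varphi$-elementary for every admissible $\varphi$. Moreover, for $\alpha$ missing at one of these vertices and $\beta$ missing at another, the $(\alpha,\beta)$-chain joins them. All three parts therefore reduce to controlling how $\overline{\varphi}(y_3)$ meets $\overline{\varphi}(y_0)\cup\overline{\varphi}(y_1)\cup\overline{\varphi}(y_2)$.

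\textbf{Part (b).} Note first that $\varphi(e_2)\in\overline{\varphi}(y_0)$, because $e_2$ is incident with $y_1$. Also $\varphi(e_3)\in\overline{\varphi}(y_0)\cup\overline{\varphi}(y_1)$, because it is present at $y_2$. Suppose two distinct colors $\gamma,\gamma'\in\overline{\varphi}(y_3)$ both lie in $\overline{\varphi}(y_0)\cup\overline{\varphi}(y_1)$. Let $\delta=\varphi(e_3)$. I would first swap along the $(\gamma,\delta)$-chain at $y_3$. If this chain does not end in $\{y_0,y_1,y_2\}$, the swap makes $\delta$ missing at $y_3$ while the fan at $y_1$ is preserved. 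Then $e_3$ can be recolored or shifted: uncolor $e_3$ and recolor $e_2$ or $e_1$ so that a new multi-fan at $y_2$ contains $y_3$ and violates elementarity. If the chain does end in $\{y_0,y_1,y_2\}$, Lemma~\ref{lem:multi_fan}(b) already identifies its endpoints, and I would use the second color $\gamma'$ to perform a swap along a chain that avoids those endpoints. The case split is by which of $y_0,y_1$ misses $\gamma$ and which misses $\gamma'$. This case analysis is the technical core and the step I expect to be the main obstacle.

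\textbf{Part (a).} The argument runs the same way. The hypothesis $\min\{d_G(y_1),d_G(y_2)\}<\Delta$ provides an extra missing color $\eta$ at $y_1$ or $y_2$. Suppose some $\gamma\in\overline{\varphi}(y_3)\cap\overline{\varphi}(y_j)$. Swapping along the $(\eta,\gamma)$-chain lets us bring $\gamma$ into the situation of (b) with a second shared color, or produce directly a non-elementary multi-fan at $y_2$. Either outcome is a contradiction.

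\textbf{Part (c).} This is counting. We have $|\overline{\varphi}(y_0)|=\Delta-d(y_0)+1$, $|\overline{\varphi}(y_1)|=\Delta-d(y_1)+1$, $|\overline{\varphi}(y_2)|=\Delta-d(y_2)$ and $|\overline{\varphi}(y_3)|=\Delta-d(y_3)$. The sets $\overline{\varphi}(y_0),\overline{\varphi}(y_1),\overline{\varphi}(y_2)$ are disjoint. By (b), $\overline{\varphi}(y_3)$ adds at least $|\overline{\varphi}(y_3)|-1$ new colors, and the color $\varphi(e_3)$ is missing at none of $y_2,y_3$. Bounding the union of the missing sets by $\Delta$ gives $d(y_0)+d(y_1)+d(y_2)+d(y_3)\ge 3\Delta+1$. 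If $\min\{d(y_1),d(y_2)\}<\Delta$, then (a) removes the $-1$ and the inequality is strict; this gives the equality clause. For $d(y_1)+d(y_2)+d(y_3)\ge 2\Delta+2$, I would run the same count on $\overline{\varphi}(y_1)\cup\overline{\varphi}(y_2)\cup\overline{\varphi}(y_3)$ inside $[\Delta]\setminus\{\varphi(e_2),\varphi(e_3)\}$, which removes two colors present at $y_1$ and $y_2$. I would also need the extra observation that $\overline{\varphi}(y_3)\cap\overline{\varphi}(y_2)=\emptyset$, which holds since $e_3$ joins them through a fan-type argument. The remaining care is to check that $\varphi(e_2)$ and $\varphi(e_3)$ are distinct and lie outside all three missing sets.
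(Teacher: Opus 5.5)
The paper does not prove this lemma; it quotes it from Stiebitz et al. Your argument therefore has to stand on its own, and it has two genuine gaps.

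\textbf{Parts (a) and (b).} The decisive case analysis is only announced, and the one concrete step you describe cannot produce a contradiction.

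Set $\delta=\varphi(e_3)$. This colour is present at $y_3$. By elementarity of $\{y_0,y_1,y_2\}$, every $\gamma\in\overline{\varphi}(y_0)\cup\overline{\varphi}(y_1)$ is present at $y_2$. Hence the $(\gamma,\delta)$-chain at $y_3$ always runs through $e_3$, with $y_2$ as an inner vertex. It also never ends in $\{y_0,y_1\}$: either one of $y_0,y_1$ misses both colours and the other misses neither, or $y_0,y_1$ are the two ends of a single chain by Lemma~\ref{lem:multi_fan}(b).

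So your second branch is vacuous. The first branch merely puts $\gamma$ on $e_3$ and makes $\delta$ missing at $y_3$. The result is again a Kierstead path with two colours of $\overline{\varphi}(y_0)\cup\overline{\varphi}(y_1)$ missing at $y_3$, so no progress has been made.

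You then want a fan at $y_2$: colour $e_1$ with $\beta=\varphi(e_2)$ and uncolour $e_2$. The fan $(e_2,y_1,e_3,y_3)$ is available only when the colour on $e_3$ lies in $\overline{\varphi}(y_1)$. Even then, its elementarity forbids at $y_3$ only the colours of $\overline{\varphi}(y_1)\cup\overline{\varphi}(y_2)\cup\{\beta\}$. It does not exclude $y_3$ missing two colours of $\overline{\varphi}(y_0)\setminus\{\beta\}$.

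A spare missing colour at $y_1$ or $y_2$ does break this. For example, suppose $\varphi(e_3)\in\overline{\varphi}(y_1)$ and $d_G(y_1)<\Delta$. Pick $\alpha\in\overline{\varphi}(y_1)\setminus\{\varphi(e_3)\}$ and interchange the $(\alpha,\gamma)$-chain at $y_3$. This chain avoids $y_0,y_1$, and afterwards the fan at $y_2$ is not elementary. That is the mechanism behind (a).

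Once (a) is known, however, (b) only has content when $d_G(y_1)=d_G(y_2)=\Delta$. In that case $\overline{\varphi}(y_1)$ is a single colour and $\overline{\varphi}(y_2)=\emptyset$, and your proposal contains no idea for it. Since your sketch of (a) is itself reduced to (b), neither part is established.

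\textbf{Part (c).} The four-vertex bound and the equality clause do follow by counting. They use (a), (b), elementarity of $\{y_0,y_1,y_2\}$ and $\overline{\varphi}(y_2)\cap\overline{\varphi}(y_3)=\emptyset$.

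The three-vertex bound does not follow as you describe. Your count assumes $\varphi(e_2)$ and $\varphi(e_3)$ lie outside all three missing sets, but neither need be:
\begin{itemize}
\item By (K2), $\varphi(e_3)$ lies in $\overline{\varphi}(y_0)\cup\overline{\varphi}(y_1)$, so it need not be present at $y_1$. If $d_G(y_0)=\Delta$, then $\overline{\varphi}(y_0)=\{\varphi(e_2)\}$, which forces $\varphi(e_3)\in\overline{\varphi}(y_1)$.
\item $\varphi(e_2)$ may be missing at $y_3$.
\end{itemize}
Take $d_G(y_0)=\Delta$, $\varphi(e_3)\in\overline{\varphi}(y_1)$ and $\overline{\varphi}(y_1)\cap\overline{\varphi}(y_3)=\{\gamma\}$. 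Then counting yields only $d_G(y_1)+d_G(y_2)+d_G(y_3)\ge 2\Delta+1$.

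An extra Kempe step is needed here. Write $\beta=\varphi(e_2)$.
\begin{itemize}
\item The $(\gamma,\beta)$-chain at $y_1$ starts with $e_2$ and ends at $y_0$. Hence the $(\beta,\gamma)$-chain at $y_3$ avoids $y_0,y_1,y_2$.
\item After interchanging that chain, $\beta$ is missing at $y_3$, while $e_2$ and $e_3$ keep their colours.
\item Now the $(\varphi(e_3),\beta)$-chain at $y_1$ is $y_1e_2y_2e_3y_3$. It ends at $y_3\neq y_0$, contradicting Lemma~\ref{lem:multi_fan}(b).
\end{itemize}
The same chain shows that $\beta\in\overline{\varphi}(y_3)$ forces $\varphi(e_3)\in\overline{\varphi}(y_0)$. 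That supplies a colour outside $\overline{\varphi}(y_1)\cup\overline{\varphi}(y_2)\cup\overline{\varphi}(y_3)$ in the remaining case, where $\overline{\varphi}(y_1)\cap\overline{\varphi}(y_3)=\emptyset$ but $\beta$ is missing at $y_3$.
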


We now derive a useful consequence of Lemmas~\ref{lem:multi_fan}
and~\ref{lem:ShortK_Path}.

Let $G$ be a $\Delta$-critical graph, let $e \in E_G(x,y)$, and let $\varphi \in \mathcal{C}^{\Delta}(G-e)$.
Since $\overline{\varphi}(x)\cap \overline{\varphi}(y)=\emptyset$, the vertex $y$
is incident with exactly $\Delta+1-d_G(x)$ edges whose colors lie in
$\overline{\varphi}(x)$. Let
$$
F_y(\varphi)=\{f\in E_G(y)\setminus E_G(x,y): \varphi(f)\in \overline{\varphi}(x)\},
$$
and let
$$
Z_y(\varphi)=V_G(F_y(\varphi))\setminus\{y\}.
$$
Clearly, $Z_y(\varphi)\subseteq N_G(y)\setminus\{x\}$ and $|E_G(y, Z_y(\varphi))| \ge |F_y(\varphi)| = |\overline{\varphi}(x)|= \Delta+1-d_G(x)$.

\begin{lemma}\label{lem:degree}
Let $F_y(\varphi)$ and $Z_y(\varphi)$ be defined as above. Then, the following hold:
\begin{enumerate}[label=(\alph*)]
 \item For each $z\in Z_y(\varphi)$, $z$ is incident
    with at least $2\Delta+2-d_G(x)-d_G(y)$ edges $zu$. Moreover, for any such edge
    $zu$ with $u\notin\{x,y\}$,
    \[
    d_G(u)\ge
    \begin{cases}
    2\Delta+2-d_G(x)-d_G(y) & \text{if } \min\{d_G(y),d_G(z)\}<\Delta,\\[4pt]
    2\Delta+1-d_G(x)-d_G(y) & \text{if } \min\{d_G(y),d_G(z)\}=\Delta;
    \end{cases}
    \]
    \item
    \[
    \sum_{z\in Z_y(\varphi)} d_G(z)\ge
    \Delta |Z_y(\varphi)|+\Delta+2-d_G(x)-d_G(y).
    \]
\end{enumerate}
Similarly, by interchanging the roles of $x$ and $y$, the same conclusions hold.
\end{lemma}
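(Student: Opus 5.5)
The plan is to realize every configuration in the statement as a multi-fan at $y$ or a Kierstead path starting at $x$, and then read off degree bounds by counting missing colors. Throughout we use $|\overline{\varphi}(v)|=\Delta-d_G(v)$ for $v\notin\{x,y\}$ and $|\overline{\varphi}(x)|=\Delta+1-d_G(x)$, $|\overline{\varphi}(y)|=\Delta+1-d_G(y)$, since $e$ is uncolored. We also use that $\overline{\varphi}(x)\cap\overline{\varphi}(y)=\emptyset$, because otherwise $e$ could be colored. Hence $M:=\overline{\varphi}(x)\cup\overline{\varphi}(y)$ has exactly $2\Delta+2-d_G(x)-d_G(y)$ colors.

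For (b), list the edges of $F_y(\varphi)$ as $f_1,\dots,f_q$ with other ends $z_1,\dots,z_q$; repetitions of vertices are allowed, and every $z_i\neq x$ since $f_i\notin E_G(x,y)$. Then $(e,x,f_1,z_1,\dots,f_q,z_q)$ is a multi-fan at $y$ with respect to $e$ and $\varphi$: (F1) is clear, and (F2) holds with $j=1$ (the vertex $x$) because each $\varphi(f_i)\in\overline{\varphi}(x)$. By Lemma~\ref{lem:multi_fan}(a), the set $\{y,x\}\cup Z_y(\varphi)$ is $\varphi$-elementary, so the sum of the sizes of the missing-color sets is at most $\Delta$:
\[
(\Delta+1-d_G(x))+(\Delta+1-d_G(y))+\sum_{z\in Z_y(\varphi)}(\Delta-d_G(z))\le \Delta .
\]
Rearranging gives exactly the inequality in (b).

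For (a), fix $z\in Z_y(\varphi)$ and an edge $f\in F_y(\varphi)$ joining $y$ and $z$. The sequence $(e,x,f,z)$ is a multi-fan at $y$, so $\{x,y,z\}$ is $\varphi$-elementary by Lemma~\ref{lem:multi_fan}(a). Thus every color of $M$ is present at $z$, and $z$ is incident with at least $|M|=2\Delta+2-d_G(x)-d_G(y)$ edges $zu$ whose colors lie in $M$; these are the edges meant in (a). Now take such an edge $zu$ with $u\notin\{x,y\}$. Then $K=(x,e,y,f,z,zu,u)$ is a Kierstead path on four vertices: $\varphi(f)\in\overline{\varphi}(x)$, and $\varphi(zu)$ lies in $\overline{\varphi}(x)$ or $\overline{\varphi}(y)$, so (K2) holds with $j\in\{0,1\}$. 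We then split according to the two cases in (a).
\begin{itemize}
\item If $\min\{d_G(y),d_G(z)\}<\Delta$, Lemma~\ref{lem:ShortK_Path}(a) makes $V(K)$ elementary. Summing the missing colors gives
\[
(\Delta+1-d_G(x))+(\Delta+1-d_G(y))+(\Delta-d_G(z))+(\Delta-d_G(u))\le\Delta .
\]
Using $\Delta-d_G(z)\ge 0$, this yields $d_G(u)\ge 2\Delta+2-d_G(x)-d_G(y)$.
\item Otherwise, Lemma~\ref{lem:ShortK_Path}(b) says that $\overline{\varphi}(u)$ meets $M$ in at most one color. Hence $\Delta-d_G(u)\le (\Delta-|M|)+1$, which gives $d_G(u)\ge 2\Delta+1-d_G(x)-d_G(y)$.
\end{itemize}
The same argument with $x$ and $y$ interchanged gives the symmetric statement.

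There is no real obstacle here; the only point needing care is the bookkeeping. The uncolored edge $e$ contributes the extra $+1$ to the missing sets at $x$ and $y$. The Kierstead path must be oriented to start at $x$ so that Lemma~\ref{lem:ShortK_Path} applies with $y_0=x$, $y_1=y$, $y_2=z$, $y_3=u$. This is what matches the case condition $\min\{d_G(y),d_G(z)\}$ in (a).
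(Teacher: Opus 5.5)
Your proof is correct and follows the paper's argument. You use the multi-fan at $y$ built from $e$ and the edges of $F_y(\varphi)$, with Lemma~\ref{lem:multi_fan}(a), to make $\{x,y\}\cup Z_y(\varphi)$ elementary, which gives (b). You then use the Kierstead path $(x,e,y,f,z,zu,u)$ with Lemma~\ref{lem:ShortK_Path}(a) and (b) to get the two degree bounds in (a), exactly as the paper does; the only cosmetic difference is that you invoke the small fan $(e,x,f,z)$ for the elementarity of $\{x,y,z\}$, whereas the paper reads it off the full fan.
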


\begin{proof}
Let $\overline{\varphi}(x)=\{\alpha_1,\dots,\alpha_p\}$, where $p = \Delta+1-d_G(x)$. By the definition of $F_y(\varphi)$, we have $\varphi(F_y(\varphi)) =\overline{\varphi}(x)$. Hence, for each $i\in [p]$, let $e_i=yy_i$ be the unique edge in $F_y(\varphi)$ with $\varphi(e_i)=\alpha_i$. We note that the vertices $y_i$ are not necessarily distinct. Then
\[
F=(e,x,e_1,y_1,\dots,e_p,y_p)
\]
forms a multi-fan at $y$ with respect to $e$ and $\varphi$. Clearly, $Z_y(\varphi) \subseteq V(F)$.
By Lemma~\ref{lem:multi_fan}$(a)$, $Z_y(\varphi) \cup \{x, y\}$ is $\varphi$-elementary, and thus, for all $i \in [p]$,
    \begin{equation}\label{eq:disjoint}
         \overline{\varphi}(y_i) \cap (\overline{\varphi}(x) \cup \overline{\varphi}(y)) = \emptyset,
    \end{equation}
    and
    \begin{equation}\label{eq:sum}
        \sum_{z\in Z_y(\varphi)}|\overline{\varphi}(z)|+|\overline{\varphi}(x)|+|\overline{\varphi}(y)|\le \Delta.
    \end{equation}

   We first show $(a)$ using~\eqref{eq:disjoint}.
   It follows that, for all $i\in [p]$, $y_i$ is incident with $|\overline{\varphi}(x)|+|\overline{\varphi}(y)| = 2\Delta+2-d_G(x)-d_G(y)$ edges colored with colors missing at $x$ or $y$. For any such edge $f = y_iu$ with $u \notin \{x,y\}$, $K = (x, e, y, e_i, y_i, f, u)$ forms a Kierstead path on four vertices with respect to $\varphi$ and $e$. Applying Lemma~\ref{lem:ShortK_Path} to $K$, we have $$
    |\overline{\varphi}(u) \cap (\overline{\varphi}(x) \cup \overline{\varphi}(y))| \le 1,
    $$ 
    that is, $d_G(u) \ge 2\Delta + 1 - d_G(x)-d_G(y)$. 
    Moreover, if $\min\{d_G(y), d_G(y_i)\}< \Delta$, then $V(K)$ is $\varphi$-elementary, which implies $d_G(u) \ge 2\Delta + 2 - d_G(x) - d_G(y)$. So, $(a)$ holds. 
    
    Next we show $(b)$ using~\eqref{eq:sum}. Substituting $|\overline{\varphi}(z)| = \Delta - d_G(z)$ for $z \in Z_y(\varphi)$, $|\overline{\varphi}(x)|= \Delta - d_G(x)+1$, and $|\overline{\varphi}(y)| = \Delta-d_G(y)+1$ into~\eqref{eq:sum}, we obtain
   $$
   \Delta|Z_y(\varphi)|-\sum_{z\in Z_y(\varphi)}d_G(z)+2\Delta+2-d_G(x)-d_G(y) \le \Delta,
   $$
   which implies
   $$
   \sum_{z\in Z_y(\varphi)}d_G(z) \ge \Delta|Z_y(\varphi)|+\Delta+2-d_G(x)-d_G(y),
   $$ as desired.
\end{proof}

In the proof of Theorem~\ref{thm:avg-degree-2-8}, we will not use Lemma~\ref{lem:degree}, but only the following corollary of it.

\begin{corollary}\label{cor:degree}
Let $G$ be a $\Delta$-critical graph and $x, y \in V(G)$ with $E_G(x, y) \ne \emptyset$. Then, there exists a nonempty set $Z_y\subseteq N_G(y)\setminus\{x\}$ with $|E_G(y, Z_y)|\ge \Delta+1-d_G(x)$ such that:
\begin{enumerate}[label=(\alph*)]
    \item For each $z\in Z_y$, $z$ is incident with at least $2\Delta+2-d_G(x)-d_G(y)$ edges $zu$, and for any such edge $zu$ with 
    $u\notin\{x,y\}$, we have
    \[
    d_G(u) \ge 
    \begin{cases}
    2\Delta+2-d_G(x)-d_G(y) & \text{if } \min\{d_G(y), d_G(z)\}<\Delta,\\[4pt]
    2\Delta+1-d_G(x)-d_G(y) & \text{if } \min\{d_G(y), d_G(z)\}=\Delta;
    \end{cases}
    \]
    \item 
    \[
    \sum_{z\in Z_y} d_G(z) \ge \Delta|Z_y|+\Delta+2-d_G(x)-d_G(y).
    \]
\end{enumerate}
Similarly, there exists a nonempty set 
$Z_x \subseteq N_G(x)\setminus\{y\}$ with $|E_G(x, Z_x)| \ge \Delta+1-d_G(y)$ satisfying the same conclusions with the roles of $x$ and $y$ interchanged.
\end{corollary}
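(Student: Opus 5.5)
The plan is to derive Corollary~\ref{cor:degree} directly from Lemma~\ref{lem:degree} by fixing one coloring. Since $G$ is $\Delta$-critical and $e\in E_G(x,y)$ exists, $G-e$ is a proper subgraph, so $\chi'(G-e)\le\Delta$. Hence there is some $\varphi\in\mathcal{C}^{\Delta}(G-e)$. We fix such a $\varphi$ and set $Z_y:=Z_y(\varphi)$. The containment $Z_y\subseteq N_G(y)\setminus\{x\}$ holds by definition, because $F_y(\varphi)$ excludes edges between $x$ and $y$.

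Next we check the size bound and nonemptiness. As observed before Lemma~\ref{lem:degree}, we have $\varphi(F_y(\varphi))=\overline{\varphi}(x)$, and each edge of $F_y(\varphi)$ joins $y$ to a vertex of $Z_y$. Therefore
\[
|E_G(y,Z_y)|\ge |F_y(\varphi)|=|\overline{\varphi}(x)|=\Delta+1-d_G(x).
\]
For nonemptiness we need $\Delta+1-d_G(x)\ge 1$, which holds since $d_G(x)\le\Delta$. So $F_y(\varphi)\neq\emptyset$, and hence $Z_y\neq\emptyset$.

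Parts (a) and (b) are then exactly Lemma~\ref{lem:degree}(a) and (b) applied to this $\varphi$, with $Z_y(\varphi)$ replaced by $Z_y$. The statement for $Z_x$ follows in the same way from the symmetric version of Lemma~\ref{lem:degree}, using the same coloring $\varphi$ (or any coloring of $G-e$).

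There is no real obstacle. The only point needing care is that $\varphi$ exists and that $|\overline{\varphi}(x)|\ge 1$, since this is what makes $Z_y$ nonempty.
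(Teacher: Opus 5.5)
Your proposal is correct and matches how the paper gets this corollary: fix any $\varphi\in\mathcal{C}^{\Delta}(G-e)$ (which exists by criticality), take $Z_y=Z_y(\varphi)$, and read off (a) and (b) from Lemma~\ref{lem:degree}, with nonemptiness coming from $|\overline{\varphi}(x)|=\Delta+1-d_G(x)\ge 1$. The paper leaves this derivation implicit, and you supply the small checks it omits (existence of $\varphi$, the bound $|E_G(y,Z_y)|\ge|F_y(\varphi)|$, and nonemptiness).
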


For an ordered pair $(x,y)$ of distinct vertices in a graph $G$, the \emph{Kierstead set} of $(x,y)$,
introduced by Stiebitz et al.~\cite{SSTF2012GraphEC}, is
$$
K_G(x,y)=\{z\in N_G(y)\setminus\{x\} :  d_G(x)+d_G(y)+d_G(z)\ge2\Delta(G)+2\},
$$ 
and the corresponding \emph{Kierstead number} is $\sigma_G(x,y)=|K_G(x,y)|$. We note that if $G$ is $\Delta$-critical and $E_G(x, y) \ne \emptyset$, then for any $e \in E_G(x,y)$ and $\varphi \in \mathcal{C}^{\Delta}(G-e)$, $Z_y(\varphi)$ and $Z_x(\varphi)$ are contained in $K_G(x,y)$ and $K_G(y,x)$, respectively.
The Kierstead number for simple graphs was first defined and studied by Woodall~\cite{woodall2007average}.  
We recall the following result. 
\begin{lemma}[Woodall~\cite{woodall2007average}]
    Let $G$ be a $\Delta$-critical simple graph and $xy \in E(G)$. Then there are at least $\Delta-\sigma(x,y) \ge \Delta-d_G(y)+1$ vertices $z\in K_G(y,x)$ such that $\sigma(x,y)+\sigma(x,z) \ge 2\Delta-d_G(x)$.
\end{lemma}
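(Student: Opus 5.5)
This is Woodall's lemma for simple $\Delta$-critical graphs, and I would reprove it with recolouring arguments in the style of Lemma~\ref{lem:multi_fan} and Lemma~\ref{lem:ShortK_Path}. Fix $xy\in E(G)$ and $\varphi\in\mathcal{C}^{\Delta}(G-xy)$. Since $G$ is simple, each colour present at $x$ labels a unique neighbour of $x$. The goal is to find many neighbours $z$ of $x$ lying in $K_G(y,x)$ (that is, $d(y)+d(x)+d(z)\ge 2\Delta+2$) for which some neighbour of $z$ relates to $\sigma(x,y)$ and $\sigma(x,z)$ through a Kierstead path.

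\textbf{Step 1: the count $\Delta-\sigma(x,y)\ge\Delta-d_G(y)+1$.} The set $K_G(x,y)\subseteq N(y)\setminus\{x\}$ has size at most $d(y)-1$, so $\sigma(x,y)\le d(y)-1$. This gives the inequality. The substantive claim is the existence of $\Delta-\sigma(x,y)$ good vertices $z$.

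\textbf{Step 2: candidate vertices.} For each colour $\alpha$, consider the $\alpha$-edge at $x$, if it exists. For $\beta\in\overline{\varphi}(y)$, Lemma~\ref{lem:multi_fan}(b) says the $(\alpha,\beta)$-chain from $x$ ends at $y$ for every $\alpha\in\overline{\varphi}(x)$. I would use the standard Woodall counting argument, which runs over colours rather than vertices. Call a colour $\gamma$ \emph{bad} if it is present at $y$ on an edge $yw$ with $w\in K(x,y)$. There are at most $\sigma(x,y)$ bad colours, so at least $\Delta-\sigma(x,y)$ colours are good. Each good colour $\gamma$ should produce a distinct $z$: either $z=y$'s partner through a shift (Vizing-fan recolouring), or $z$ the $\gamma$-neighbour of $x$.

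\textbf{Step 3: each good $z$ satisfies the inequality.} For such a $z$, recolour along a fan at $x$ so that $xz$ becomes the uncoloured edge. Then $\overline{\varphi'}(x)\cup\overline{\varphi'}(z)$ has size $2\Delta+2-d(x)-d(z)$. By Lemma~\ref{lem:ShortK_Path}(c) applied to Kierstead paths $(z,x,y',\cdot)$, I would show that every colour in $\varphi'(x)$ falls into one of two kinds: it leads to a vertex of $K(x,z)$, or it corresponds to a vertex of $K(x,y)$ via the earlier colouring. Each $w\in N(x)$ should be counted in at least one of these, giving $\sigma(x,y)+\sigma(x,z)\ge d(x)-1+(\Delta-d(x)+1)\cdots$. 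The target is exactly $2\Delta-d(x)$, obtained by adding the $\Delta-d(x)$ missing colours at $x$, each of which also certifies a vertex through elementarity.

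\textbf{Main obstacle.} The hardest step is Step 3: showing that the two Kierstead sets together cover enough colours, which requires careful chain swaps to handle the case $d(y)=\Delta$ or $d(z)=\Delta$, where Lemma~\ref{lem:ShortK_Path}(a) fails. I would first try to force the case $\min\{d(x),d(z)\}<\Delta$ (for instance, $d(x)<\Delta$ unless $\overline{\varphi}(x)$ consists only of the uncoloured edge's missing colour). I would then handle the residual case with Lemma~\ref{lem:ShortK_Path}(b), which still gives that at most one colour is shared.
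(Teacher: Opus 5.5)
You have the right skeleton: it matches the structure of the paper's proof of Lemma~\ref{lem: sigma}, whose case $\mu=1$ is exactly this statement. However, the recolouring steps that carry all the content are only asserted, and you have misidentified the main obstacle.

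In Step~2 you need two facts you do not supply. First, every colour of $\overline{\varphi}(x)$ is bad, because it lies on an edge from $y$ to $Z_y(\varphi)\subseteq K_G(x,y)$. Hence all $\Delta-\sigma(x,y)$ good colours are present at $x$ and give distinct neighbours $x_\gamma$; without this, your ``if it exists'' could lose count. Second, for a good $\gamma\in\varphi(y)$ with $u=y_\gamma\notin K_G(x,y)$, you must produce $\varphi'\in\mathcal{C}^{\Delta}(G-xy)$ with $\gamma\in\overline{\varphi'}(y)$ and $xx_\gamma$ still coloured $\gamma$. This works as follows. Since $d(x)+d(y)+d(u)<2\Delta+2$ and $|\overline{\varphi}(x)\cup\overline{\varphi}(y)|=2\Delta+2-d(x)-d(y)$, counting gives a colour of $\overline{\varphi}(x)\cup\overline{\varphi}(y)$ missing at $u$ (Lemma~\ref{prop: 26}(b)). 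Recolour $yu$ with it; if that colour lies in $\overline{\varphi}(x)$, first swap the $(\alpha,\beta)$-chain joining $x$ and $y$, which avoids $u$. Only then does $x_\gamma\in Z_x(\varphi')\subseteq K_G(y,x)$ follow (Lemma~\ref{prop: 27}). A ``Vizing-fan shift'' producing ``$y$'s partner'' does not even give a neighbour of $x$.

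Step~3 is the heart of Woodall's lemma, and your plan does not reach it. Fix $\varphi\in\mathcal{C}^{\Delta}(G-xy)$ with $\alpha=\varphi(xz)\in\overline{\varphi}(y)$. Each colour $\gamma\in\overline{\varphi}(x)$ contributes twice: $y_\gamma\in Z_y(\varphi)$, and $z_\gamma\in K(x,z)$ by Lemma~\ref{prop: 25}. What must be shown is that every $\beta\in\varphi(x)\setminus\{\alpha\}$ has $y_\beta\in K(x,y)$ or $z_\beta\in K(x,z)$. The count is then $2(\Delta-d(x)+1)+(d(x)-2)=2\Delta-d(x)$. Your tally is off: in your $\varphi'\in\mathcal{C}^{\Delta}(G-xz)$ the colour on $xy$ contributes nothing, and $x$ misses $\Delta-d(x)+1$ colours, not $\Delta-d(x)$.

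If $\beta$ is missing at $y$ or at $z$, a Kierstead path exists after at most a shift of the uncoloured edge, and Lemma~\ref{lem:ShortK_Path}(c) finishes (Claim~\ref{claim:1}). If instead $\beta\in\varphi(x)\cap\varphi(y)\cap\varphi(z)$, neither $(y,x,z,z_\beta)$ nor $(z,x,y,y_\beta)$ satisfies (K2), so Lemma~\ref{lem:ShortK_Path} cannot be applied, and this is where the real work lies. Assuming both conclusions fail, degree counting plus further recolouring produces $\gamma\in\overline{\varphi}(x)\cap\overline{\varphi}(z_\beta)$ and $\gamma'\in\overline{\varphi}(x)\cap\overline{\varphi}(y_\beta)$ (Claim~\ref{claim:2}). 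After normalising so that $\alpha\in\overline{\varphi}(y_\beta)$, a sequence of recolourings and Kempe changes ($\varphi_1,\dots,\varphi_4$) yields a colouring in which $\beta$ is missing at $x$, $zz_\beta$ is still coloured $\beta$, and $xz$ carries a colour missing at $y$. Then $(y,x,z,z_\beta)$ becomes a Kierstead path, forcing $z_\beta\in K(x,z)$, a contradiction (Claim~\ref{claim:3}).

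The obstacle you named is not the issue. Membership in a Kierstead set uses only the degree-sum bound in Lemma~\ref{lem:ShortK_Path}(c), which holds with no degree hypothesis. So the cases $d(y)=\Delta$ or $d(z)=\Delta$, parts (a) and (b) of that lemma, and any attempt to force $\min\{d(x),d(z)\}<\Delta$ are beside the point.
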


To deal with multigraphs, we introduce a modified version of the Kierstead number by counting edges instead of vertices, defined as follows.
$$
\sigma'_G(x, y) = |E_G(y,K_G(x,y))|.
$$
For simple graphs, $\sigma'_G(x,y) = \sigma_G(x,y)$. 
The following lemma generalizes Woodall’s result and is key to the proof of the second part of Theorem~\ref{THM-General}. Due to its length, the proof is deferred to Section~\ref{sec:prooflemma}.

\begin{lemma}\label{lem: sigma}
Let $G$ be a $\Delta$-critical graph with multiplicity $\mu$ and $x, y\in V(G)$ with $E_G(x,y)\neq\emptyset$. Then, there exists a set $Z \subseteq K_G(y,x)$ with
$|E_G(x, Z)| \ge \Delta-\sigma'(x,y)-\mu_G(xy)+1 \ge \Delta-d_G(y)+1$ such that, for every $z\in Z$, $\sigma'(x,y)+\sigma'(x,z) \ge 2\Delta-d_G(x)+2-\mu_G(xy)-\mu_G(xz) \ge 2\Delta-d_G(x)-2\mu+2$.
\end{lemma}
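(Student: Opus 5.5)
The plan is to follow Woodall's argument for simple graphs, but to count edges and colours instead of vertices. The parallel edges between $x$ and $y$ (and between $x$ and $z$) are absorbed by the terms $\mu_G(xy)$ and $\mu_G(xz)$.

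\emph{Choice of $Z$.} Fix $e\in E_G(x,y)$ and $\varphi\in\mathcal{C}^\Delta(G-e)$. Let $A$ be the set of colours on $E_G(y,K_G(x,y))$ together with the colours on $E_G(x,y)\setminus\{e\}$, so that $|A|\le \sigma'(x,y)+\mu_G(xy)-1$. Let $B=[\Delta]\setminus A$; then $|B|\ge \Delta-\sigma'(x,y)-\mu_G(xy)+1$. Every colour of $\overline{\varphi}(x)$ is carried at $y$ by an edge of $F_y(\varphi)$. Since $Z_y(\varphi)\subseteq K_G(x,y)$, we get $\overline{\varphi}(x)\subseteq A$. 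Hence every $\beta\in B$ is present at $x$, on a unique edge $f_\beta=xz_\beta$ with $z_\beta\ne y$. Set $Z=\{z_\beta:\beta\in B\}$.

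This gives $|E_G(x,Z)|\ge|B|$, which is the first claimed bound. The second inequality $\Delta-\sigma'(x,y)-\mu_G(xy)+1\ge\Delta-d_G(y)+1$ is immediate. Indeed, $E_G(y,K_G(x,y))$ and $E_G(x,y)$ are disjoint sets of edges at $y$, so $\sigma'(x,y)+\mu_G(xy)\le d_G(y)$.

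\emph{Showing $Z\subseteq K_G(y,x)$.} Take $\beta\in B$ and write $z=z_\beta$.
\begin{itemize}
\item If $\beta\in\overline{\varphi}(y)$, then $(e,y,f_\beta,z)$ is a multi-fan at $x$. Lemma~\ref{lem:multi_fan}(a) then gives $d_G(x)+d_G(y)+d_G(z)\ge 2\Delta+2$.
\item Otherwise $\beta$ lies on an edge $yw$ with $w\notin K_G(x,y)\cup\{x\}$. Take $\alpha\in\overline{\varphi}(x)$. Lemma~\ref{lem:multi_fan}(b) says the $(\alpha,\beta')$-chains from $y$ end at $x$ for every $\beta'\in\overline{\varphi}(y)$. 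I would perform a Kempe change on an $(\alpha,\beta)$- or $(\beta',\beta)$-chain avoiding $x$ and $y$, producing a new colouring $\varphi'$ in which $\beta$ is missing at $y$ and still sits on $f_\beta$.
\end{itemize}
This reduces to the first case, so $z\in K_G(y,x)$.

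\emph{The inequality for $\sigma'(x,y)+\sigma'(x,z)$.} Pass to the colouring $\psi$ of $G-f_\beta$ obtained from $\varphi$ (or $\varphi'$) by colouring $e$ with $\beta$ and uncolouring $f_\beta$.
\begin{itemize}
\item Apply Corollary~\ref{cor:degree} to the pair $(x,z)$ under $\psi$. The edges at $z$ carrying colours of $\overline{\psi}(x)$ go to $Z_z(\psi)\subseteq K_G(x,z)$. Since $|\overline{\psi}(x)|=\Delta+1-d_G(x)$, these contribute at least $\Delta+1-d_G(x)$ to $\sigma'(x,z)$.
\item The remaining colours present at $x$ are $\Delta+1-d_G(x)$ fewer than $\Delta$. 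Each such colour $\gamma\notin\overline{\varphi}(x)$, other than those on $E_G(x,y)\cup E_G(x,z)$, should be shown to lie either on an edge from $y$ into $K_G(x,y)$ under $\varphi$, or on an edge from $z$ into $K_G(x,z)$ under $\psi$. For this I would use the Kierstead paths $(x,e,y,\cdot,\cdot)$ and $(x,f_\beta,z,\cdot,\cdot)$ together with Lemma~\ref{lem:ShortK_Path}(b),(c). The excluded colours cost at most $\mu_G(xy)-1+\mu_G(xz)-1$ in total.
\end{itemize}
Adding the two counts gives $\sigma'(x,y)+\sigma'(x,z)\ge 2\Delta-d_G(x)+2-\mu_G(xy)-\mu_G(xz)$. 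The final bound then follows from $\mu_G(xy),\mu_G(xz)\le\mu$.

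\emph{Main obstacle.} I expect the hardest part to be this last covering argument, together with the Kempe-change reduction in the second bullet of the previous step. One must show that a colour appearing on neither $E_G(y,K_G(x,y))$ nor $E_G(z,K_G(x,z))$ yields a $\Delta$-colouring of $G$, and this must be done with multigraph fans rather than Vizing fans. The approach I would try first is to assume such a colour $\gamma$ exists, and swap along the $(\beta,\gamma)$-chain through $y$, or through $z$, so as to make $\gamma$ or $\beta$ missing at both ends of an uncoloured edge. Lemma~\ref{lem:multi_fan}(b) would then be violated, giving the contradiction.
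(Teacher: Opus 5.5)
Your set-up matches the paper. Your $B$ is exactly the colour set $\Gamma$ in the proof of Lemma~\ref{prop: 27}, and your final double count (each colour of $\overline{\varphi}(x)$ counted at $y$ and at $z$, each other present colour at least once) is the paper's. But the two steps that carry the content are not supplied, and there is a gap in each.

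\textbf{First gap: the reduction to $\beta\in\overline{\varphi}(y)$ fails as described.} A Kempe change on a chain avoiding $y$ leaves the colours at $y$ unchanged, so it cannot make $\beta$ missing at $y$. The $(\beta',\beta)$-chain that does start at $y$ may run through $x$ and recolour $f_\beta$. More to the point, you never use the degree condition $w\notin K_G(x,y)$, which is what distinguishes the colours of $B$ from the excluded ones. The missing step is a pigeonhole count:
\[
|\overline{\varphi}(x)|+|\overline{\varphi}(y)|+|\overline{\varphi}(w)|=3\Delta+2-d_G(x)-d_G(y)-d_G(w)\ge\Delta+1 .
\]
Since $\overline{\varphi}(x)\cap\overline{\varphi}(y)=\emptyset$, the set $\overline{\varphi}(w)$ meets $\overline{\varphi}(x)\cup\overline{\varphi}(y)$. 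If $\delta\in\overline{\varphi}(w)\cap\overline{\varphi}(y)$, recolour $yw$ with $\delta$. If $\alpha\in\overline{\varphi}(w)\cap\overline{\varphi}(x)$, take $\beta'\in\overline{\varphi}(y)$ and swap the $(\alpha,\beta')$-chain joining $x$ and $y$ given by Lemma~\ref{lem:multi_fan}(b). This chain avoids $w$, since $w$ misses $\alpha$ and is not an end. Then recolour $yw$ with $\alpha$. This is Lemma~\ref{prop: 27}.

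\textbf{Second gap, more serious: the covering statement is only asserted.} The statement is that every $\gamma\in\varphi(x)$ not on $E_G(x,y)\cup E_G(x,z)$ has $y_\gamma\in K_G(x,y)$ or $z_\gamma\in K_G(x,z)$. This is the heart of the lemma, Claims~\ref{claim:1}--\ref{claim:3} in the paper.

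Your suggested $(\beta,\gamma)$-swap never engages the hypotheses $y_\gamma\notin K_G(x,y)$ and $z_\gamma\notin K_G(x,z)$, and these are what must be exploited. The four-vertex Kierstead paths that do the work do not start at $x$. They are $(y,e,x,f_\beta,z,zz_\gamma,z_\gamma)$, and its mirror $(z,f_\beta,x,e,y,yy_\gamma,y_\gamma)$ taken with respect to $f_\beta$. In both, $x$ is the second vertex, so Lemma~\ref{lem:ShortK_Path}(c) bounds $d_G(x)+d_G(z)+d_G(z_\gamma)$, respectively $d_G(x)+d_G(y)+d_G(y_\gamma)$. They are Kierstead paths only once $\gamma$ is missing at one of their first two vertices, and producing such colourings is the real work. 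The paper proceeds as follows.
\begin{enumerate}
\item If $\gamma$ is missing at $y$ or at $z$, one of these paths applies at once (Claim~\ref{claim:1}).
\item If $\gamma$ is present at $x,y,z$, the same pigeonhole count shows that $z_\gamma$ misses a colour of $\overline{\varphi}(x)\cup\overline{\varphi}(z)$; this uses that $\gamma$ is present at $x$, $z$ and $z_\gamma$. Likewise $y_\gamma$ misses a colour of $\overline{\varphi}(x)\cup\overline{\varphi}(y)$.
\item A colour shared with $\overline{\varphi}(z)$, resp.\ $\overline{\varphi}(y)$, lets you recolour $zz_\gamma$, resp.\ $yy_\gamma$. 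You then finish with the path through the other vertex.
\item This leaves $\delta\in\overline{\varphi}(x)\cap\overline{\varphi}(z_\gamma)$ and $\delta'\in\overline{\varphi}(x)\cap\overline{\varphi}(y_\gamma)$. Further Kempe changes ($\varphi_1,\dots,\varphi_4$ in Claim~\ref{claim:3}) then give a colouring of $G-e$ in which $\gamma$ is missing at $x$ and $\beta$ is missing at $y$, while $f_\beta$ and $zz_\gamma$ are still coloured $\beta$ and $\gamma$.
\item Now $(y,e,x,f_\beta,z,zz_\gamma,z_\gamma)$ is a Kierstead path, which contradicts $z_\gamma\notin K_G(x,z)$.
\end{enumerate}

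\textbf{Bookkeeping.} As written, your count totals only $\Delta+2-\mu_G(xy)-\mu_G(xz)$. You must also count the $\Delta+1-d_G(x)$ edges from $y$ to $Z_y(\varphi)\subseteq K_G(x,y)$. In addition, the excluded present colours number $\mu_G(xy)-1+\mu_G(xz)$, because $\beta$ itself lies on $E_G(x,z)$.
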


We now focus on some basic structural properties of $\Delta$-critical graphs.
\begin{observation}\label{obs:critical}
    Every $\Delta$-critical graph $G$ is $2$-connected. In particular, its underlying simple graph is bridgeless and has minimum degree at least $2$.
\end{observation}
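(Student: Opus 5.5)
We prove both assertions of Observation~\ref{obs:critical} directly from criticality. The tool is a gluing argument: colorings of pieces of $G$ are combined after permuting colors. We first note that $G$ has no isolated vertices and is connected. Indeed, if $G$ were the disjoint union of two nonempty subgraphs $G_1,G_2$ each containing an edge, then by criticality $\chi'(G_i)\le\Delta$ for $i=1,2$. The union of a $\Delta$-coloring of $G_1$ and one of $G_2$ is then a $\Delta$-coloring of $G$, a contradiction.

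Next we rule out a cut vertex. Suppose $v$ is a cut vertex, so $G=G_1\cup G_2$ with $V(G_1)\cap V(G_2)=\{v\}$ and each $G_i$ a proper subgraph containing edges. By criticality there are $\varphi_i\in\mathcal{C}^\Delta(G_i)$. The sets of colors used at $v$ have sizes $d_{G_1}(v)$ and $d_{G_2}(v)$, and these sum to $d_G(v)\le\Delta$. We permute the colors of $\varphi_2$ so that its colors at $v$ avoid those of $\varphi_1$ at $v$; this is possible exactly because $d_{G_1}(v)+d_{G_2}(v)\le\Delta$. The only vertex shared by the two pieces is $v$, so the union is a proper $\Delta$-coloring of $G$, a contradiction. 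Hence $G$ is $2$-connected. If $|V(G)|=2$, then $G$ is a bundle of $\Delta$ parallel edges and is $\Delta$-colorable, so this case cannot occur for a critical graph. Thus $|V(G)|\ge3$, and $2$-connectivity is meaningful.

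The "in particular" part follows quickly. Suppose the underlying simple graph had a bridge $uv$, with the parallel class $E_G(u,v)$ of size $m$ separating $G$ into parts $A\ni u$ and $B\ni v$. At least one side has another vertex, since $|V(G)|\ge3$. Then $u$ or $v$ is a cut vertex of $G$, contradicting $2$-connectivity. Similarly, a vertex with only one neighbor $w$ in the underlying simple graph makes $w$ a cut vertex when $|V(G)|\ge3$. So the underlying graph has minimum degree at least $2$.

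The only step needing care is the color permutation at the cut vertex, and the inequality $d_{G_1}(v)+d_{G_2}(v)\le\Delta$ handles it completely. Alternatively, the bridge case can be done directly. Color $G-E_G(u,v)$ side by side. The edges of $E_G(u,v)$ need $m$ colors missing at both $u$ and $v$. At $u$ at least $m$ colors are missing on the $A$-side, and likewise at $v$ on the $B$-side. After permuting the colors on the $B$-side, these missing sets coincide on $m$ colors, giving the same contradiction.
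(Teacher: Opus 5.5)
Your proposal is correct and follows essentially the paper's argument. Like the paper, you take $\Delta$-colorings of the pieces meeting at a cut vertex $v$ and permute colors so that the color sets at $v$ are disjoint, which is possible because $d_G(v)\le\Delta$. You also spell out details the paper leaves implicit: connectivity, $|V(G)|\ge 3$, and the deduction of the bridgeless and minimum-degree statements.
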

\begin{proof}
    Clearly, $\Delta \ge 2$ and $|G|\ge 3$. 
    Suppose that $G$ has a cut vertex $v$, and that $G-v$ has components $G_1,\dots,G_{\ell}$, where $\ell \ge 2$. By the definition of $\Delta$-criticality, each subgraph $G[V(G_i)\cup\{v\}]$ is $\Delta$-edge-colorable. For each $i\in[\ell]$, fix a $\Delta$-edge-coloring $\varphi_i$ of $G[V(G_i)\cup\{v\}]$. Since $\sum_{i=1}^{\ell} |\varphi_i(v)| = d_G(v) \le \Delta$, by permuting colors if necessary, we may assume that the sets $\varphi_1(v), \dots, \varphi_{\ell}(v)$ are pairwise disjoint. This yields a $\Delta$-edge-coloring of $G$, a contradiction.
\end{proof}

Beineke and Fiorini~\cite{Beineke1976} proved that a $\Delta$-critical simple graph is not regular for $\Delta \ge 3$. The condition $\Delta \ge 3$ is necessary since every odd cycle is $2$-regular and $2$-critical. However, the situation for multigraphs is more subtle, since the chromatic index of a multigraph is not restricted to the values $\Delta$ and $\Delta + 1$. In fact, an $r$-critical multigraph with $r > \Delta$ may be regular. For example, let $G$ be the multigraph obtained from $K_3$ by replacing each edge with $k$ parallel edges, where $k \ge 2$. It is easy to see that $G$ is $(3k-1)$-critical and $(2k)$-regular.

Nevertheless, the same conclusion holds for $\Delta$-critical graphs.

\begin{lemma}\label{lem:no-regular}
    Every $\Delta$-critical graph $G$ with $\Delta \ge 3$ is not regular.
\end{lemma}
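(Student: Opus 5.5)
Suppose for contradiction that $G$ is $\Delta$-critical, $\Delta\ge 3$, and $\Delta$-regular. The plan is to apply Lemma~\ref{lem:multi_fan}(c) to a maximal multi-fan and show that regularity forces the fan to consist of a single neighbor joined to $y_0$ by exactly two edges, and then to rule this out.

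Fix an edge $e_1\in E_G(y_0,y_1)$ and $\varphi\in\mathcal{C}^{\Delta}(G-e_1)$, and let $F$ be a maximal multi-fan at $y_0$ with respect to $e_1$ and $\varphi$. Since $d_G(z)=\Delta$ for all $z$, Lemma~\ref{lem:multi_fan}(c) becomes
\[
\sum_{z\in V(F)}\mu_F(y_0z)=2 .
\]
Each $z\in V(F)$ has $\mu_F(y_0z)\ge1$, and $|V(F)|\ge 2$ by Lemma~\ref{lem:multi_fan}(c). Hence $|V(F)|=2$ and each vertex of $F$ is joined to $y_0$ by exactly one fan edge. However, $y_1\in V(F)$ by definition, so $V(F)=\{y_1,y_2\}$ with only $e_1,e_2$ in $F$.

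Now I derive a contradiction from maximality. In $G-e_1$ we have $|\overline{\varphi}(y_1)|=|\overline{\varphi}(y_0)|=1$ and $|\overline{\varphi}(y_2)|=0$. Let $\overline{\varphi}(y_1)=\{\beta\}$; then $e_2$ is the edge at $y_0$ colored $\beta$, which exists since $\beta\notin\overline{\varphi}(y_0)$ by Lemma~\ref{lem:multi_fan}(a). Maximality says no other edge at $y_0$ carries a color in $\overline{\varphi}(y_1)\cup\overline{\varphi}(y_2)=\{\beta\}$, which is consistent. So I need more. Swap roles: consider $\varphi'$ obtained by recoloring $e_2$ with the color... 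Instead use the count more cleverly. Lemma~\ref{lem:multi_fan}(c) applied to the maximal fan gives only sum $2$, and this single-vertex-count configuration is genuinely consistent in simple odd cycles, so the argument must use $\Delta\ge3$. The cleaner route is via Lemma~\ref{lem:ShortK_Path}(c): if $y_2$ has any neighbor $u\notin\{y_0,y_1\}$ via an edge of color in $\overline{\varphi}(y_0)\cup\overline{\varphi}(y_1)$, then $(y_0,e_1,y_1,\cdot)$ or the path $(y_1,e_1,y_0,e_2,y_2,f,u)$ is a Kierstead path on four vertices, giving $d(y_0)+\dots\ge 3\Delta+1$ with equality only if some degree equals $\Delta$. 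This is satisfied under regularity ($4\Delta\ge3\Delta+1$), so it gives no contradiction either; I must instead use part (b)/(a) to force parallel structure.

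The main step, which I expect to be the obstacle, is forcing multiplicities. I would argue: by Lemma~\ref{lem:multi_fan}(b) the $(\alpha,\beta)$-chain from $y_0$ (with $\overline{\varphi}(y_0)=\{\alpha\}$) ends at $y_1$. I would then look at the edge of color $\alpha$ at $y_1$, which is not $e_1$. If it goes to $y_0$, then maximality is violated unless it is among $e_1,e_2$, which it is not. Hence it goes to some $w$. Then choose the multi-fan at $y_1$ (symmetric roles of $y_0,y_1$) and apply the same $\sum=2$ conclusion. Combining both fans with the Kierstead path $(y_0,e_1,y_1,f,w)$ and Lemma~\ref{lem:ShortK_Path}(b), I would show the coloring can be shifted (recolor $e_2$ by $\alpha$, color $e_1$ by $\beta$ after a Kempe swap along an $(\alpha,\beta)$-chain not ending at $y_0$) to produce a $\Delta$-coloring of $G$, a contradiction. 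If this local recoloring fails, the fallback is a parity argument: in a $\Delta$-regular graph with $\chi'=\Delta+1$ and criticality, each $G-e$ has a $\Delta$-coloring whose classes are near-perfect matchings, so $|V(G)|$ is odd and every color class in $G-e$ misses exactly one vertex except two classes that miss the ends of $e$. Counting missing colors, each color misses an odd number of vertices, so $\overline{\varphi}(y_0)=\overline{\varphi}(y_1)$ unless $\Delta$ colors each miss one vertex, giving $\Delta=\sum_v|\overline{\varphi}(v)|=2$. This contradicts $\Delta\ge3$, and it is the argument I would actually write up.
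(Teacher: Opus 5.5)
Your final parity argument is, in substance, the paper's proof. In $G-e$ only the two ends of $e$ miss a color, and they miss distinct colors, so $|V(G)|$ is odd; then $\Delta\ge 3$ color classes cannot coexist. The paper says a third class would be a perfect matching of an odd-order graph. Your count, ``each of the $\Delta$ classes misses at least one vertex, but only two vertex--color misses exist,'' is the same observation.

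As written, though, the key step is stated incorrectly. It is false that the classes of $\varphi$ are near-perfect matchings, and false that every class misses exactly one vertex. Since $\sum_v|\overline{\varphi}(v)|=2$, only the classes of $\alpha\in\overline{\varphi}(y_0)$ and $\beta\in\overline{\varphi}(y_1)$ miss anything, and all other classes are perfect matchings of $G-e$. So you cannot obtain ``$|V(G)|$ is odd'' from that claim. You must derive it from $\alpha\neq\beta$, which holds by Lemma~\ref{lem:multi_fan}(a), or simply because otherwise $e$ could be colored $\alpha$. This makes $\varphi^{-1}(\alpha)$ miss exactly the single vertex $y_0$.

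Your sentence ``each color misses an odd number of vertices, so $\overline{\varphi}(y_0)=\overline{\varphi}(y_1)$ unless \dots'' conflates the two parity cases. The clean version runs as follows. Every class misses a number of vertices congruent to $|V(G)|$ modulo $2$. The class of $\alpha$ misses exactly one vertex, so $|V(G)|$ is odd. Therefore each of the $\Delta$ classes misses at least one vertex, giving $\Delta\le\sum_v|\overline{\varphi}(v)|=2$ (an inequality, not the equality you wrote), which contradicts $\Delta\ge 3$.

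Drop the multi-fan and Kierstead-path exploration from the write-up. As you noticed, under regularity Lemma~\ref{lem:multi_fan}(c) only yields a two-vertex fan with one edge to each vertex. The degree inequalities in Lemma~\ref{lem:ShortK_Path}(c) hold automatically when all degrees equal $\Delta$. None of that produces a contradiction; the obstruction is the global parity count above.
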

\begin{proof}
Suppose for a contradiction that $G$ is regular. Let $e \in E_G(x,y)$ for some $x, y \in V(G)$, and let $\varphi \in \mathcal{C}^{\Delta}(G-e)$. Then, $d_{G-e}(u)=\Delta-1$ for $u\in\{x,y\}$ and $d_{G-e}(u)=\Delta$ otherwise. Hence $|\overline{\varphi}(u)|=1$ for $u\in\{x,y\}$ and $0$ otherwise. Since $\overline{\varphi}(x)\cap \overline{\varphi}(y)=\emptyset$, write $\overline{\varphi}(x)=\{\alpha\}$ and $\overline{\varphi}(y)=\{\beta\}$ with $\alpha\ne\beta$. The color classes $\varphi^{-1}(\alpha)$ and $\varphi^{-1}(\beta)$ are near-perfect matchings of $G-e$, missing $x$ and $y$, respectively. Hence $|G|$ is odd. Since $\Delta \ge 3$, choose $\gamma \in [\Delta]\setminus\{\alpha, \beta\}$. Then, $\gamma$ is present at every vertex of $G-e$, so $\varphi^{-1}(\gamma)$ would be a perfect matching of the odd-order graph $G$, a contradiction.
\end{proof}

We conclude this section with some results relating density and average degree.

\begin{lemma}\label{lem:density}
    If $G$ is a graph with $\Gamma(G) = \overline{d}(G)$, then $|G|$ is even and $G$ is regular.
\end{lemma}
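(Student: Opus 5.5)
We prove the lemma by comparing $\overline{d}(G)$ with the single term of the maximum defining $\Gamma(G)$ obtained by taking $H=G$. Put $n=|G|\ge 2$ and $m=|E(G)|$, so that $\overline{d}(G)=2m/n$. Taking $H=G$ in the definition of density gives
\[
\Gamma(G)\ \ge\ \left\lceil \frac{m}{\lfloor n/2\rfloor}\right\rceil\ \ge\ \frac{m}{\lfloor n/2\rfloor}\ \ge\ \frac{m}{n/2}=\frac{2m}{n}=\overline{d}(G).
\]
The hypothesis $\Gamma(G)=\overline{d}(G)$ therefore forces equality at every step of this chain. From this we extract the two conclusions.

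\emph{Parity.} Equality $\frac{m}{\lfloor n/2\rfloor}=\frac{m}{n/2}$ with $m>0$ forces $\lfloor n/2\rfloor=n/2$, so $n$ is even. We do need $m>0$ here. If $G$ is edgeless, then $\Gamma(G)=0=\overline{d}(G)$, but $G$ is trivially regular (and in that case we simply take $n$ even as stated, or note that the lemma is only applied to graphs with edges). In the intended application $G$ is $\Delta$-critical, so $m>0$, and we will say so explicitly.

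\emph{Regularity.} Equality at the ceiling step shows that $2m/n$ is an integer, say $d$. Suppose $G$ is not regular. Then some vertex $v$ satisfies $d_G(v)<d$, hence $d_G(v)\le d-1$. We compare with $H=G-v$, which has $n-1$ vertices, where $n-1$ is odd and at least $3$ once $n\ge 4$. Since $\lfloor (n-1)/2\rfloor=(n-2)/2$,
\[
\frac{|E(G-v)|}{\lfloor (n-1)/2\rfloor}\ \ge\ \frac{m-(d-1)}{(n-2)/2}=\frac{nd/2-d+1}{(n-2)/2}=d+\frac{2}{n-2}>d.
\]
Hence $\Gamma(G)\ge d+1>\overline{d}(G)$, which is a contradiction. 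This requires $n-1\ge 2$, which holds because $n$ is even and $n\ge 2$ gives $n\ge 4$ whenever $n\ne 2$.

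\emph{The case $n=2$.} The case that needs care is $n=2$, where $G-v$ is a single vertex and cannot serve as $H$. But when $n=2$, every edge joins the only two vertices, so both have degree $m$ and $G$ is automatically regular.

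The only real work is choosing the right subgraph $H$. Deleting a vertex of degree below the average is natural: the resulting odd order makes the floor drop by exactly one, which turns the deficit into a strict excess.
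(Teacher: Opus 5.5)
Your proof is correct and is essentially the paper's argument. The $H=G$ term forces $|G|$ even, and deleting a vertex of below-average degree gives an odd-order subgraph $G-v$ whose ratio $\frac{2(m-d_G(v))}{n-2}$ strictly exceeds $\frac{2m}{n}$. The paper phrases this as equality-chasing with $v$ of minimum degree; your integrality step $d_G(v)\le d-1$ is harmless but unnecessary, since $d_G(v)<\frac{2m}{n}$ already gives the strict inequality.

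You also handle $n=2$ and $E(G)=\emptyset$ explicitly, which the paper glosses over. In the edgeless case the parity claim is genuinely false for odd $n$, so the right fix is to assume $E(G)\neq\emptyset$ rather than to ``take $n$ even.''
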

\begin{proof}
Suppose first that $|G|$ is odd. Then
\[
\Gamma(G) \ge \frac{|E(G)|}{\lfloor|G|/2\rfloor} = \frac{2|E(G)|}{|G|-1} > \frac{2|E(G)|}{|G|} = \overline{d}(G),
\]
which is a contradiction. Thus $|G|$ is even.

Next we show that $G$ is regular. Let $v$ be a vertex such that $d_G(v) = \delta(G)$ and set $H := G-v$. Clearly, $\Gamma(G) \ge \Gamma(H)$. Since $|H|$ is odd, we have
$\Gamma(H) \ge \frac{2|E(H)|}{|H|-1} = \frac{2(|E(G)|-\delta(G))}{|G|-2}$. We claim that 
$$
\frac{2(|E(G)|-\delta(G))}{|G|-2} \ge \frac{2|E(G)|}{|G|}=\overline{d}(G).
$$
Indeed, otherwise multiplying both sides by $|G|(|G|-2)$ yields $2|E(G)| < |G|\, \delta(G)$, that is, $\delta(G) > \frac{2|E(G)|}{|G|} = \overline{d}(G)$, a contradiction. Thus $\Gamma(G) \ge \Gamma(H) \ge \overline{d}(G)$. Since by hypothesis $\Gamma(G) = \overline{d}(G)$, equality holds throughout, and hence $\delta(G) = \overline{d}(G)$. Therefore $G$ is regular.
\end{proof}

The following is deduced from Lemmas~\ref{lem:no-regular} and~\ref{lem:density}.
\begin{corollary}\label{cor: density>degree}
    Every $\Delta$-critical graph $G$ with $\Delta \ge 3$ satisfies $\Gamma(G) > \overline{d}(G)$.
\end{corollary}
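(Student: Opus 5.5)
The corollary follows by combining Lemma~\ref{lem:no-regular} with Lemma~\ref{lem:density}, together with the general inequality $\Gamma(G)\ge \overline{d}(G)$.

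First, we establish $\Gamma(G)\ge \overline{d}(G)$ for every graph $G$ with at least two vertices. The argument is the one already used in the proof of Lemma~\ref{lem:density}. If $|G|$ is odd, take $H=G$:
\[
\Gamma(G)\ge \frac{|E(G)|}{\lfloor |G|/2\rfloor}=\frac{2|E(G)|}{|G|-1}>\overline{d}(G).
\]
If $|G|$ is even, first note that $\Gamma(G)\ge \frac{|E(G)|}{|G|/2}=\overline{d}(G)$ already, so nothing more is needed. (Deleting a minimum-degree vertex $v$ and using the odd-order subgraph $G-v$ gives the same bound, as in that proof.) The ceiling in the definition of $\Gamma$ only increases the quantity, so the inequality holds in all cases.

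Now let $G$ be $\Delta$-critical with $\Delta\ge 3$, and suppose for contradiction that $\Gamma(G)\le \overline{d}(G)$. By the first step, $\Gamma(G)=\overline{d}(G)$. Lemma~\ref{lem:density} then forces $G$ to be regular (and of even order). This contradicts Lemma~\ref{lem:no-regular}, which states that no $\Delta$-critical graph with $\Delta\ge3$ is regular. Hence $\Gamma(G)>\overline{d}(G)$.

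There is no real obstacle here. The only point needing care is stating explicitly that $\Gamma(G)\ge\overline{d}(G)$ always holds, so that the failure of strict inequality really means equality. This is immediate from taking $H=G$ in the definition of $\Gamma$ together with $\lfloor |G|/2\rfloor\le |G|/2$. Note also that $|G|\ge 3$ holds for $\Delta$-critical graphs by Observation~\ref{obs:critical}, so $\Gamma(G)$ is well defined.
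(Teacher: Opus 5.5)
Your proposal is correct and is essentially the paper's argument: the paper deduces the corollary directly from Lemma~\ref{lem:no-regular} and Lemma~\ref{lem:density}. Your explicit check that $\Gamma(G)\ge\overline{d}(G)$ (take $H=G$ and use $\lfloor |G|/2\rfloor\le |G|/2$) makes precise the step the paper leaves implicit, namely that failure of the strict inequality forces equality. One trivial quibble: your strict inequality in the odd case needs $|E(G)|>0$, which of course holds for a $\Delta$-critical graph.
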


\section{Proof of Theorem~\ref{THM-General}}\label{sec:general}
We begin with the proof of the general bound, and then consider the case when the multiplicity is relatively small.

\noindent \textbf{Proof of Theorem~\ref{THM-General}.}
   The proof uses a discharging method based on the vertex degrees of $G$. Let $q=\min\Big\{\tfrac{\Delta+\sqrt{2\Delta-1}}{2}, \tfrac{\sqrt{17}-3}{2}(\Delta+1)\Big\}$. Assign to each vertex $x$ of $G$ an initial charge $M(x)=d_G(x)$. For each vertex $y \in V(G)$, let $E_y$ be the set of edges $e \in E_G(x,y)$ with $M(x)< q$, and let $m_y :=|E_y|$. We redistribute the charge according to the following rule:
    \begin{itemize}
        \item \textbf{Rule.} Each vertex $y \in V(G)$ with $M(y) > q$ and $m_y \ge 1$ sends $\mu_G(xy)\frac{M(y)-q}{m_y}$ to each neighbor $x$ with $M(x)<q$. 
    \end{itemize}
    Denote by $M'(x)$ the resulting charge on each vertex $x$. For any vertex $y$ with $M(y) \ge q$, we have $M'(y) \ge q$; moreover, $M'(y) = M(y)-m_y \cdot \tfrac{M(y)-q}{m_y} = q$ if $m_y \ne 0$. It suffices to consider vertices of degree less than $q$ and show that, after the discharging process, each such vertex has charge at least $q$. Let $x\in V(G)$ such that $d_G(x) <q$.

\noindent \textbf{Case 1.} $d_G(x)\leq \Delta-q+1$.
Let $y\in N_G(x)$. Then $d_G(y) \ge q+1$. By Corollary~\ref{cor:degree}, $y$ is incident with at least $\Delta+1-d_G(x)$ edges whose other endvertex has degree at least $2\Delta+2-d_G(x)-d_G(y)\ge q+1$. Thus $m_y \leq d_G(y)-(\Delta +1 -d_G(x))=d_G(y)+d_G(x)-\Delta -1$. 
Each $y$ sends to $x$ $$\mu_G(xy)\cdot \frac{M(y)-q}{m_y}\geq \mu_G(xy)\cdot \frac{d_G(y)-q}{d_G(y)+d_G(x)-\Delta -1}\geq \mu_G(xy)\cdot \frac{\Delta-q}{d_G(x)-1}.$$ The last inequality follows from the fact that the function $\frac{d_G(y)-q}{d_G(y)+d_G(x)-\Delta-1}$ is non-increasing in $d_G(y)$, together with $d_G(y)\leq \Delta$. Therefore,
\begin{align*}
   M'(x)&\geq M(x)+ d_G(x)\cdot\frac{\Delta-q}{d_G(x)-1}\\
   &= \Delta-q+1+(d_G(x)-1)+\frac{\Delta-q}{d_G(x)-1}\\
   & \geq \Delta-q+1+2\sqrt{\Delta-q}\\
   & \geq q,
\end{align*}
where the last inequality follows from the assumption $q\le \frac{\Delta+\sqrt{2\Delta-1}}{2}$.

\noindent \textbf{Case 2.} $\Delta-q+1 < d_G(x)< q$. Choose $y\in N_G(x)$ with $d_G(y)$ minimum. We consider two subcases.

\noindent \textbf{Subcase 2.1.} $d_G(y)\leq q$.
By Corollary~\ref{cor:degree}, $x$ is incident with at least $\Delta-d_G(y)+1\geq \Delta -q+1$ edges whose other endvertex has degree at least $2\Delta+2-d_G(x)-d_G(y)> 2\Delta+2-2q\geq q$. Hence each such neighbor $z$ distributes to $x$ at least $\mu_G(xz)\frac{d_G(z)-q}{d_G(z)} \ge  \mu_G(xz)\cdot\frac{2\Delta+2-d_G(x)-d_G(y)-q}{2\Delta+2-d_G(x)-d_G(y)}$. Then,
\begin{align*}
    M'(x) & \geq M(x)+(\Delta-q+1) \cdot \frac{2\Delta+2-d_G(x)-d_G(y)-q}{2\Delta+2-d_G(x)-d_G(y)} \\ 
    & \ge d_G(x)+(\Delta-q+1)\cdot\frac{2\Delta+2-d_G(x)-2q}{2\Delta+2-d_G(x)-q}.
\end{align*}
Since $q \leq \frac{\sqrt{17}-3}{2}(\Delta+1)$ and $\Delta-q+1 < d_G(x) < q$, the function $$
f(t):=t+(\Delta-q+1)\cdot\frac{2\Delta+2-t-2q}{2\Delta+2-t-q}$$ is increasing. Hence,
\begin{align*}
M'(x) & > f(\Delta-q+1) \\ 
&=(\Delta-q+1)+(\Delta-q+1)\cdot\frac{\Delta-q+1}{\Delta+1} \\
& \geq q,    
\end{align*}
where the last inequality follows from
\(q \le \frac{\sqrt{17}-3}{2}(\Delta+1) < (2-\sqrt2)(\Delta+1)\).

\noindent \textbf{Subcase 2.2.} $d_G(y) > q$. That is, all neighbors of $x$ have degree greater than $q$. Note that since $d_G(x) < q <\frac{2\Delta+2}{3}$, we have $q<\Delta+1-\tfrac{d_G(x)}{2}<\Delta$.
We further consider two cases according to whether $$d_G(y) \ge \Delta+1-\frac{d_G(x)}{2}.$$ In either case, we show that
$$
M'(x)\ge 2\left(d_G(x)- \frac{q\,d_G(x)}{2\Delta+2-d_G(x)}\right).
$$

If $d_G(y)\geq \Delta + 1 - \frac{d_G(x)}{2}$, then each neighbor $z\in N_G(x)$ distributes to $x$ at least $\mu_G(xz) \frac{d_G(z)-q}{d_G(z)} \geq \mu_G(xz)\frac{d_G(y)-q}{d_G(y)}$. Thus
\begin{align*}
    M'(x) & \geq M(x)+d_G(x)\cdot \frac{d_G(y)-q}{d_G(y)} \\
    & \geq d_G(x)+d_G(x) \cdot \frac{2\Delta+2-d_G(x)-2q}{2\Delta+2-d_G(x)} \\ 
    & = 2\left(d_G(x)- \frac{q\,d_G(x)}{2\Delta+2-d_G(x)}\right).
\end{align*}

If $d_G(y)<\Delta+1-\frac{d_G(x)}{2}$, then by Corollary~\ref{cor:degree}, $x$ is incident with at least $\Delta-d_G(y)+1$ edges $f=xz$ such that $d_G(z)\geq 2\Delta+2-d_G(x)-d_G(y) > d_G(y)$. Such a neighbor $z$ distributes to $x$ at least $\mu_G(xz) \frac{d_G(z)-q}{d_G(z)} \ge \mu_G(xz) \cdot \frac{2\Delta+2-d_G(x)-d_G(y)-q}{2\Delta+2-d_G(x)-d_G(y)} > \mu_G(xz) \frac{d_G(y)-q}{d_G(y)}$. Therefore,
\begin{align*} 
   M'(x) &\geq d_G(x)+ (\Delta -d_G(y)+1) \cdot \frac{2\Delta+2-d_G(x)-d_G(y)-q}{2\Delta+2-d_G(x)-d_G(y)}+ (d_G(x)-\Delta+d_G(y)-1)\cdot \frac{d_G(y)-q}{d_G(y)} \\ 
    & =2d_G(x)-\left(\frac{\Delta-d_G(y)+1}{2\Delta+2-d_G(x)-d_G(y)}+\frac{d_G(x)-\Delta+d_G(y)-1}{d_G(y)}\right)\,q.
\end{align*}
Let $f(t) = \frac{\Delta-t+1}{2\Delta+2-d_G(x)-t}+\frac{d_G(x)-\Delta+t-1}{t}$, where $t=d_G(y)$ and $q < t < \Delta+1-\tfrac{d_G(x)}{2}$.
Then 
\begin{align*}
f'(t) &=\frac{d_G(x)-\Delta-1}{(2\Delta+2-d_G(x)-t)^2} + \frac{\Delta-d_G(x)+1}{t^2} \\
&= (\Delta-d_G(x)+1)\left(\frac{1}{t^2}-\frac{1}{(2\Delta+2-d_G(x)-t)^2}\right).   
\end{align*}
Since $t < \Delta+1-\frac{d_G(x)}{2}$, we have $2\Delta+2-d_G(x)-t>t$. Hence $f'(t) > 0$ and $f(t) < \tfrac{2d_G(x)}{2\Delta+2-d_G(x)}$. This implies that
\begin{align*}
    M'(x) & > 2d_G(x)- \left(\frac{2d_G(x)}{2\Delta+2-d_G(x)}\right)\,q = 2\left(d_G(x)-\frac{q\,d_G(x)}{2\Delta+2-d_G(x)}\right). 
\end{align*}

It remains to show that $M'(x) \ge 2\left(d_G(x)- \frac{q\,d_G(x)}{2\Delta+2-d_G(x)}\right)\ge q$.
Let $g(t)=t-\frac{qt}{2\Delta+2-t}$, where $t = d_G(x)$ and $\Delta-q+1 < t < q$. Then $g'(t)=1- \frac{(2\Delta+2)q}{(2\Delta+2-t)^2}$. Since $t<q\le \tfrac{\sqrt{17}-3}{2}(\Delta+1) <(3-\sqrt{5})(\Delta+1)$, we have $g'(t)>0$. Consequently, 
\begin{align*}
    M'(x) & > 2\left(\Delta-q+1-\frac{q(\Delta-q+1)}{2\Delta+2-(\Delta-q+1)}\right)\\ 
    & = \frac{2(\Delta-q+1)(\Delta+1)}{\Delta+1+q}\\
    & \ge q,
\end{align*}
where the last inequality follows from the assumption $q\leq \frac{\sqrt{17}-3}{2}(\Delta+1)$.

Therefore $M'(x) \ge q$ for every $x \in V(G)$. This proves that $\overline{d}(G) \ge \min\Big\{\tfrac{\Delta+\sqrt{2\Delta-1}}{2}, \tfrac{\sqrt{17}-3}{2}(\Delta+1)\Big\}$.

We now prove the second assertion. Let $A = \tfrac{2\mu\Delta+2\mu(2\mu-1)}{4\mu-1}$, $B=\frac{\sqrt{17}-3}{2}(\Delta+1)$, and $q = \min\{A,B\}$.
By hypothesis, either $$\mu \le \frac{1+\sqrt{2\Delta-1}}{4},$$ or $$\mu \le \frac 12 + \frac{\sqrt{2}}{4}\sqrt{(5-\sqrt{17})\Delta+(3-\sqrt{17})}.$$
In the former case, a routine calculation yields $A \le \Delta-(2\mu-1)^2$, while in the latter case, $B \le \Delta-(2\mu-1)^2$. Hence $q \le \Delta-(2\mu-1)^2$, or equivalently, $\mu \le \frac{\sqrt{\Delta-q}+1}{2}$.

We refine the argument used to prove the general bound. Observe that the proof of Case~2 above only uses the assumption $q\le \frac{\sqrt{17}-3}{2}(\Delta+1)$, which still holds since $q=\min\{A,B\}\le B$. Hence the same argument yields $M'(x)\ge q$ whenever $\Delta-q+1<d_G(x)<q$.
Thus we may assume that $d_G(x) \le \Delta-q+1$. We claim that
$$
M'(x) \ge \Delta-q+2\mu+\frac{\Delta-q}{2\mu-1}.
$$
To prove this bound, we refine the analysis in Case~1 above and consider the following two cases.

\medskip
\noindent \textbf{Case (i).} $d_G(x)\le 2\mu$. 
As in Case~1 above, Corollary~\ref{cor:degree} implies that for every $y\in N_G(x)$, $m_y\le d_G(y)+d_G(x)-\Delta-1$. Hence $y$ sends to $x$ at least
$\mu_G(xy)\frac{\Delta-q}{d_G(x)-1}$, and so
$$
M'(x)\ge M(x)+d_G(x)\cdot\frac{\Delta-q}{d_G(x)-1}
= \Delta-q+1+(d_G(x)-1)+\frac{\Delta-q}{d_G(x)-1}.
$$
Under the assumption $d_G(x) \le 2\mu \le \sqrt{\Delta-q}+1$, the function $d_G(x)-1+\tfrac{\Delta-q}{d_G(x)-1}$
is non-increasing in $d_G(x)$ on this interval. Hence $M'(x) \ge \Delta-q+2\mu+\tfrac{\Delta-q}{2\mu-1}$.

\noindent \textbf{Case (ii).} $2\mu<d_G(x)\le\Delta-q+1$. Let $y \in N_G(x)$ be chosen so that $\sigma'(x,y)$ is minimized, and let $Z$ be the set given by Lemma~\ref{lem: sigma}. For any $z \in N_G(x)$, $\sigma'(x, z) \ge q$, and every vertex of $K(x,z)$ has degree at least $q+1$, yielding $m_z \le d_G(z) - \sigma'(x,z)$. Thus
$$M'(x) \ge M(x)+\sum_{z \in N_G(x)}\frac{d_G(z)-q}{d_G(z)-\sigma'(x,z)} \cdot \mu_G(xz) \ge d_G(x)+\sum_{z\in N_G(x)}\frac{\Delta-q}{\Delta-\sigma'(x,z)}\cdot \mu_G(xz).$$
Moreover, by the choice of $y$, we have $\sigma'(x,z) \ge \sigma'(x, y)$ for all $z \in N_G(x)$, and for $z \in Z$, $\sigma'(x,z) \ge 2\Delta-d_G(x)-2\mu+2-\sigma'(x,y)$.

If $2\Delta-d_G(x)-2\mu+2-\sigma'(x,y) \ge \sigma'(x, y)$, that is, $\sigma'(x,y)\le \Delta-\mu+1-\tfrac{d_G(x)}{2}$, then
\begin{align*}
M'(x) &\ge d_G(x)+(\Delta-q)\left(\sum_{z\in Z}\frac{\mu_G(xz)}{\Delta-\sigma'(x,z)}+\sum_{w\in N_G(x)\setminus Z} \frac{\mu_G(xw)}{\Delta-\sigma'(x,w)}\right)\\
&\ge d_G(x)+(\Delta-q)\left(\frac{|E_G(x,Z)|}{\sigma'(x,y)-\Delta+d_G(x)+2\mu-2}+\frac{d_G(x)-|E_G(x,Z)|}{\Delta-\sigma'(x,y)}\right).
\end{align*}
By the assumption on $\sigma'(x,y)$ and the bound $|E_G(x, Z)| \ge \Delta-\sigma'(x,y)-\mu+1$, we obtain
$$M'(x)\ge d_G(x)+(\Delta-q)\left(\frac{\Delta-\sigma'(x,y)-\mu+1}{\sigma'(x,y)-\Delta+d_G(x)+2\mu-2}+\frac{d_G(x)-\Delta+\sigma'(x,y)+\mu-1}{\Delta-\sigma'(x,y)}\right).
$$
Let $h(\sigma') := \frac{\Delta-\sigma'-\mu+1}{\sigma'-\Delta+d_G(x)+2\mu-2}+\frac{d_G(x)-\Delta+\sigma'+\mu-1}{\Delta-\sigma'}$, where $\sigma'=\sigma'(x,y)$ and $\sigma' \le \Delta-\mu+1-\tfrac{d_G(x)}{2}$. A direct calculation gives
$$
h'(\sigma') = (d_G(x)+\mu-1)\left(\frac{1}{(\Delta-\sigma')^2}-\frac{1}{(\sigma'-\Delta+d_G(x)+2\mu-2)^2}\right) \le 0,
$$
so $h(\sigma')$ is non-increasing. Hence 
$h(\sigma')\ge \tfrac{2d_G(x)}{2\mu-2+d_G(x)}$, and
$$M'(x) \ge d_G(x)+(\Delta-q)\frac{2d_G(x)}{d_G(x)+2\mu-2}.$$

Otherwise, $\sigma'(x,y)> \Delta-\mu+1-\tfrac{d_G(x)}{2}$. Then, 
\begin{align*}
M'(x)
&\ge d_G(x)+d_G(x)\frac{\Delta-q}{\Delta-\sigma'(x,y)}\\
&> d_G(x) + (\Delta-q)\frac{2d_G(x)}{2\mu-2+d_G(x)}.
\end{align*}

Thus $$M'(x) \ge d_G(x)+(\Delta-q)\frac{2d_G(x)}{2\mu-2+d_G(x)}.$$
Since the function $t \mapsto t+(\Delta-q)\frac{2t}{t+2\mu-2}$
is increasing and $d_G(x) > 2\mu$, it follows that
$$
M'(x)> 2\mu+(\Delta-q)\frac{2\mu}{2\mu-1},
$$ as desired.

In either case, $M'(x) \ge \Delta-q+2\mu+\frac{\Delta-q}{2\mu-1} \ge q$, where the last inequality holds since $q \le \frac{2\mu\Delta+2\mu(2\mu-1)}{4\mu-1}$.
This completes the proof.
\qed

\section{Proof of Theorem~\ref{thm:avg-degree-2-8}}\label{sec:2-8}

Let $G$ be a $\Delta$-critical graph with maximum degree $\Delta \in \{2, 3, 4, 5, 6, 7, 8\}$. By Observation~\ref{obs:critical}, $\delta(G) \ge 2$. Set $q = (2\Delta+2)/3$.
If $\Delta = 2$, then $q = 2$ and $\overline{d}(G) \ge \delta(G) \ge 2$, so the result follows. We now consider $\Delta \ge 3$. We use the discharging method and show that after the discharging process every vertex has charge at least $q$. Assign to each vertex $x$ of $G$ an initial charge $M(x)=d_G(x)$. 
For each vertex $y$, let $E_y$ be the set of edges $e\in E_G(x,y)$ with $M(x) < q$, and set $m_y=|E_y|$.  

The discharging rules depend on the value of $\Delta$: one rule applies when $\Delta \ne 7$, while a modified version is used when $\Delta = 7$. First consider $\Delta \ne 7$. The following rule applies:
    \begin{itemize}
        \item \textbf{Rule.} Each vertex $y \in V(G)$ with $M(y) > q$ and $m_y \ge 1$ sends $\mu_G(xy)\frac{M(y)-q}{m_y}$ to each neighbor $x$ with $M(x)<q$. 
    \end{itemize}
Denote by $M'(x)$ the resulting charge on each vertex $x$. For any vertex $y$ with $M(y) \ge q$, we have $M'(y) \ge q$, with equality whenever $m_y \ne 0$. It remains to consider vertices with initial charge less than $q$.

\begin{claim}\label{claim:rechargeDelta+2}
If $xy$ is an edge of $G$ such that $d_G(x) \le d_G(y)$ and $d_G(x)+d_G(y) = \Delta + 2$, then $\mu_G(xy)=1$ and $M'(x)+M'(y) \ge 2q$. Moreover, if $d_G(y) \ge q$, then $M'(x) \ge q$.
\end{claim}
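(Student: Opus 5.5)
The plan is to read off the structure around $xy$ from Corollary~\ref{cor:degree}, and then run a direct charge count.

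\emph{Multiplicity.} Apply Corollary~\ref{cor:degree} to the pair $(x,y)$. It gives a set $Z_y\subseteq N_G(y)\setminus\{x\}$ with $|E_G(y,Z_y)|\ge \Delta+1-d_G(x)=d_G(y)-1$. Since $x\notin Z_y$, at most $d_G(y)-(d_G(y)-1)=1$ edge at $y$ goes to $x$. Hence $\mu_G(xy)=1$ and $|E_G(y,Z_y)|=d_G(y)-1$, so every neighbor of $y$ other than $x$ lies in $Z_y$.

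\emph{Degrees of the other neighbors.} Part (b) of the corollary gives $\sum_{z\in Z_y}d_G(z)\ge \Delta|Z_y|+\Delta+2-d_G(x)-d_G(y)=\Delta|Z_y|$. Therefore every vertex of $Z_y$ is a $\Delta$-vertex, and so every neighbor of $y$ other than $x$ is a $\Delta$-vertex. The same argument with the roles of $x$ and $y$ swapped shows that every neighbor of $x$ other than $y$ is a $\Delta$-vertex. Since $\Delta\ge 3$, we have $\Delta>q$, so all these neighbors send charge. Each such $\Delta$-vertex $w$ satisfies $m_w\le d_G(w)=\Delta$. Thus every edge from $\{x,y\}$ to $V(G)\setminus\{x,y\}$ carries at least $(\Delta-q)/\Delta$ toward the low endpoint. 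There are exactly $(d_G(x)-1)+(d_G(y)-1)=\Delta$ such edges, all simple with respect to the counting in the Rule, so together they deliver at least $\Delta-q$.

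\emph{First inequality, $M'(x)+M'(y)\ge 2q$.} If $d_G(y)\ge q$, then $M'(y)\ge q$, and the stronger second assertion (below) also gives $M'(x)\ge q$. If $d_G(y)<q$, both endpoints are low, the edge $xy$ transfers nothing, and
\[
M'(x)+M'(y)\ge \Delta+2+(\Delta-q)=2\Delta+2-q=2q,
\]
using $q=(2\Delta+2)/3$.

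\emph{Second assertion, $d_G(y)\ge q$ implies $M'(x)\ge q$.} We may assume $d_G(x)<q$. The edge $xy$ is then $y$'s only edge to a low vertex, so $m_y=1$ when $d_G(y)>q$ and $y$ sends $d_G(y)-q$ to $x$. If $d_G(y)=q$, that amount is zero, which is harmless. For each other neighbor $w$ of $x$, a $\Delta$-vertex, apply Corollary~\ref{cor:degree} to $(x,w)$. This yields at least $\Delta+1-d_G(x)$ edges at $w$ avoiding $x$ whose ends $z$ lie in the Kierstead set $K_G(x,w)$. Every such $z$ therefore has $d_G(z)\ge 2\Delta+2-d_G(x)-\Delta=d_G(y)\ge q$, so $m_w\le \Delta-(\Delta+1-d_G(x))=d_G(x)-1$. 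Hence $x$ receives at least $(\Delta-q)/(d_G(x)-1)$ along each of its $d_G(x)-1$ other edges, and
\[
M'(x)\ge d_G(x)+(d_G(y)-q)+(\Delta-q)=2\Delta+2-2q=q.
\]

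The main point to get right is the second step, the claim that all other neighbors are $\Delta$-vertices. It relies on the sum bound being tight: the surplus term $\Delta+2-d_G(x)-d_G(y)$ vanishes exactly under the hypothesis $d_G(x)+d_G(y)=\Delta+2$. A second point to check is the bound $m_w\le d_G(x)-1$ in the last case, which requires reading the Kierstead-set degree condition as "high" precisely because $d_G(y)\ge q$.
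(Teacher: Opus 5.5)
Your proof is correct, but it obtains the extra charge $\Delta-q$ by a different count than the paper.

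The opening agrees in substance. Corollary~\ref{cor:degree} applied to $(x,y)$ forces $\mu_G(xy)=1$ and makes every other neighbor of $x$ and of $y$ a $\Delta$-vertex. The paper adds the edge counts for $Z_x$ and $Z_y$ and uses part~(a), while you treat one side at a time and use part~(b); this difference is cosmetic.

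The routes split at the surplus count. The paper fixes one $z\in Z_x$ and invokes the second half of Corollary~\ref{cor:degree}(a), which applies because $d_G(x)<\Delta$ and ultimately rests on Lemma~\ref{lem:ShortK_Path}(a). This shows every neighbor of $z$ outside $\{x,y\}$ is a $\Delta$-vertex. So $z$ pours its whole surplus $\Delta-q$ into $\{x,y\}$ (into $x$ alone when $y$ is not low), and one computation gives both assertions with no case split.

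You instead spread the surplus over edges. When $d_G(y)<q$, the trivial bound $m_w\le\Delta$ on the $\Delta$ outgoing edges suffices. When $d_G(y)\ge q$, you apply the Corollary afresh to each pair $(x,w)$. There the identity $2\Delta+2-d_G(x)-\Delta=d_G(y)\ge q$ makes the Kierstead neighbors of $w$ non-low and gives $m_w\le d_G(x)-1$.

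Your route buys economy of tools. It uses only the degree consequences of multi-fan elementarity (Lemma~\ref{lem:multi_fan}(a)) and never the Kierstead-path lemma. In the case $d_G(y)<q$ it also bounds $M'(x)$ and $M'(y)$ separately, which already gives $M'(x)\ge q$ when $d_G(x)=d_G(y)$; this is the computation the paper repeats in Cases~1, 3 and~4.

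The paper's route buys brevity and a reusable structural fact: a neighbor of $x$ whose only possible low neighbors are $x$ and $y$. The paper invokes this fact again in those cases and when adapting the claim to the modified rules for $\Delta=7$.
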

\begin{proof}
Since $\Delta > 2$, $d_G(x) + d_G(y) = \Delta + 2 < 2q$, and hence $d_G(x) < q < \Delta$, so $m_y \ne 0$.
Applying Corollary~\ref{cor:degree} to the vertices $x, y$, we obtain sets $Z_x\subseteq N_G(x)\setminus\{y\}$ and $Z_y\subseteq N_G(y)\setminus\{x\}$ such that 
$$|E_G(x, Z_x)| \ge \Delta-d_G(y)+1 \quad \text{and} \quad |E_G(y, Z_y)| \ge \Delta-d_G(x)+1.$$ 
Then, $|E_G(x, Z_x)|+|E_G(y, Z_y)| \ge 2\Delta+2-(d_G(x)+d_G(y))=\Delta$.
On the other hand, 
$$|E_G(x,N_G(x)\setminus \{y\})|+|E_G(y,N_G(y)\setminus\{x\})|=d_G(x)+d_G(y)-2\mu_G(xy) =\Delta+2-2\mu_G(xy).$$
It follows that $\mu_G(xy) = 1$ and $N_G(\{x,y\})\setminus\{x, y\} = Z_x \cup Z_y$.
Moreover, by Corollary~\ref{cor:degree}$(a)$ each vertex $z \in N_G(\{x,y\})\setminus\{x, y\}$ has degree $\Delta$.
If $d_G(y) > q$, then $y$ sends charge only to $x$. As $|E_G(x, Z_x)| \ge \Delta-d_G(y)+1 = d_G(x)-1 \ge 1$, $Z_x \ne \emptyset$. For any $z \in Z_x$, $m_z \ne 0$, and every neighbor of $z$ outside $\{x, y\}$ is a $\Delta$-vertex, since $d_G(x) < \Delta$. Hence $M'(z) = q$, and $z$ sends exactly $\Delta - q$ either to $x$ alone or to both $x$ and $y$. Consequently, $M'(x) + M'(y)=M(x)+M(y)+(\Delta-q) \ge 2\Delta + 2 - q = 2q$. If $d_G(y) \ge q$, then $M'(y) = q$, and thus $M'(x) \ge q$.
\end{proof}

Let $x \in V(G)$. If $d_G(x) = 2$, then every $y \in N_G(x)$ satisfies $d_G(y) = \Delta > q$. By Claim~\ref{claim:rechargeDelta+2}, we get $M'(x) \ge q$. Thus $2$-vertices require no further consideration. When $\Delta = 3$, we have $q = 8/3 < 3$, so $M'(x) \ge q$ for every vertex $x$, completing the case $\Delta = 3$.

Assume $\Delta \in \{4,5,6,8\}$. The above argument still applies, but additional cases must be considered. We distinguish cases according to the value of $\Delta$. Let $x\in V(G)$ be a vertex of the type considered in the corresponding case, and let $y$ be a neighbor of $x$ satisfying $d_G(y)=\min \{d_G(z) \text{: } z\in N_G(x)\}$. We have $d_G(y)\ge \Delta+2-d_G(x)$.

\medskip
\noindent
\textbf{Case 1:} $\Delta = 4$. In this case, $q=\tfrac{10}{3}$. Since $2$-vertices have already been handled, it suffices to consider $3$-vertices. Let $x$ be a $3$-vertex. Then $d_G(y) \ge 3$. Suppose $d_G(y) = 3$. By Corollary~\ref{cor:degree}, $|E_G(x,Z_x)|,\ |E_G(y, Z_y)| \ge 2$, and hence $\mu_G(xy) = 1$ and $N_G(\{x, y\})\setminus\{x, y\} = Z_x \cup Z_y$. Also, every vertex $z \in N(\{x, y\})\setminus\{x,y\}$ is a $4$-vertex, and every neighbor of $z$ other than $x$ and $y$ is also a $4$-vertex as $d_G(x), d_G(y) < \Delta$. Hence each such $z$ sends charge only to vertices in $\{x,y\}$, so $m_z = \mu_G(xz)+\mu_G(yz) \le 4$. Therefore $$M'(x) \ge M(x)+ (d_G(x)-\mu_G(xy))\cdot \frac{\Delta-q}{4} = 3+2\cdot\frac{4-\frac{10}{3}}{4} = \frac{10}{3}=q.$$ 
Moreover, $M'(x)=q$ if and only if equality holds in the above inequalities, that is, if and only if $x$ has a unique neighbor $z\ne y$ with
$\mu_G(xz)=\mu_G(yz)=2$. Equivalently, this occurs exactly when $G$ is a triangle on vertices $x,y,z$ with $\mu_G(xz)=\mu_G(yz)=2$ and $\mu_G(xy)=1$. 
Now suppose that $x$ only has $4$-neighbors. Then
\[
M'(x) \ge M(x)+\frac{\Delta-q}{4} \cdot d_G(x) = 3+\frac{4-\frac{10}{3}}{4}\cdot 3 = \frac{7}{2} > q.
\]
Therefore $M'(x) \ge q$ for every $x \in V(G)$, and the result follows for $\Delta = 4$.

For the remaining cases with $\Delta \in \{5, 6, 8\}$, we claim that if $d_G(x) = 3$, then $M'(x) \ge q$. Indeed, since $\Delta \ge 5$, $d_G(y) \ge \Delta +2 -d_G(x) = \Delta-1 \ge q$. Hence $M'(x) \ge q$ whenever $d_G(y) = \Delta - 1$ by Claim~\ref{claim:rechargeDelta+2}. We may assume that $d_G(y) = \Delta$, that is, each neighbor of $x$ is a $\Delta$-vertex. Applying Corollary~\ref{cor:degree} to each $\Delta$-neighbor of $x$, we obtain that every such neighbor is incident with at least $\Delta-2$ edges whose other endvertex has degree at least $\Delta-1 \ge q$. Thus $m_y \le 2$ for any $y \in N_G(x)$ and then
\[
M'(x) \ge M(x)+\frac{\Delta-q}{2} \cdot d_G(x) = 3 + \frac{\Delta-2}{2} =  \frac{\Delta+4}{2} \ge q,
\]
where the last inequality holds since $\Delta \le 8$.
Thus $3$-vertices require no further consideration. 

\medskip
\noindent
\textbf{Case 2:} $\Delta = 5$. Then $q = 4$, and $M'(x) \ge q$ for all $x \in V(G)$ follows from the preceding argument.

\medskip
\noindent
\textbf{Case 3:} $\Delta = 6$. Then, $q= \tfrac{14}{3} < 5$. In this case, we consider $4$-vertices only. 
    
Suppose $d_G(x) = 4$. Then $d_G(y) \ge 4$. If $d_G(y) = 4$, by Corollary~\ref{cor:degree}, $|E_G(x, Z_x)|,|E_G(y, Z_y)| \ge 3$, and thus $\mu_G(xy) = 1$ and $N_G(\{x, y\})\setminus\{x, y\} = Z_x\cup Z_y$. Moreover, every vertex in $N_G(\{x,y\})\setminus\{x,y\}$ is a $6$-vertex, and every neighbor of such a vertex is either $x$, $y$, or a $6$-vertex. Thus for each $z\in N_G(\{x,y\})\setminus\{x,y\}$, $m_z = \mu_G(xz)+\mu_G(yz) \le 6$. Therefore, $M'(x) \ge 4+3\cdot\tfrac{6-14/3}{6} = \tfrac{14}{3}$, where equality holds if and only if $G$ is a triangle on vertices $x, y, z$ with $\mu_G(xz) = \mu_G(yz) = 3$ and $\mu_G(xy) = 1$.
We may assume that $d_G(y) \ge 5$.
Then, we partition $N_G(x)$ into two sets. Let $V_5$ and $V_6$ denote the sets of $5$-neighbors and $6$-neighbors of $x$, respectively. Note that at most one of the sets may be empty.
Let $y' \in V_5$. Applying Corollary~\ref{cor:degree} to $x, y'$, we obtain that $y'$ is incident with at least three edges whose other endvertex has degree at least $5$. Hence $m_{y'} \le 2$. So each vertex $y' \in V_5$ distributes at least $\mu_G(xy') \cdot \tfrac{5-q}{2} =\tfrac 16 \mu_G(xy')$ to $x$. Let $y' \in V_6$. Then each neighbor $y'$ distributes at least $\mu_G(xy') \cdot \frac{6-q}{6}=\tfrac 29 \mu_G(xy')$ to $x$. Thus
\[
M'(x) \ge M(x) + \frac 16 \sum_{y'\in V_5} \mu_G(xy') + \frac 29 \sum_{y'\in V_6} \mu_G(xy') \\
\ge M(x) + \frac{1}{6} \cdot d_G(x) = 4 + \frac{2}{3} = \frac{14}{3} = q.
\]
Therefore $M'(x) \ge q$ for every $x \in V(G)$.

\medskip
\noindent
\textbf{Case 4:} $\Delta = 8$. Then $q = 6$, so in this case, we consider $4$- and $5$-vertices.

Suppose $d_G(x) = 4$. By Claim~\ref{claim:rechargeDelta+2}, we may assume $d_G(y) \ge 7$. For any vertex $z \in N_G(x)$, by Corollary~\ref{cor:degree}, $z$ is incident with at least $5$ edges whose other endvertex has degree at least $q$. Consequently, each $z$ distributes at least $\frac{1}{2}\mu_G(xz)$ to $x$ if $M(z) = 7$, and at least $\frac{2}{3}\mu_G(xz)$ if $M(z) = 8$. It follows that $M'(x) \ge M(x) + \tfrac 12 \cdot d_G(x) = 6$. 

Suppose $d_G(x) = 5$. Then $d_G(y) \ge 5$. If $d_G(y) = 5$, then, as in Case~3, we have $\mu_G(xy) = 1$, every vertex in $N_G(\{x,y\})\setminus\{x,y\}$ is an $8$-vertex, and each neighbor of such a vertex is either $x$, $y$, or an $8$-vertex. Hence for each $z \in N_G(\{x,y\})\setminus\{x,y\}$, $m_z = \mu_G(zx)+\mu_G(zy) \le 8$. Therefore, $M'(x) \ge 5+4\cdot \tfrac{8-6}{8}=6$, with equality if and only if $G$ is a triangle on vertices $x,y,z$ with $\mu_G(xz)=\mu_G(yz)=4$ and $\mu_G(xy)=1$. We now assume $d_G(y) = 6$. By Corollary~\ref{cor:degree}, there exists a nonempty set $Z_x \subseteq N_G(x)\setminus\{y\}$ with $|E_G(x, Z_x)| \ge 3$ such that each $z \in Z_x$ is incident with at least $7$ edges whose other endvertex is either $x$ or has degree at least $6$. 
We claim that each vertex in $Z_x$ sends at least $1$ to $x$. Let $z \in Z_x$. If $d_G(z) = 7$, then $x$ is the only neighbor of $z$ with degree less than $q$, and hence $z$ sends exactly $M(z)-q = 1$ to $x$. If $d_G(z) = 8$, then $m_z \le \mu_G(xz)+1$, and thus $z$ sends at least $(M(z)-q)\tfrac{\mu_G(xz)}{\mu_G(xz)+1}$ to $x$. Since $\mu_G(xz) \ge 1$, we have $\tfrac{\mu_G(xz)}{\mu_G(xz)+1} \ge \tfrac12$. It follows that $z$ sends at least $1$ to $x$. As $|Z_x| \ge 1$, we obtain $M'(x) \ge 6$. We now assume $d_G(y) \ge 7$. Let $z \in N_G(x)$. If $M(z) = 7$, then $z$ is incident with at least $4$ edges whose other endvertex has degree at least $6$, so $m_{z} \le 3$. Thus $z$ sends at least $\frac{1}{3}\mu_G(xz)$ to $x$. If $M(z) = 8$, then $z$ sends at least $\frac{1}{4}\mu_G(xz)$ to $x$. Hence $M'(x) \ge M(x) + \frac{1}{4} \cdot d_G(x) = \tfrac{25}{4} > 6$. Therefore $M'(x) \ge q$ for all $x \in V(G)$, and this case is complete.

We now turn to the case $\Delta = 7$ and $q = \tfrac{16}{3} < 6$, where we modify the rule as follows:
\begin{itemize}
\item \textbf{Rule 1.} Each $7$-vertex in $G$ distributes $\frac{1}{3}$ to each of its $5$-neighbors. 
\end{itemize}

Let $M_1(x)$ denote the charge of $x$ after Rule~1. Then $M_1(x) = d_G(x)$ if $d_G(x) \notin \{5, 7\}$. For each vertex $y$, let $E'_y$ be the set of edges $e\in E_G(x,y)$ with $M_1(x) < q$, and let $m'_y=|E'_y|$.
\begin{itemize}
\item \textbf{Rule 2.} Each vertex $y \in V(G)$ with $\min\{M(y), M_1(y)\} > q$ and $m'_y \ge 1$ sends $\mu_G(xy)\frac{M_1(y)-q}{m'_y}$ to each neighbor $x$ with $M_1(x)<q$.
\end{itemize}

Let $M'(x)$ denote the charge of $x$ after Rule~2. Clearly, $M'(x) \ge q$ if $M(x) = 5$ and $x$ has a $7$-neighbor, or if $M(x) = 6$.
We claim that if $M(x) = 7$, then $M_1(x) \ge q$, and hence $M'(x) \ge q$. It suffices to show that $x$ has at most five $5$-neighbors. Indeed, let $y$ be a $5$-neighbor of $x$. Applying Corollary~\ref{cor:degree} to $x$ and $y$, we obtain a set $Z_x \subseteq N_G(x)\setminus \{y\}$ such that $|E_G(x, Z_x)| \ge 3$ and $\sum_{z\in Z_x}d_G(z) \ge \Delta|Z_x| - 3$. This implies that at most one vertex in $Z_x$ has degree $5$. Hence $x$ has at most $\Delta - 3 + 1 = 5$ neighbors of degree $5$. 
Moreover, if $M(x) \ge 6$ and $m'_x \ne 0$, then $M'(x) = q$.

We claim that Claim~\ref{claim:rechargeDelta+2} still holds under the modified rules. 
The argument is similar to that for $\Delta \ne 7$. The differences arise when $d_G(x) = 2$ and $d_G(y) = 7$, or when $d_G(x) = 4$ and $d_G(y) = 5$. In both cases, by the same argument as in the proof of
Claim~\ref{claim:rechargeDelta+2} (which does not depend on the
discharging rule), we have $\mu_G(xy) = 1$, and every vertex in $N_G(\{x, y\})\setminus\{x, y\}$ is a $\Delta$-vertex. Moreover, every neighbor of such a vertex, except $x$ and $y$, has degree at least $6$. Let $z \in N_G(x) \setminus \{y\}$. In the former case, after Rule~1, we have $M_1(x) = 2$, $M_1(y) = 7$, and since $z$ has no $5$-neighbor, $M_1(z) = 7$. As both $y$ and $z$ have only one neighbor, namely $x$, of degree less than $q$, each of $y$ and $z$ sends $7 - q = 5/3$ to $x$ in Rule~2. Consequently, $M'(x) = 2 + \tfrac{10}{3}= \tfrac{16}{3}$ and $M'(y) = \tfrac{16}{3}$. In the latter case, $M_1(x) = 4$ and $z$ has at most one $5$-neighbor, namely $y$. If $yz \in E(G)$, then $M_1(z) = \tfrac{20}{3}$ and $M_1(y) \ge \tfrac{16}{3}$. This implies that $y$ neither sends nor receives charge in Rule~2, so $M'(y) \ge \tfrac{16}{3}$. Also, $z$ sends $M_1(z)-q = \tfrac{4}{3}$ to $x$ in Rule~2, so $M'(x) \ge \tfrac{16}{3}$, and hence $M'(x) + M'(y) \ge 2q$. If $yz \notin E(G)$, then $z$ has only one neighbor of degree less than $q$, namely $x$. Hence $z$ sends $\Delta-q$ to $x$ in Rule~2, so $M'(x)\ge d_G(x)+\Delta-q$. Since $M'(y) \ge M_1(y) \ge d_G(y)$, $M'(x)+M'(y) \ge 2q$. The claim follows. 

Therefore it remains to consider $3$- and $4$-vertices, and $5$-vertices without $7$-neighbors. Let $x$ be such a vertex, and let $y \in N_G(x)$ be a neighbor of minimum degree. Then, $d_G(y) \ge 9-d_G(x)$.

Suppose $d_G(x) = 3$ and $d_G(y) \ge 6$. By Claim~\ref{claim:rechargeDelta+2}, $M'(x) \ge q$ if $d_G(y) = 6$. We may assume all neighbors of $x$ are $7$-vertices. By Corollary~\ref{cor:degree}, each is incident with at least five edges whose other endvertex has degree at least $6$. Thus, for any $z \in N_G(x)$, we have either $M_1(z) = 20/3$ and $m'_z = 1$, or $M_1(z) = 7$ and $m'_z \le 2$. Each $z$ sends either exactly $\tfrac{4}{3}$, in which case $\mu_G(xz) = 1$, or at least $\tfrac{5}{6}\mu_G(xz)$ to $x$ in Rule~2. Thus $M'(x) \ge M_1(x)+\tfrac{5}{6}\cdot d_G(x) = 3+\tfrac{5}{6}\cdot 3 = \tfrac{11}{2} > q$. 

Suppose $d_G(x) = 4$ and $d_G(y) \ge 5$. If $d_G(y) = 5$, then, as in the proof of Claim~\ref{claim:rechargeDelta+2}, $M'(x), M'(y) \ge q$ whenever $x$ and $y$ have a common $\Delta$-neighbor. We may assume $x$ and $y$ have no common neighbor. Then there exist distinct $\Delta$-vertices $z_1, z_2$ such that $z_1\in N_G(x)\setminus\{y\}$ and $z_2\in N_G(y)\setminus\{x\}$. In Rule~1, $z_2$ sends $1/3$ to $y$, and in Rule~2, $z_1$ sends $\Delta-q = 5/3$ to $x$. Thus $M'(x) \ge 17/3$ and $M'(y) \ge 16/3$. We next consider $d_G(y) \ge 6$. It suffices to show that each $6$- or $7$-neighbor $z$ of $x$ sends at least $\tfrac{1}{3}\mu_G(xz)$ to $x$ in Rule~2, which yields 
$M'(x) \ge M_1(x) + \tfrac{1}{3}\cdot d_G(x) = \tfrac{16}{3}$. If $d_G(z) = 6$, $M_1(z) = 6$ and it is incident with at least four edges whose other endvertex has degree at least $6$, so $z$ sends $x$ at least $\tfrac{1}{3}\mu_G(xz)$ in Rule~2. If $d_G(z) = 7$, 
then by Corollary~\ref{cor:degree}, there is a set $Z_z \subseteq N_G(z)\setminus\{x\}$ with $|E_G(z, Z_z)| \ge 4$ such that 
$$
\sum_{u\in Z_z}d_G(u) \ge \Delta|Z_z|-2.
$$ 
This implies that $Z_z$ contains at most one vertex of degree less than $q$, and if so, it must be a $5$-vertex. Thus $z$ has at most four neighbors of degree less than $q$, of which at most three are $5$-vertices. We consider the charge sent from $z$ to $x$ in Rule~2 according to the number of $5$-neighbors of $z$, as summarized in the following table.
\renewcommand{\arraystretch}{1.3}
\begin{table}[H]
\centering
\begin{tabular}{|c|c|c|c|}
\hline
number of $5$-neighbors & $M_1(z)$ & upper bound on $m'_z$ & lower bound on charge to $x$ \\ \hline
0 & 7 & 3 & $\tfrac{5}{9}\mu_G(xz)$ \\ \hline
1 & $20/3$ & 3 & $\tfrac{4}{9}\mu_G(xz)$ \\ \hline
2 & $19/3$ & 2 & $\tfrac{1}{2}\mu_G(xz)$ \\ \hline
3 & 6 & 1 & $\tfrac{2}{3}\mu_G(xz)$ \\ \hline
\end{tabular}
\caption{Lower bounds on the charge sent from $z$ to $x$}
\end{table}

\noindent Therefore, in all cases, the charge sent from $z$ to $x$ is at least $\tfrac{1}{3}\mu_G(xz)$ in Rule~2.

Finally, suppose $d_G(x) = 5$ and $x$ has no $7$-neighbor. Then, $M_1(x) = M(x) = 5$ and $d_G(y) \ge 4$. The case where $d_G(y) = 4$ has already been considered, so we may assume that $d_G(y) \ge 5$. Let $V_5 = \{z \in N_G(x): d_G(z) = 5\}$ and $V_6 = \{z \in N_G(x): d_G(z) = 6\}$. Then $N_G(x) = V_5 \cup V_6$. If $V_5 \neq \emptyset$, then $d_G(y)= 5$ and by applying Corollary~\ref{cor:degree} to $x$ and $y$, we obtain a set $Z_x \subseteq N_G(x)\setminus\{y\}$ with $|E_G(x, Z_x)| \ge 3$ such that 
$$
\sum_{z\in Z_x}d_G(z) \ge \Delta|Z_x|-1.
$$
Since $x$ has no $7$-neighbor, $Z_x$ consists of a single vertex $z$ with $d_G(z) = 6$ and $\mu_G(xz) \ge 3$, which implies that $z$ sends at least $\tfrac12 (6-q) = \tfrac{1}{3}$ to $x$. Thus $M'(x) \ge M_1(x)+\frac{1}{3} = \tfrac{16}{3}$. If $V_5 = \emptyset$, then every neighbor $z$ of $x$ is a $6$-vertex and sends at least $\tfrac{6-q}{6}\cdot\mu_G(xz) = \tfrac{1}{9}\mu_G(xz)$ to $x$. Hence $M'(x) \ge M_1(x)+\tfrac{1}{9}\cdot d_G(x) = \tfrac{50}{9}$. In either case, $M'(x) \ge q$.

Therefore $M'(x) \ge q$ for any $x \in V(G)$. This completes the proof.\qed

\section{Generalized Woodall's Lemma to Multigraphs}\label{sec:prooflemma}
In this section, we first present several results that will be used in the proof of Lemma~\ref{lem: sigma}. Let $G$ be a $\Delta$-critical graph with multiplicity $\mu$, let $e=xy \in E_G(x,y)$, and let $\varphi \in \mathcal{C}^{\Delta}(G-e)$. Let $\alpha, \beta \in [\Delta]$. For a vertex $u\in V(G)$, let $P_u(\alpha, \beta, \varphi)$ denote the unique $(\alpha, \beta)$-chain that contains the vertex $u$. The coloring $\varphi' = \varphi/P_u(\alpha,\beta,\varphi)$ obtained from $\varphi$ by swapping the colors $\alpha$ and $\beta$ along the path $P_u(\alpha,\beta,\varphi)$ is also a proper coloring of $G-e$. Such a switching operation is called a \emph{Kempe change}. Moreover, if $\alpha \in \varphi(u)$, denote by $u_{\alpha,\varphi}$ the unique neighbor of $u$ such that $\varphi(f)=\alpha$ for some $f \in E_G(u,u_{\alpha,\varphi})$. When the coloring is clear, we simply write $u_\alpha$. Recall that $K_G(x,y)$ and $K_G(y, x)$ denote the Kierstead sets of $(x, y)$ and $(y, x)$, respectively, and that $Z_y(\varphi) \subseteq K_G(x, y)$ and $Z_x(\varphi) \subseteq K_G(y, x)$, where $Z_y(\varphi)$ and $Z_x(\varphi)$ are defined in Section~\ref{sec:pre}.
From the definition of $Z_y(\varphi)$ and $Z_x(\varphi)$, we may write
$$
Z_y(\varphi) = \{y_{\alpha,\varphi} : \alpha \in \overline{\varphi}(x)\}
\quad \text{and} \quad
Z_x(\varphi) = \{x_{\alpha,\varphi} : \alpha \in \overline{\varphi}(y)\}.
$$

\begin{lemma}\label{prop: 25}
Let $e \in E_G(x, y)$ be an edge in a $\Delta$-critical graph $G$ and $\varphi \in \mathcal{C}^{\Delta}(G - e)$. 
For every $\alpha \in \overline{\varphi}(x)$ and every $z \in Z_x(\varphi)$, $z_{\alpha,\varphi}$ is well-defined and $z_{\alpha,\varphi} \in K(x,z)$.

\end{lemma}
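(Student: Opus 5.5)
The plan is to place $z$ in a multi-fan at $x$ and then read off both conclusions from Lemma~\ref{lem:multi_fan} and Lemma~\ref{lem:ShortK_Path}. Fix $\alpha\in\overline{\varphi}(x)$ and $z\in Z_x(\varphi)$. By definition $z=x_{\beta,\varphi}$ for some $\beta\in\overline{\varphi}(y)$, witnessed by an edge $f\in E_G(x,z)$ with $\varphi(f)=\beta$. Then $(e,y,f,z)$ is a multi-fan at $x$ with respect to $e$ and $\varphi$: condition (F2) holds because $\varphi(f)=\beta\in\overline{\varphi}(y)$. By Lemma~\ref{lem:multi_fan}(a) the set $\{x,y,z\}$ is $\varphi$-elementary. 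Since $\alpha\in\overline{\varphi}(x)$, we get $\alpha\notin\overline{\varphi}(z)$. Hence $\alpha$ is present at $z$, and $z_{\alpha,\varphi}$ is well-defined. Call it $u$, and let $g\in E_G(z,u)$ be the edge with $\varphi(g)=\alpha$. Note that $u\neq x$, since $\alpha$ is missing at $x$.

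It remains to show $u\in K(x,z)$, that is, $u\in N_G(z)\setminus\{x\}$ and $d_G(x)+d_G(z)+d_G(u)\ge 2\Delta+2$. Adjacency to $z$ and $u\ne x$ are already in hand.

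\textbf{Case $u=y$.} The degree condition becomes $d_G(x)+d_G(y)+d_G(z)\ge 2\Delta+2$. This follows from elementarity of $\{x,y,z\}$:
\[
|\overline{\varphi}(x)|+|\overline{\varphi}(y)|+|\overline{\varphi}(z)|\le\Delta,
\]
that is, $(\Delta+1-d_G(x))+(\Delta+1-d_G(y))+(\Delta-d_G(z))\le\Delta$.

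\textbf{Case $u\ne y$.} Consider $K=(y,e,x,f,z,g,u)$. Its vertices are distinct, consecutive vertices are joined by the listed edges, and (K2) holds because $\varphi(f)=\beta\in\overline{\varphi}(y)$ and $\varphi(g)=\alpha\in\overline{\varphi}(x)$. So $K$ is a Kierstead path on four vertices with respect to $e$ and $\varphi$. Lemma~\ref{lem:ShortK_Path}(c) then gives $d_G(x)+d_G(z)+d_G(u)\ge 2\Delta+2$, which is exactly the Kierstead condition for $u\in K(x,z)$.

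The only delicate point is matching the roles of the vertices: the middle three vertices of the Kierstead path must be $x,z,u$. This forces us to start the path at $y$ and to use the uncolored edge $e$ first. Beyond that bookkeeping, and the degenerate case $u=y$, I expect no real obstacle.
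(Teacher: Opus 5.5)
Your proof is correct. The existence of $z_{\alpha,\varphi}$ and $z_{\alpha,\varphi}\ne x$ are argued exactly as in the paper, via the multi-fan $(e,y,f,z)$ at $x$ and Lemma~\ref{lem:multi_fan}(a).

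For the membership $z_{\alpha,\varphi}\in K(x,z)$ you take a different route. The paper uses no Kierstead path. It uncolors $f$ and colors $e$ with $\beta$, which gives $\varphi'\in\mathcal{C}^{\Delta}(G-f)$ with $\overline{\varphi'}(x)=\overline{\varphi}(x)\ni\alpha$ and with the $\alpha$-edge at $z$ unchanged. Hence $z_{\alpha,\varphi}\in Z_z(\varphi')\subseteq K(x,z)$, and that containment is just elementarity of a three-vertex multi-fan at $z$. Shifting the uncolored edge in this way makes the argument uniform: the case $z_{\alpha,\varphi}=y$ needs no separate treatment, and only Lemma~\ref{lem:multi_fan}(a) is used.

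You instead apply Lemma~\ref{lem:ShortK_Path}(c) to $(y,e,x,f,z,g,u)$. This is legitimate, and it is exactly the argument the paper itself uses in Claim~\ref{claim:1}(a). It does, however, call on the heavier Kostochka--Stiebitz lemma. It also forces you to split off $u=y$, because a Kierstead path needs four distinct vertices. The paper's recoloring shows that a four-vertex Kierstead path whose last edge is colored from $\overline{\varphi}(y_1)$ is really a multi-fan in disguise, so the full strength of Lemma~\ref{lem:ShortK_Path} is not needed here.
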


\begin{proof}
Since $z \in Z_x(\varphi)$, there is a color $\beta \in \overline{\varphi}(y)$ and an edge $e' \in E_G(x, z)$ such that $\varphi(e') = \beta$. Clearly, $\beta \ne \alpha$. By Lemma~\ref{lem:multi_fan}$(a)$, $\{x, y, z\}$ is $\varphi$-elementary, so $z_{\alpha,\varphi}$ exists and $z_{\alpha,\varphi} \ne x$. Now we uncolor $e'$ and color $e$ with $\beta$. It is easy to check that the resulting coloring $\varphi'$ is proper, and $z_{\alpha,\varphi} \in Z_z(\varphi') \subseteq K(x, z)$.
\end{proof}

\begin{lemma}\label{prop: 26}
Let $G$ be a $\Delta$-critical graph, let $e \in E_G(x,y)$, and let $\varphi \in \mathcal{C}^{\Delta}(G - e)$. Then the following statements hold:
\begin{enumerate}
\item[(a)] $$|\overline{\varphi}(x) \cup \overline{\varphi}(y)| = 2\Delta + 2 - d_G(x) - d_G(y),$$ and $$\mu_G(xy)-1 \le |\varphi(x) \cap \varphi(y)| = d_G(x) + d_G(y) - \Delta - 2.$$
\item[(b)] If $u \in N_G(\{x,y\}) \setminus \{x,y\}$ satisfies $d_G(x) + d_G(y) + d_G(u) < 2\Delta + 2$, then $$|\overline{\varphi}(u) \cap (\overline{\varphi}(x) \cup \overline{\varphi}(y))| \ge |\varphi(x) \cap \varphi(y) \cap \varphi(u)| + 1 \ge 1.$$
\end{enumerate}
\end{lemma}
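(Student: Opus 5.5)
Part (a) is a counting argument. In $G-e$ we have $|\varphi(x)| = d_G(x)-1$ and $|\varphi(y)| = d_G(y)-1$. Hence $|\overline{\varphi}(x)| = \Delta+1-d_G(x)$ and $|\overline{\varphi}(y)| = \Delta+1-d_G(y)$. Lemma~\ref{lem:multi_fan}(a), applied to the multi-fan $(e,y)$ at $x$, shows that $\{x,y\}$ is $\varphi$-elementary. So the two missing sets are disjoint, which gives $|\overline{\varphi}(x)\cup\overline{\varphi}(y)| = 2\Delta+2-d_G(x)-d_G(y)$. Taking complements in $[\Delta]$, we have $\varphi(x)\cap\varphi(y) = [\Delta]\setminus(\overline{\varphi}(x)\cup\overline{\varphi}(y))$. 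Its size is therefore $\Delta-(2\Delta+2-d_G(x)-d_G(y)) = d_G(x)+d_G(y)-\Delta-2$. For the lower bound, the $\mu_G(xy)-1$ edges of $E_G(x,y)\setminus\{e\}$ carry pairwise distinct colors, and each such color is present at both $x$ and $y$. Thus $|\varphi(x)\cap\varphi(y)|\ge \mu_G(xy)-1$.

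For (b), by symmetry let $u$ be adjacent to $y$ (the case $u\in N(x)$ is identical with $x,y$ swapped). Set $A=\overline{\varphi}(x)\cup\overline{\varphi}(y)$ and $C=\varphi(x)\cap\varphi(y)$, so that $[\Delta]=A\sqcup C$ by (a). Since $u\ne x,y$, we have $|\varphi(u)| = d_G(u)$ and $|\overline{\varphi}(u)|=\Delta-d_G(u)$. Splitting $[\Delta]$ into $A$ and $C$ gives
\[
d_G(u)=|\varphi(u)\cap A|+|\varphi(u)\cap C|,\qquad |\varphi(u)\cap A| = |A|-|\overline{\varphi}(u)\cap A|.
\]
The hypothesis $d_G(x)+d_G(y)+d_G(u)<2\Delta+2$ is equivalent to $d_G(u)<|A|$, that is, $d_G(u)\le |A|-1$. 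Combining this with the two displayed identities yields
\[
|A|-|\overline{\varphi}(u)\cap A|+|\varphi(u)\cap C| \le |A|-1,
\]
and hence $|\overline{\varphi}(u)\cap A|\ge |\varphi(u)\cap C|+1 = |\varphi(x)\cap\varphi(y)\cap\varphi(u)|+1\ge 1$.

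Part (b) is therefore also pure counting once $\{x,y\}$ is known to be elementary; adjacency of $u$ is not even needed. The only substantive input is the disjointness $\overline{\varphi}(x)\cap\overline{\varphi}(y)=\emptyset$ from Lemma~\ref{lem:multi_fan}(a). There is no real obstacle beyond bookkeeping: in $G-e$ the degrees of $x,y$ drop by one while that of $u$ does not.
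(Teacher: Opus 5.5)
Your proof is correct and takes essentially the same approach the paper intends. The paper gives no proof of its own and defers to Stiebitz et al., Proposition~4.26, which is this counting argument: elementarity of $\{x,y\}$ splits $[\Delta]$ into $\overline{\varphi}(x)\cup\overline{\varphi}(y)$ and $\varphi(x)\cap\varphi(y)$, and the hypothesis $d_G(u)<|\overline{\varphi}(x)\cup\overline{\varphi}(y)|$ then forces the inequality in (b). Your multigraph-specific bound $\mu_G(xy)-1\le|\varphi(x)\cap\varphi(y)|$, obtained from the distinct colors on the remaining parallel edges, is right. So is your remark that (b) uses only $u\notin\{x,y\}$ and never the adjacency of $u$.
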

The proof is similar to that of \cite[Proposition~4.26]{SSTF2012GraphEC}.

\begin{lemma}\label{prop: 27}
Let $G$ be a $\Delta$-critical graph and let $e \in E_G(x,y)$. 
Then there are at least $\Delta - \sigma'(x,y) - \mu_G(xy) + 1$ edges $f = xz$ with $z \in N_G(x) \setminus \{y\}$ such that there exists a coloring $\varphi \in \mathcal{C}^{\Delta}(G - e)$ with $\varphi(f) \in \overline{\varphi}(y)$.
\end{lemma}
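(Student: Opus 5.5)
The plan is to fix one coloring $\varphi\in\mathcal{C}^{\Delta}(G-e)$ and extract, from a single colour count at $y$, $\Delta-\sigma'(x,y)-\mu_G(xy)+1$ distinct colours $\gamma$, each of which names a distinct edge $f_\gamma\in E_G(x)\setminus E_G(x,y)$. Then, for each such $\gamma$, we modify $\varphi$ so that $f_\gamma$ receives a colour missing at $y$. This follows the pattern of \cite[Proposition~4.27]{SSTF2012GraphEC}, with vertices replaced by edges throughout.

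\emph{Counting.} Let $\Gamma$ be the set of colours $\gamma$ such that either $\gamma\in\overline{\varphi}(y)$, or $\gamma\in\varphi(y)$ and the $\gamma$-edge at $y$ goes to a vertex $w\notin K_G(x,y)\cup\{x\}$. The vertex $y$ has $d_G(y)-1$ coloured edges. Of these, $\mu_G(xy)-1$ go to $x$ and $\sigma'(x,y)$ go into $K_G(x,y)$. Hence
\[
|\Gamma| \ge (\Delta+1-d_G(y)) + \bigl(d_G(y)-\mu_G(xy)-\sigma'(x,y)\bigr) = \Delta-\sigma'(x,y)-\mu_G(xy)+1 .
\]
Every $\gamma\in\Gamma$ is present at $x$. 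If $\gamma\in\overline{\varphi}(y)$, this holds because $\overline{\varphi}(x)\cap\overline{\varphi}(y)=\emptyset$. If $\gamma\in\overline{\varphi}(x)$ with $y_\gamma=w$, then $w\in Z_y(\varphi)\subseteq K_G(x,y)$, a contradiction. Let $f_\gamma=xz$ be the $\gamma$-edge at $x$. Then $z\ne y$: if $\gamma\in\overline{\varphi}(y)$ this is immediate, and otherwise the unique $\gamma$-edge at $y$ ends at $w\ne x$. Distinct colours give distinct edges, so it suffices to handle each $\gamma$ separately. The colours $\gamma\in\overline{\varphi}(y)$ are already done, with the coloring $\varphi$ itself.

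\emph{Recolouring for $\gamma\in\varphi(y)$, $y_\gamma=w\notin K_G(x,y)$.} Since $d_G(x)+d_G(y)+d_G(w)<2\Delta+2$, Lemma~\ref{prop: 26}(b) gives a colour $\delta\in\overline{\varphi}(w)\cap(\overline{\varphi}(x)\cup\overline{\varphi}(y))$.

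If $\delta\in\overline{\varphi}(y)$, we recolour the $\gamma$-edge $yw$ with $\delta$. The result is a coloring $\varphi'\in\mathcal{C}^{\Delta}(G-e)$ with $\gamma\in\overline{\varphi'}(y)$ and $\varphi'(f_\gamma)=\gamma$, as required.

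If $\delta\in\overline{\varphi}(x)$, we look at the $(\delta,\gamma)$-chains. First, $P_x(\delta,\gamma,\varphi)$ must contain $w$. Otherwise, swapping it makes $\gamma$ missing at $x$ while $yw$ keeps colour $\gamma$. If $y$ is not on this chain, then $w\in Z_y(\varphi')\subseteq K_G(x,y)$, a contradiction. If $y$ is on it, we use that $\{x,y\}\cup Z_y$ is elementary under the new coloring to reach the same contradiction. So $P_x(\delta,\gamma,\varphi)$ is a path with ends $x$ and $w$, which starts with $f_\gamma$ and ends with $yw$.

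We then exchange roles. Recolour $yw$ with $\delta$, which is legal at $w$, and now $y$ misses $\gamma$ but may carry two $\delta$-edges. Repair this by swapping the sub-chain from $y$ back towards $x$. Alternatively, first shift the uncoloured edge $e$ along the fan (uncolour $yw$ and colour $e$ with $\gamma$, in the style of Lemma~\ref{prop: 25}), and then apply Lemma~\ref{lem:multi_fan}(b) to the new fan at $y$ to force the needed chain endpoints.

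\emph{Main obstacle.} The hard step is the case $\delta\in\overline{\varphi}(x)$. We must produce a coloring in which the fixed edge $f_\gamma$ keeps a colour missing at $y$, and we must keep track of the fact that in a multigraph parallel edges $xz$ and $yw$ can carry the relevant colours. I would first try the fan-shifting route: recolour $e$ and uncolour $yw$, then use elementarity of multi-fans (Lemma~\ref{lem:multi_fan}(a),(b)) to decide which Kempe chain is safe to swap. The direct chain-swap argument is the fallback.
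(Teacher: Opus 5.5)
\textbf{Gap in the hard case.} Your counting step is correct and matches the paper, and so is the case $\delta\in\overline{\varphi}(y)$. The gap is the case $\delta\in\overline{\varphi}(x)$, which is the actual content of the lemma. You leave it open, and neither of your two suggested routes works.

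\emph{The fan-shifting move is not a legal recolouring.} After uncolouring $yw$, the colours missing at $y$ are $\overline{\varphi}(y)\cup\{\gamma\}$. But $\gamma\in\varphi(x)$, since it sits on $f_\gamma$, and $\overline{\varphi}(x)\cap\overline{\varphi}(y)=\emptyset$. So no colour is available for $e$; in particular $e$ cannot receive $\gamma$. This differs from Lemma~\ref{prop: 25}: there the moved edge carries a colour of $\overline{\varphi}(y)$, whereas here $yw$ is not even a fan edge. Indeed, $w$ misses $\delta\in\overline{\varphi}(x)$, so by Lemma~\ref{lem:multi_fan}(a) it lies in no multi-fan at $y$.

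\emph{The direct chain route also fails.} You correctly showed that $P:=P_x(\delta,\gamma,\varphi)$ runs from $x$ through $f_\gamma$ and ends at $w$ with the edge $yw$. Now recolour $yw$ with $\delta$ and then swap the subpath of $P$ from $y$ back to $x$. The net effect is exactly $\varphi/P$. In that colouring $f_\gamma$ is coloured $\delta$, and $y$ still sees both $\gamma$ (on $yy_\delta$) and $\delta$ (on $yw$). So $f_\gamma$ does not end up with a colour missing at $y$. The $(\delta,\gamma)$-chain is the wrong chain to manipulate: any repair along it drags the recolouring back to $f_\gamma$.

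\textbf{The missing idea.} Use a chain that avoids $f_\gamma$ and $yw$ altogether.
\begin{enumerate}
\item Pick any $\beta\in\overline{\varphi}(y)$; this set is nonempty because $d_G(y)\le\Delta$.
\item By Lemma~\ref{lem:multi_fan}(b), the $(\delta,\beta)$-chain $Q=P_x(\delta,\beta,\varphi)$ is a path with endvertices $x$ and $y$.
\item Since $w$ misses $\delta$ and $w\notin\{x,y\}$, the vertex $w$ would have to be an endvertex of $Q$ if it lay on $Q$. Hence $w\notin V(Q)$.
\item In $\varphi_1=\varphi/Q$, the colour $\delta$ is missing at both $y$ and $w$. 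The edges $f_\gamma$ and $yw$ still carry $\gamma$, because $\gamma\notin\{\delta,\beta\}$.
\item Recolour $yw$ with $\delta$. The result is a colouring in $\mathcal{C}^{\Delta}(G-e)$ in which $\gamma$ is missing at $y$ and $f_\gamma$ still has colour $\gamma$, as required.
\end{enumerate}
This is precisely the paper's argument for this case.
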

\begin{proof}
Let $\varphi \in \mathcal{C}^{\Delta}(G-e)$.
Let $\Gamma$ be the set of all colors $\gamma \in \varphi(x)$ such that either $\gamma \in \overline{\varphi}(y)$, or $\gamma\in \varphi(y)$ and $y_{\gamma,\varphi} \notin K(x,y)\cup \{x\}$. By definition, $\Gamma$ contains none of the colors on the remaining $\mu_G(xy)-1$ edges between $x$ and $y$. Moreover, since $Z_y(\varphi) \subseteq K(x, y)$, each color in $\overline{\varphi}(x)$ appears on an edge from $y$ to $K(x,y)$. Hence the number of colors $\gamma \in \varphi(x) \cap \varphi(y)$ such that $y_{\gamma,\varphi} \in K(x,y)$ is $\sigma'(x,y) - |\overline{\varphi}(x)|$. Therefore
\[
|\Gamma|=|\varphi(x)|-\left(\sigma'(x,y)-|\overline{\varphi}(x)|\right)-\left(\mu_G(xy)-1\right)=\Delta-\sigma'(x,y)-\mu_G(xy)+1.
\]
Let $\gamma \in \Gamma$, and let $z = x_{\gamma,\varphi}$. Let $f = xz \in E_G(x,z)$ be the edge with $\varphi(f)=\gamma$. We show that there exists a coloring $\varphi' \in \mathcal{C}^{\Delta}(G - e)$ such that $\varphi'(f) \in \overline{\varphi'}(y)$.

If $\gamma \in \overline{\varphi}(y)$, then we are done. Hence we may assume $\gamma \notin \overline{\varphi}(y)$. Then, the vertex $u = y_{\gamma,\varphi}$ exists and $u\notin K(x,y)\cup \{x\}$. Let $f'\in E_G(y, u)$ be the edge colored by $\gamma$. Clearly, $u \ne z$ and $d_G(x) + d_G(y) + d_G(u) < 2\Delta + 2$. By Lemma~\ref{prop: 26}$(b)$, $\overline{\varphi}(u) \cap (\overline{\varphi}(x) \cup \overline{\varphi}(y)) \neq \emptyset$. If there is a color $\alpha \in \overline{\varphi}(u) \cap \overline{\varphi}(y)$, then recolor the edge $f'$ with $\alpha$. The resulting coloring $\varphi'$ is proper and satisfies $\varphi'(f) \in \overline{\varphi'}(y)$.

Otherwise, there is a color $\alpha \in \overline{\varphi}(u) \cap \overline{\varphi}(x)$. Let $\beta \in \overline{\varphi}(y)$. Clearly, $\alpha, \beta, \gamma$ are distinct. 
Lemma~\ref{lem:multi_fan}$(b)$ implies that there is a path $P = P_x(\alpha, \beta, \varphi)$ joining $x$ and $y$. As $\alpha \notin \varphi(u)$, we know $u \notin V(P)$. Let $\varphi_1 = \varphi/P$. Then, $\alpha \in \overline{\varphi_1}(y) \cap \overline{\varphi_1}(u)$ and $\varphi_1(f) = \varphi_1(f') = \gamma$. We now recolor $f'$ by $\alpha$. The resulting coloring $\varphi'$ is proper and satisfies $\varphi'(f) \in \overline{\varphi'}(y)$.
\end{proof}

We have the following corollary from Lemmas~\ref{lem:degree}$(a)$ and~\ref{prop: 27}.

\begin{corollary}\label{cor: Z}
Let $G$ be a $\Delta$-critical graph, and let $e \in E_G(x, y)$. Let $Z$ be the set of all vertices $z \in N_G(x) \setminus \{y\}$ for which there exists a coloring $\varphi \in \mathcal{C}^{\Delta}(G - e)$ such that some edge $f \in E_G(x,z)$ satisfies $\varphi(f) \in \overline{\varphi}(y)$. Then $Z \subseteq K_G(y,x)$ and $|E_G(x,Z)| \ge \Delta - \sigma'(x,y) - \mu_G(xy)+1 \ge \Delta-d_G(y)+1$.
\end{corollary}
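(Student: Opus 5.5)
The statement to prove is Corollary~\ref{cor: Z}, and it splits into two independent claims. The first is the containment $Z\subseteq K_G(y,x)$; the second is the edge-count bound.

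For the containment, take $z\in Z$. By definition there is a coloring $\varphi\in\mathcal{C}^{\Delta}(G-e)$ and an edge $f\in E_G(x,z)$ with $\varphi(f)\in\overline{\varphi}(y)$. Applying the definition of $F_x(\varphi)$ (the set $F_y(\varphi)$ with the roles of $x$ and $y$ interchanged), we get $f\in F_x(\varphi)$. Hence $z\in Z_x(\varphi)$.

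We already noted in Section~\ref{sec:pre} that $Z_x(\varphi)\subseteq K_G(y,x)$. To make this explicit, Lemma~\ref{lem:degree}$(b)$ with the roles of $x,y$ swapped applies to the multi-fan $(e,y,f,z)$ at $x$. Alternatively, Lemma~\ref{lem:multi_fan}$(a)$ directly gives that $\{x,y,z\}$ is $\varphi$-elementary. Either route yields $|\overline{\varphi}(x)|+|\overline{\varphi}(y)|+|\overline{\varphi}(z)|\le\Delta$. Substituting $|\overline{\varphi}(x)|=\Delta+1-d_G(x)$, $|\overline{\varphi}(y)|=\Delta+1-d_G(y)$ and $|\overline{\varphi}(z)|=\Delta-d_G(z)$ gives $d_G(x)+d_G(y)+d_G(z)\ge 2\Delta+2$. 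Since $z\in N_G(x)\setminus\{y\}$, this means $z\in K_G(y,x)$.

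For the edge count, Lemma~\ref{prop: 27} supplies at least $\Delta-\sigma'(x,y)-\mu_G(xy)+1$ distinct edges $f=xz$ with $z\in N_G(x)\setminus\{y\}$, each of which is admissible for some coloring. Every endpoint $z$ of such an edge lies in $Z$ by definition. So all these edges lie in $E_G(x,Z)$, which gives the first inequality.

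For the second inequality we need $\sigma'(x,y)+\mu_G(xy)\le d_G(y)$. This holds because $\sigma'(x,y)=|E_G(y,K_G(x,y))|$ counts edges at $y$ whose other ends lie in $N_G(y)\setminus\{x\}$. These edges are disjoint from the $\mu_G(xy)$ edges joining $x$ and $y$, and together the two families are at most all $d_G(y)$ edges at $y$.

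The only point needing care is the first claim: we must check that the coloring witnessing $z\in Z$ really produces a multi-fan, or an elementary set, containing $z$. Condition (F2) is immediate for the two-edge sequence $(e,y,f,z)$, since $\varphi(f)\in\overline{\varphi}(y)$. Beyond that, everything is direct bookkeeping.
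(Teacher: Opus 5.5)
Your proof is correct and matches the paper's implicit argument. The containment comes from $Z=\bigcup_{\varphi}Z_x(\varphi)$ together with the $\varphi$-elementarity of $\{x,y,z\}$: the paper cites Lemma~\ref{lem:degree}$(a)$, while you apply Lemma~\ref{lem:multi_fan}$(a)$ to the multi-fan $(e,y,f,z)$, which is the same underlying fact. The count comes from Lemma~\ref{prop: 27}, and the last inequality from $\sigma'(x,y)+\mu_G(xy)\le d_G(y)$.

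Your Lemma~\ref{lem:multi_fan}$(a)$ route is the cleaner of your two options. Lemma~\ref{lem:degree}$(b)$ speaks about all of $Z_x(\varphi)$, so isolating a single $z$ needs the extra remark that $d_G(z')\le\Delta$ for the other members.
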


Observe that $Z = \bigcup_{\varphi \in \mathcal{C}^{\Delta}(G-e)} Z_x(\varphi)$.
Now we show Lemma~\ref{lem: sigma}.

\noindent \textbf{Proof of Lemma~\ref{lem: sigma}.}

Let $Z$ be the set defined in Corollary~\ref{cor: Z}.

It remains to show that $\sigma'(x,y) + \sigma'(x,z) \ge 2\Delta - d_G(x)-2\mu+2$ for all vertices $z \in Z$.

\begin{claim}\label{claim:1}
Let $e \in E_G(x, y)$ and let $\varphi \in \mathcal{C}^{\Delta}(G - e)$. Let $z = x_{\alpha, \varphi}$, where $\alpha\in \overline{\varphi}(y)$, and let $\beta \in \varphi(x) \setminus \{\alpha\}$. Then the following statements hold:
\begin{enumerate}

\item[(a)] If $\beta \in \overline{\varphi}(y)$, then $z_{\beta}$ is well-defined and either $z_{\beta} = x$ or  $z_{\beta} \in K(x,z)$.
\item[(b)] If $\beta \in \overline{\varphi}(z)$, then $y_{\beta}$ is well-defined and either $y_{\beta} = x$ or $y_{\beta} \in K(x,y)$.
\end{enumerate}
\end{claim}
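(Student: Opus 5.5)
The plan is to reduce both parts to Lemma~\ref{lem:ShortK_Path}$(c)$. Each part will use a Kierstead path on four vertices whose last three vertices are $x$, the middle vertex ($z$ or $y$), and the vertex in question. Well-definedness will come from elementarity of a small multi-fan via Lemma~\ref{lem:multi_fan}$(a)$. Throughout, let $e'\in E_G(x,z)$ be the edge with $\varphi(e')=\alpha$; it exists since $z=x_{\alpha,\varphi}$. The sequence $(e,y,e',z)$ is a multi-fan at $x$ with respect to $e$ and $\varphi$, because $\varphi(e')=\alpha\in\overline{\varphi}(y)$. Hence $\{x,y,z\}$ is $\varphi$-elementary.

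For (a), let $\beta\in\overline{\varphi}(y)$. Elementarity gives $\beta\notin\overline{\varphi}(z)$, so $z_\beta$ is well-defined. Assume $z_\beta\neq x$, and let $g\in E_G(z,z_\beta)$ be the edge with $\varphi(g)=\beta$. Since $\beta$ is missing at $y$, we have $z_\beta\neq y$, so the vertices $y,x,z,z_\beta$ are distinct. Consider
\[
K=(y,e,x,e',z,g,z_\beta)
\]
with respect to $e$ and $\varphi$. Condition (K2) holds because $\varphi(e')=\alpha\in\overline{\varphi}(y)$ and $\varphi(g)=\beta\in\overline{\varphi}(y)$. Lemma~\ref{lem:ShortK_Path}$(c)$ then gives $d_G(x)+d_G(z)+d_G(z_\beta)\ge 2\Delta+2$. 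As $z_\beta\in N_G(z)\setminus\{x\}$, this says exactly that $z_\beta\in K(x,z)$.

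For (b), let $\beta\in\overline{\varphi}(z)$ with $\beta\in\varphi(x)$. Elementarity gives $\beta\notin\overline{\varphi}(y)$, so $y_\beta$ is well-defined. Switch to the coloring $\varphi'$ obtained by uncoloring $e'$ and coloring $e$ with $\alpha$. This is proper because $\alpha$ is missing at $y$, and $\alpha$ was present at $x$ only on $e'$. So $\varphi'\in\mathcal{C}^\Delta(G-e')$, and $\alpha\in\overline{\varphi'}(z)$. Also $\beta$ is still missing at $z$, and the $\beta$-edge $g\in E_G(y,y_\beta)$ keeps its color. Assume $y_\beta\neq x$. Since $\beta$ is missing at $z$, we have $y_\beta\neq z$, so the vertices $z,x,y,y_\beta$ are distinct. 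Consider
\[
K'=(z,e',x,e,y,g,y_\beta)
\]
with respect to $e'$ and $\varphi'$. Condition (K2) holds since $\varphi'(e)=\alpha\in\overline{\varphi'}(z)$ and $\varphi'(g)=\beta\in\overline{\varphi'}(z)$. Lemma~\ref{lem:ShortK_Path}$(c)$ yields $d_G(x)+d_G(y)+d_G(y_\beta)\ge 2\Delta+2$, i.e.\ $y_\beta\in K(x,y)$.

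The only delicate point is (b). There the path must be rooted at $z$ rather than at $y$, which requires the recoloring $\varphi'$ in which $z$ becomes an endpoint of the uncolored edge. The things to check carefully are that this swap is proper and that it preserves the colors $\beta$ at $z$ and on $g$. Both follow because only the edges $e$ and $e'$ change, and neither carries $\beta$.
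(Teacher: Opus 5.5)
Your proposal is correct and follows the paper's proof. Part (a) uses the same Kierstead path $(y,e,x,e',z,g,z_\beta)$. Part (b) uses the same recoloring: uncolor $e'$ and color $e$ with $\alpha$. The paper then applies (a) with the roles of $y$ and $z$ exchanged, which amounts exactly to your explicitly written path $K'=(z,e',x,e,y,g,y_\beta)$ with respect to $e'$ and $\varphi'$.
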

\begin{proof}
Let $f \in E_G(x,z)$ satisfy $\varphi(f) = \alpha$.

Suppose that $\beta \in \overline{\varphi}(y)$. Since $\{x, y, z\}$ is $\varphi$-elementary, $z_\beta$ is well-defined. Clearly, $z_\beta \ne y$. Let $f' \in E_G(z,z_\beta)$ be such that $\varphi(f') = \beta$. Assume $z_\beta \ne x$. Then $(y, e, x, f, z, f', z_\beta)$ is a Kierstead path with respect to $e$ and $\varphi$. By Lemma~\ref{lem:ShortK_Path}(c), $d_G(x) + d_G(z) + d_G(z_\beta) \ge 2\Delta + 2$, so $z_\beta \in K(x,z)$.

Suppose that $\beta \in \overline{\varphi}(z)$. Similarly, since $\{x, y, z\}$ is $\varphi$-elementary, $y_\beta$ is well-defined and $z \notin \{x_\beta, y_\beta\}$. Let $\varphi'$ be the coloring obtained from $\varphi$ by uncoloring $f$ and coloring $e$ with $\alpha$. Then $\varphi' \in \mathcal{C}^{\Delta}(G-f)$ satisfies $\alpha, \beta \in \overline{\varphi'}(z)$, $\beta \in \varphi'(x)$ and $\varphi'(e) = \alpha$. Hence we can apply statement $(a)$ to the coloring $\varphi'$ with the roles of $z$ and $y$ exchanged. Thus either $y_\beta = x$ or $y_\beta \in K(x, y)$.
\end{proof}

\begin{claim}\label{claim:2}
    Let $e =xy\in E_G(x, y)$ and let $\varphi \in \mathcal{C}^{\Delta}(G - e)$. Let $z = x_{\alpha,\varphi}$, where $\alpha \in \overline{\varphi}(y)$, and let $\beta \in \varphi(x)\cap \varphi(y)\cap \varphi(z)$. Then the following statements hold:
\begin{enumerate}
\item[(a)] If $z_\beta \notin K(x,z)$ and $x \notin \{y_\beta, z_\beta\}$, then $z_\beta \neq y$ and, moreover, $y_\beta \in K(x,y)$ or there is a color $\gamma$ such that $\gamma \in \overline{\varphi}(x)\cap \overline{\varphi}(z_\beta)\cap \varphi(z)\cap \varphi(y_\beta)$.
\item[(b)] If $y_\beta \notin K(x,y)$ and $x \notin \{y_\beta, z_\beta\}$, then $y_\beta \neq z$ and, moreover, $z_\beta \in K(x,z)$ or there is a color $\gamma'$ such that $\gamma' \in \overline{\varphi}(x)\cap \overline{\varphi}(y_\beta)\cap \varphi(y)\cap \varphi(z_\beta)$.
\end{enumerate}

\end{claim}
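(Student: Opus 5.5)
The plan is to prove (a) directly; (b) then follows by the symmetry between $y$ and $z$. That symmetry comes from the coloring $\varphi'\in\mathcal{C}^{\Delta}(G-f)$ obtained by uncoloring $f\in E_G(x,z)$ (with $\varphi(f)=\alpha$) and coloring $e$ with $\alpha$, exactly as in the proof of Claim~\ref{claim:1}(b). Throughout, write $u=y_\beta$ and $w=z_\beta$, both different from $x$ by hypothesis. Since $(e,y,f,z)$ is a multi-fan at $x$, Lemma~\ref{lem:multi_fan}(a) says $\{x,y,z\}$ is $\varphi$-elementary, so
\[
|\overline{\varphi}(x)|+|\overline{\varphi}(y)|+|\overline{\varphi}(z)|\le\Delta, \qquad\text{i.e.}\qquad d_G(x)+d_G(y)+d_G(z)\ge 2\Delta+2 .
\]

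\emph{Step 1: $w\neq y$.} Suppose $w=y$. Then $y\in N_G(z)\setminus\{x\}$, and the inequality above says precisely that $y\in K(x,z)$. This contradicts $w\notin K(x,z)$.

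\emph{Step 2: reduce to $u\notin K(x,y)$.} Assume $u\notin K(x,y)$; we must produce $\gamma\in\overline{\varphi}(x)\cap\overline{\varphi}(w)\cap\varphi(z)\cap\varphi(u)$. Since $w\notin K(x,z)$, we have $d_G(x)+d_G(z)+d_G(w)<2\Delta+2$. Apply Lemma~\ref{prop: 26}(b) to the edge of color $\alpha$ in $E_G(x,z)$ and the coloring $\varphi'\in\mathcal{C}^\Delta(G-f)$, noting $\overline{\varphi'}(x)=\overline{\varphi}(x)$, $\overline{\varphi'}(z)=\overline{\varphi}(z)\cup\{\alpha\}$, and $\overline{\varphi'}(w)=\overline{\varphi}(w)$. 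This gives a color $\delta\in\overline{\varphi}(w)\cap(\overline{\varphi}(x)\cup\overline{\varphi}(z)\cup\{\alpha\})$. We split into three cases.

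\emph{Case $\delta\in\overline{\varphi}(z)$.} Recolor the $\beta$-edge $zw$ with $\delta$. This keeps $\varphi$ proper, leaves $\alpha\in\overline{\varphi}(y)$, $z=x_\alpha$ and $u=y_\beta$ unchanged, and makes $\beta\in\overline{\varphi}(z)$. Claim~\ref{claim:1}(b) then forces $u=x$ or $u\in K(x,y)$, a contradiction.

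\emph{Case $\delta=\alpha$.} Do the same recoloring inside $\varphi'$: recolor $zw$ from $\beta$ to $\alpha$. Now $\beta$ is missing at $z$ in a coloring of $G-f$ in which $e$ is colored $\alpha$ and $\alpha$ is missing at $z$. Apply Claim~\ref{claim:1}(a) with the roles of $y$ and $z$ exchanged (so the uncolored edge is $f$ and the fan is $(f,z,e,y)$ at $x$). Since $\beta\in\overline{\varphi'}(z)$ and the $\beta$-neighbor of $y$ is still $u\ne x$, this gives $u\in K(x,y)$, again a contradiction.

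\emph{Case $\delta=\gamma\in\overline{\varphi}(x)$.} Then $\gamma\in\varphi(z)$ automatically, because $\{x,z\}$ is elementary. It remains to rule out $\gamma\in\overline{\varphi}(u)$. If $\gamma$ were missing at both $x$ and $u$, I would imitate the proof of Lemma~\ref{prop: 27}. Take $\alpha'\in\overline{\varphi}(y)$ and swap along the $(\gamma,\alpha')$-chain $P$ from $x$ to $y$ given by Lemma~\ref{lem:multi_fan}(b); this makes $\gamma$ missing at $y$ and at $u$. Then recolor the $\beta$-edge $yu$ with $\gamma$, so that $\beta\in\overline{\varphi}(y)$ while $z_\beta=w$ is unchanged. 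Claim~\ref{claim:1}(a) then gives $w=x$ or $w\in K(x,z)$, a contradiction.

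\emph{Main obstacle.} The delicate point is this last Kempe change: the chain $P$ must not alter $z=x_\alpha$, the $\beta$-edges at $y$ and $z$, or $w$. Since $\beta\notin\{\gamma,\alpha'\}$, the $\beta$-edges are safe. To protect the $\alpha$-edge at $x$, I would first try choosing $\alpha'\neq\alpha$ when $|\overline{\varphi}(y)|\ge2$. If $\overline{\varphi}(y)=\{\alpha\}$, I would instead work in $\varphi'$, where $\alpha$ is on $e$, and run the chain from $x$ to $z$, using that $\{x,y,z\}$ stays elementary. Checking that $w$ (which misses $\gamma$) is not an interior vertex of $P$ and that $u\notin V(P)$ is where I expect the case analysis to concentrate; $u\notin V(P)$ should follow because $\gamma\in\overline{\varphi}(u)$ makes $u$ an endpoint of its own $(\gamma,\alpha')$-chain, which is distinct from $P$ since $P$ ends at $x$ and $y$.
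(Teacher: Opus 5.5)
Most of your outline is sound. Step~1, the case $\delta\in\overline{\varphi}(z)$ via Claim~\ref{claim:1}(b), and the $(\gamma,\alpha')$-swap followed by recoloring $yu$ with $\gamma$ and invoking Claim~\ref{claim:1}(a) are all correct whenever some $\alpha'\in\overline{\varphi}(y)\setminus\{\alpha\}$ exists. However, your case $\delta=\alpha$ is invalid as written. Once you recolor $zw$ from $\beta$ to $\alpha$ in $\varphi'$, the color $\alpha$ is present at $z$, so the color $\alpha$ of $e$ is no longer missing at $z$. Then $(f,z,e,y)$ is not a multi-fan, and Claim~\ref{claim:1}(a) cannot be applied. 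Fortunately this case is unnecessary. The color $\beta$ lies in none of $\overline{\varphi}(x),\overline{\varphi}(z),\overline{\varphi}(w)$, and their sizes sum to $3\Delta+1-d_G(x)-d_G(z)-d_G(w)\ge\Delta$. So counting directly in $\varphi$ already gives $\delta\in\overline{\varphi}(w)\cap(\overline{\varphi}(x)\cup\overline{\varphi}(z))$. Alternatively, Lemma~\ref{prop: 26}(b) gives at least two colors, because $\beta\in\varphi'(x)\cap\varphi'(z)\cap\varphi'(w)$.

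The real gap is the final subcase, $\gamma\in\overline{\varphi}(x)\cap\overline{\varphi}(u)\cap\overline{\varphi}(w)$, which you do not close when $d_G(y)=d_G(z)=\Delta$. Then $\overline{\varphi}(y)=\{\alpha\}$ and $\overline{\varphi}(z)=\emptyset$, so your only partner color is $\alpha$. In $\varphi$ the $(\gamma,\alpha)$-chain from $x$ begins with $f$, and in $\varphi'$ it begins with $e$. Swapping it recolors exactly the fan edge. After you free $\beta$ by recoloring $yu$ (resp.\ $zw$) with $\gamma$, the new color of that fan edge is present at $y$ (resp.\ $z$), so Claim~\ref{claim:1} is unavailable. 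Your $\varphi'$ variant does work when $\overline{\varphi}(z)\neq\emptyset$: swap a $(\gamma,\alpha'')$-chain with $\alpha''\in\overline{\varphi}(z)$, recolor $zw$ with $\gamma$, and Claim~\ref{claim:1}(a) with $y,z$ exchanged gives $u\in K(x,y)$. It does not cover the doubly saturated case.

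The missing idea is to swap a chain in the colors $\beta$ and $\gamma$ instead. Both $u$ and $w$ miss $\gamma$ and are incident with $\beta$-edges, so each is an end of its $(\beta,\gamma)$-chain; moreover $u\neq w$. Hence $P=P_x(\beta,\gamma,\varphi)$, which has $x$ as one end, contains at most one of $yu$ and $zw$. After swapping $P$, $\beta$ is missing at $x$, while $f$ and $\alpha\in\overline{\varphi}(y)$ are untouched. If $yu$ keeps color $\beta$, the multi-fan $(e,x,yu,u)$ at $y$ gives $u\in K(x,y)$. Otherwise $zw$ keeps color $\beta$, and Lemma~\ref{prop: 25} gives $w\in K(x,z)$. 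This argument uses $\gamma\in\overline{\varphi}(w)$ as well as $\gamma\in\overline{\varphi}(u)$, and needs no spare missing color at $y$ or $z$.
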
 
\begin{proof}
For the proof of (a), assume that $z_\beta \notin K(x,z)$ and $x \notin \{y_\beta, z_\beta\}$. Moreover, $x_\beta, y_\beta, z_\beta$ are pairwise distinct. If $z_\beta = y$, then Lemma~\ref{lem:multi_fan}$(a)$ implies that $d_G(x) + d_G(z) + d_G(z_\beta) \ge 2\Delta + 2$. Thus $z_\beta \in K(x,z)$, a contradiction. Hence $z_\beta \notin \{x, y\}$, and $y_\beta \notin \{x, z\}$.

Since $z_\beta \notin K(x,z)$, $d_G(x) + d_G(z) + d_G(z_\beta) < 2\Delta + 2$. By Lemma~\ref{lem:multi_fan}$(a)$, $\overline{\varphi}(x) \cap \overline{\varphi}(z) = \emptyset$. We claim that $\overline{\varphi}(z_\beta) \cap (\overline{\varphi}(x) \cup \overline{\varphi}(z)) \neq \emptyset$, for otherwise, as $\beta \notin \overline{\varphi}(x) \cup \overline{\varphi}(z) \cup \overline{\varphi}(z_\beta)$, we would have 
\begin{align*}
 \Delta - 1 & \ge |\overline{\varphi}(x) \cup \overline{\varphi}(z) \cup \overline{\varphi}(z_\beta)| \\ &   = |\overline{\varphi}(x)| + |\overline{\varphi}(z)| + |\overline{\varphi}(z_\beta)|  \\ & = 3\Delta + 1 - d_G(x) - d_G(z) - d_G(z_\beta) \\ & > \Delta - 1,
\end{align*}
 a contradiction.

If there is a color $\gamma \in \overline{\varphi}(z) \cap \overline{\varphi}(z_\beta)$, then recolor the edge $zz_\beta$ with $\gamma$, color the edge $e$ with $\alpha$, and uncolor the edge $xz$. This results in a coloring $\varphi' \in \mathcal{C}^{\Delta}(G - xz)$ such that $\alpha, \beta \in \overline{\varphi'}(z)$, $\varphi'(xy) = \alpha$, and $\varphi'(yy_\beta) = \beta$. Then $(z, zx, x, xy, y, yy_\beta, y_\beta)$ is a Kierstead path with respect to $xz$ and $\varphi'$. By Lemma~\ref{lem:ShortK_Path}$(c)$, we have $d_G(x) + d_G(y) + d_G(y_\beta) \ge 2\Delta + 2$. Therefore, $y_\beta \in K(x,y)$.

Now we may assume that $\overline{\varphi}(z) \cap \overline{\varphi}(z_\beta) = \emptyset$. Since $\overline{\varphi}(z_\beta) \cap (\overline{\varphi}(x) \cup \overline{\varphi}(z)) \neq \emptyset$, this implies that there is a color $\gamma \in \overline{\varphi}(x) \cap \overline{\varphi}(z_\beta)$. Then we have $\gamma \in \varphi(z)\setminus\{\beta\}$, and by Lemma~\ref{lem:multi_fan}$(a)$, $\gamma \neq \alpha$. If $\gamma \in \varphi(y_\beta)$, we are done. Hence $\gamma \notin \varphi(y_\beta)$, and we consider the chain $P = P_x(\beta,\gamma,\varphi)$ and the coloring $\varphi' = \varphi/P$. 
Note that $\varphi' \in \mathcal{C}^{\Delta}(G - xy)$ satisfies $\beta \in \overline{\varphi'}(x)$, $\alpha \in \overline{\varphi'}(y)$, and $\varphi'(xz) = \alpha$.

If $\varphi'(yy_\beta) = \varphi(yy_\beta) = \beta$, then Lemma~\ref{lem:multi_fan}$(a)$ implies that $y_\beta \in K(x,y)$ and we are done. Assume $\varphi'(yy_\beta) \neq \varphi(yy_\beta)$. If $\varphi'(yy_\beta) \ne \beta$, then $yy_\beta \in E(P)$. 
Note that $\gamma \in \overline{\varphi}(y_\beta) \cap \overline{\varphi}(z_\beta)$, which implies that $y_\beta$ is the other endvertex of $P$, and so $zz_\beta \notin E(P)$. Hence $\varphi'(zz_\beta) = \varphi(zz_\beta) = \beta$. Since $\beta \in \overline{\varphi'}(x)$ and $\varphi'(xz) = \alpha \in \overline{\varphi'}(y)$, Lemma~\ref{prop: 25} implies that $z_\beta \in K(x,z)$, a contradiction. This completes the proof of $(a)$. Statement $(b)$ follows once we uncolor $xz$ and color $xy$ with $\alpha$.
\end{proof}

\begin{claim}\label{claim:3}
Let $e=xy \in E_G(x, y)$, and let $\varphi \in \mathcal{C}^{\Delta}(G - e)$. Let $z = x_{\alpha,\varphi}$, where $\alpha \in \overline{\varphi}(y)$, and let $\beta \in \varphi(x)\setminus \{\alpha\}$.
Assume that, whenever defined, neither $y_\beta$ nor $z_\beta$ is equal to $x$. Then $y_\beta \in K(x,y)$ or $z_\beta \in K(x,z)$.
\end{claim}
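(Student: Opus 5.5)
The plan is to split according to where the color $\beta\in\varphi(x)\setminus\{\alpha\}$ sits relative to $y$ and $z$. By Lemma~\ref{lem:multi_fan}$(a)$ the set $\{x,y,z\}$ is $\varphi$-elementary, since $z=x_{\alpha,\varphi}$ with $\alpha\in\overline{\varphi}(y)$ makes $(e,y,f,z)$ a multi-fan at $x$. Hence $\beta$ is missing at no more than one of $y$ and $z$. This gives three cases.

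\emph{Case 1: $\beta\in\overline{\varphi}(y)$.} Claim~\ref{claim:1}$(a)$ gives that $z_\beta$ is defined and is either $x$ or a vertex of $K(x,z)$. The hypothesis excludes $z_\beta=x$, so $z_\beta\in K(x,z)$.

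\emph{Case 2: $\beta\in\overline{\varphi}(z)$.} Claim~\ref{claim:1}$(b)$ similarly gives $y_\beta\in K(x,y)$.

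\emph{Case 3: $\beta\in\varphi(x)\cap\varphi(y)\cap\varphi(z)$.} This is the real case. Here both $y_\beta$ and $z_\beta$ are defined and differ from $x$ by hypothesis. Suppose for contradiction that $y_\beta\notin K(x,y)$ and $z_\beta\notin K(x,z)$.
\begin{enumerate}[label=(\roman*)]
\item Claim~\ref{claim:2}$(a)$ yields $z_\beta\neq y$ and a color $\gamma\in\overline{\varphi}(x)\cap\overline{\varphi}(z_\beta)\cap\varphi(z)\cap\varphi(y_\beta)$.
\item Claim~\ref{claim:2}$(b)$ yields $y_\beta\neq z$ and a color $\gamma'\in\overline{\varphi}(x)\cap\overline{\varphi}(y_\beta)\cap\varphi(y)\cap\varphi(z_\beta)$.
\item Since $\gamma\in\varphi(y_\beta)$ and $\gamma'\notin\varphi(y_\beta)$, we get $\gamma\ne\gamma'$.
\item The vertices $x,y,z,y_\beta,z_\beta$ are pairwise distinct, and $\gamma,\gamma'\notin\{\alpha,\beta\}$ because $\overline{\varphi}(x)$ is disjoint from $\overline{\varphi}(y)\ni\alpha$ and $\beta\in\varphi(x)$.
\end{enumerate}

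The aim is to exploit this symmetric configuration by a Kempe change, reducing to a situation already covered by Claim~\ref{claim:1} or Claim~\ref{claim:2}. Here $\gamma$ is missing at both $x$ and $z_\beta$, and $\gamma'$ is missing at both $x$ and $y_\beta$. The first move to try mirrors the proof of Claim~\ref{claim:2}: take $P=P_x(\beta,\gamma,\varphi)$ and set $\varphi'=\varphi/P$. The endpoints of $P$ are $x$ and at most one other vertex missing $\beta$ or $\gamma$. Two observations control its other end:
\begin{itemize}
\item $y_\beta$ sees both $\beta$ and $\gamma$, so it is interior to any $(\beta,\gamma)$-chain through it.
\item $z_\beta$ misses $\gamma$, so it is an endpoint of its chain.
\end{itemize}
The plan then splits on whether $P$ ends at $z_\beta$.
\begin{itemize}
\item If $P$ does not end at $z_\beta$, then $zz_\beta\notin E(P)$ and $\varphi'(zz_\beta)=\beta\in\overline{\varphi'}(x)$. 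The coloring $\varphi'$ is in $\mathcal{C}^\Delta(G-e)$ with $\alpha\in\overline{\varphi'}(y)$ and $\varphi'(xz)=\alpha$. Lemma~\ref{prop: 25} then forces $z_\beta\in K(x,z)$, a contradiction.
\item If $P$ does end at $z_\beta$, I would instead run the symmetric argument with $Q=P_x(\beta,\gamma',\varphi)$, whose other endpoint must then be $y_\beta$ or something else. If $Q$ does not end at $y_\beta$, the edge $yy_\beta$ keeps color $\beta\in\overline{\varphi/Q}(x)$. Applying Claim~\ref{claim:1}$(b)$ after the recoloring used in Claim~\ref{claim:2}$(b)$ (uncolor $xz$, color $xy$ with $\alpha$) gives $y_\beta\in K(x,y)$.
\end{itemize}

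The main obstacle is the residual situation where $P$ runs from $x$ to $z_\beta$ and $Q$ runs from $x$ to $y_\beta$ simultaneously. Both chains start at $x$ with the same $\beta$-edge $xx_\beta$, and this must be ruled out. Here I would first try swapping a $(\gamma,\gamma')$-chain at $z_\beta$ (or at $y_\beta$), which does not touch $x$ since $x$ misses both colors. This turns $\gamma'$ into a color missing at $z_\beta$ and at $x$ while keeping $\beta$ on $zz_\beta$. It should make $P_x(\beta,\gamma')$ end at $z_\beta$, competing with $y_\beta$ as its endpoint, and so produce the contradiction. Failing that, I would look for a Kierstead path $(y,e,x,xz,z,zz_\beta,z_\beta)$ and use Lemma~\ref{lem:ShortK_Path}$(b)$, which says $|\overline{\varphi}(z_\beta)\cap(\overline{\varphi}(y)\cup\overline{\varphi}(x))|\le1$. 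Combined with the degree counting from Lemma~\ref{prop: 26}, this would force $z_\beta$ into $K(x,z)$.
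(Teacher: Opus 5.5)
Your reduction via Claim~\ref{claim:1}, the extraction of $\gamma,\gamma'$ from Claim~\ref{claim:2}, and the two easy subcases are sound. One citation needs fixing: in the $Q$-subcase, Claim~\ref{claim:1}(b) does not apply, because after swapping $Q$ the color $\beta$ is missing at $x$. Instead, uncolor $xz$, color $xy$ with $\alpha$, and apply Lemma~\ref{lem:ShortK_Path}(c) to $(z,zx,x,xy,y,yy_\beta,y_\beta)$.

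The genuine gap is the residual configuration, in which $P_x(\beta,\gamma,\varphi)$ ends at $z_\beta$ and $P_x(\beta,\gamma',\varphi)$ ends at $y_\beta$. This is the entire content of the claim, and you leave it open. Swapping the $(\gamma,\gamma')$-chain $R$ starting at $z_\beta$ helps only when $R$ does not end at $y_\beta$. In that case $x$, $y_\beta$, $z_\beta$ are all ends of $(\beta,\gamma')$-chains, so $P_x(\beta,\gamma')$ misses one of $y_\beta, z_\beta$, and your earlier argument applies. But $y_\beta$ misses $\gamma'$ and sees $\gamma$, so nothing prevents $R$ from running from $z_\beta$ to $y_\beta$. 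Then the swap merely exchanges the roles of $\gamma$ and $\gamma'$ and reproduces the same configuration. Your fallback is circular: $(y,e,x,xz,z,zz_\beta,z_\beta)$ is a Kierstead path only if $\beta$ is missing at $x$ or $y$, which is exactly what you are trying to produce. The root problem is that $y$ and $z$ both see $\beta,\gamma,\gamma'$, so nothing constrains where these chains end.

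The missing idea, which is how the paper proceeds, is to make $\beta$ missing at $y$, so that Lemma~\ref{lem:multi_fan}(b) forces the relevant chains from $x$ to end at $y$.

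First arrange $\alpha\in\overline{\varphi}(y_\beta)$. If this fails, Claim~\ref{claim:2}(b) gives $\gamma'$, and $P_x(\alpha,\gamma',\varphi)$ is an $x$--$y$ path not containing $y_\beta$. Swapping $P_{y_\beta}(\alpha,\gamma',\varphi)$ then achieves it without touching $x,y,z$ or any $\beta$-edge. Afterwards, take $\gamma,\gamma'$ afresh from Claim~\ref{claim:2}.

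Next recolor $yy_\beta$ with $\alpha$. Now $\beta$ is missing at both $y$ and $y_\beta$, so $P_x(\beta,\gamma)$ is an $x$--$y$ path avoiding $z$, $z_\beta$, $y_\beta$. Swapping it makes $\beta$ missing at $x$ and $\gamma$ missing at $y$. Hence $P_x(\gamma,\gamma')$ is an $x$--$y$ path avoiding $y_\beta$ and $z_\beta$.

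Interchange $\gamma$ and $\gamma'$ on every edge off this path; this makes $\gamma$ missing at both $y$ and $y_\beta$. Recoloring $yy_\beta$ with $\gamma$ then gives a coloring of $G-e$ in which $\alpha$ is missing at $y$, $\beta$ is missing at $x$, $xz$ is colored $\alpha$, and $zz_\beta$ is colored $\beta$. So $(y,yx,x,xz,z,zz_\beta,z_\beta)$ is a Kierstead path, and Lemma~\ref{lem:ShortK_Path}(c) gives $z_\beta\in K(x,z)$, the desired contradiction.
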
 
\begin{proof}
By Claim~\ref{claim:1}, we may assume that $\beta \in \varphi(y)\cap \varphi(z)$, as otherwise $y_\beta \in K(x,y)$ or $z_\beta \in K(x,z)$. Thus $y_\beta, z_\beta$ are well-defined and $x \notin \{y_\beta, z_\beta\}$. Then, the vertices $x,y,z,x_\beta,y_\beta,z_\beta$ are distinct.
Suppose, on the contrary, that $y_\beta \notin K(x,y)$ and $z_\beta \notin K(x,z)$. 
We may assume that $\alpha \in \overline{\varphi}(y)\cap \overline{\varphi}(y_\beta)$. Indeed, if $\alpha \in \varphi(y_\beta)$, then since $\beta \in \varphi(x)\cap \varphi(y)\cap \varphi(z)$, by Claim~\ref{claim:2}$(b)$, there exists a color $\gamma' \in \overline{\varphi}(x)\cap \overline{\varphi}(y_\beta)\cap \varphi(y)\cap \varphi(z_\beta)$. Clearly, $\gamma'\notin\{\alpha,\beta\}$. By Lemma~\ref{lem:multi_fan}$(b)$, the chain $P=P_x(\alpha,\gamma',\varphi)$ is a path joining $x$ and $y$, and hence $z \in V(P)$ and $y_\beta \notin V(P)$. The chain $P'=P_{y_\beta}(\alpha,\gamma',\varphi)$ is disjoint from $P$. Interchanging colors $\alpha$ and $\gamma'$ along $P'$ yields a coloring $\varphi'$ satisfying the conditions of this claim, with $\alpha \in \overline{\varphi'}(y)\cap \overline{\varphi'}(y_\beta)$. 

Now assume that $\alpha \in \overline{\varphi}(y)\cap \overline{\varphi}(y_\beta)$.
By Claim~\ref{claim:2}, there exist colors $\gamma \in \overline{\varphi}(x)\cap \overline{\varphi}(z_\beta)\cap \varphi(z)\cap \varphi(y_\beta)$ and $\gamma' \in \overline{\varphi}(x)\cap \overline{\varphi}(y_\beta)\cap \varphi(y)\cap \varphi(z_\beta)$. In particular, $\alpha,\beta,\gamma,\gamma'$ are all distinct and $\gamma, \gamma' \in \varphi(y) \cap \varphi(z)$ by Lemma~\ref{lem:multi_fan}$(a)$. Recolor $yy_\beta$ with $\alpha$ to obtain a coloring $\varphi_1 \in \mathcal{C}^{\Delta}(G-xy)$. Then $\beta \in \overline{\varphi_1}(y)\cap \overline{\varphi_1}(y_\beta)$, and $P_x(\beta,\gamma,\varphi_1)$ is a path joining $x$ and $y$. Let $\varphi_2=\varphi_1/P_x(\beta,\gamma,\varphi_1)$. Then $\beta \in \overline{\varphi_2}(x)\cap\overline{\varphi_2}(y_\beta)$, $\gamma \in \overline{\varphi_2}(y)$, $\varphi_2(xz)=\varphi_2(yy_\beta)=\alpha$, and $\varphi_2(zz_\beta)=\beta$, as $z,z_\beta,y_\beta \notin V(P_x(\beta,\gamma,\varphi_1))$. Since $\gamma \in \overline{\varphi_2}(y)\cap \overline{\varphi_2}(z_\beta)$ and $\gamma' \in \overline{\varphi_2}(x)\cap \overline{\varphi_2}(y_\beta)$, the chain $P_1:=P_x(\gamma,\gamma',\varphi_2)$ is also a path joining $x$ and $y$. Note that $y_\beta, z_\beta \notin V(P_1)$. Interchange $\gamma$ and $\gamma'$ on all edges not on the path $P_1$, and denote the resulting coloring by $\varphi_3$. Then $\gamma \in \overline{\varphi_3}(y)\cap \overline{\varphi_3}(y_\beta)$, $\gamma' \in \overline{\varphi_3}(x)\cap \overline{\varphi_3}(z_\beta)$, $\beta \in \overline{\varphi_3}(x)\cap \overline{\varphi_3}(y_\beta)$, $\varphi_3(xz)=\varphi_3(yy_\beta)=\alpha$, and $\varphi_3(zz_\beta)=\beta$. Finally, recolor $yy_\beta$ with $\gamma$ to obtain a coloring $\varphi_4 \in \mathcal{C}^{\Delta}(G-xy)$. Then $\alpha \in \overline{\varphi_4}(y)$, $\beta \in \overline{\varphi_4}(x)$, $\varphi_4(xz)=\alpha$, and $\varphi_4(zz_\beta)=\beta$. Therefore $(y,yx,x,xz,z,zz_\beta,z_\beta)$ is a Kierstead path with respect to $xy$ and $\varphi_4$. By Lemma~\ref{lem:ShortK_Path}$(c)$, we obtain $d_G(x)+d_G(z)+d_G(z_\beta)\ge 2\Delta+2$, and hence $z_\beta \in K(x,z)$, a contradiction. This completes the proof.
\end{proof}

Let $z \in Z$. By the definition of $Z$, there is a coloring $\varphi \in \mathcal{C}^{\Delta}(G - e)$ such that some edge $f \in E_G(x,z)$ satisfies $\varphi(f) \in \overline{\varphi}(y)$. It follows from Lemmas~\ref{lem:degree}$(a)$,~\ref{prop: 25} and Claim~\ref{claim:3} that 
\begin{align*}
\sigma'(x,y) + \sigma'(x,z) &= |E_G(y, K(x,y))| + |E_G(z, K(x,z))| \\ 
&\ge 2|\overline{\varphi}(x)| + |\varphi(x)| - 1 - (\mu_G(xy)-1) - (\mu_G(xz)-1) \\ 
&= 2\Delta - d_G(x) + 2 - \mu_G(xy) -\mu_G(xz)\\
&\ge 2\Delta - d_G(x) + 2 - 2\mu.
\end{align*} 
The proof is complete. \qed

\section*{Concluding Remarks}

In this paper, we propose the conjecture that every $\Delta$-critical graph has average degree at least $(2\Delta+2)/3$. We verify this conjecture for small values of $\Delta$, and also give several lower bounds on the average degree for general $\Delta$. Our approach is based on the discharging method, together with key structural lemmas (notably Lemmas~\ref{lem:degree} and~\ref{lem: sigma}).

Although we obtain a multigraph generalization of Woodall's Lemma, it is not yet strong enough to resolve the conjecture. This suggests that further progress toward the conjecture will likely require substantially strengthening these structural lemmas and refining the discharging method.

\bibliographystyle{plain}
\bibliography{reference.bib} 
\end{document}